\newtheorem{theorem}{Theorem}
\newtheorem{remark}{Remark}
\newtheorem{lemma}{Lemma}
\newtheorem{proposition}{Proposition}
\renewcommand{\epsilon}{\varepsilon}
\DeclareMathOperator{\essup}{ess\,sup}
\DeclareMathOperator{\esinf}{ess\,inf}
\DeclareMathOperator{\var}{var}
\DeclareMathOperator{\osc}{osc}
\def\N{\mathbb{N}}
\def\Z{\mathbb{Z}}
\def\eps{\epsilon}
\title{Almost Sure Invariance Principle for Random Piecewise Expanding  Maps}
 \date{\today}
\begin{document}
 \begin{abstract}
We prove a fiberwise almost sure invariance principle for random piecewise expanding transformations in one and higher dimensions using recent developments on  martingale techniques.
\end{abstract}
 \maketitle
 \begin{center}
\authors{D.\ Dragi\v cevi\' c \footnote{Department of Mathematics, University of Rijeka, Rijeka Croatia and School of Mathematics and Statistics,
University of New South Wales,
Sydney NSW 2052, Australia. E-mail: {\tt \email{ddragicevic@math.uniri.hr}}.},
G.\ Froyland\footnote{School of Mathematics and Statistics,
University of New South Wales,
Sydney NSW 2052, Australia. Email: {\tt \email{G.Froyland@unsw.edu.au }}.},
C.\   Gonz\'alez-Tokman\footnote{School of Mathematics and Physics,
The University of Queensland,
St Lucia QLD 4072,
Australia. E-mail: {\tt \email{cecilia.gt@uq.edu.au}}.},
S.\ Vaienti\footnote{Aix Marseille Universit\'e, CNRS, CPT, UMR 7332, Marseille, France. E-mail: {\tt \email{vaienti@cpt.univ-mrs.fr}}.}}

\end{center}

\section{Introduction}
The objective of this note is to prove the almost sure invariance principle (ASIP) for a large class of random dynamical systems.
The random dynamics is driven by an invertible, measure preserving transformation $\sigma$ of $(\Omega, \mathcal F, \mathbb P)$ called the base transformation.
Trajectories in the phase space $X$ are formed by concatenations $f_\omega^n:=f_{\sigma^{n-1}\omega}\circ \cdots \circ f_{\sigma\omega}\circ f_\omega$ of maps from a family of maps $f_\omega:X\to X$, $\omega\in \Omega$.
For a systematic treatment of these systems we refer to~\cite{LA}.
For sufficiently regular bounded observables $\psi_\omega:X\to \mathbb{R}$, $\omega\in\Omega$, an almost sure invariance principle guarantees that the random variables $\psi_{\sigma^n\omega}\circ f_\omega^n$ can be matched with trajectories of a Brownian motion, with the error negligible compared to the length of the trajectory.
In the present paper, we consider observables defined  on some measure space $(X,m)$ which is endowed with a notion of variation.
In particular, we consider examples where the observables are functions of bounded variation or quasi-H\"older functions on a  compact subset $X$ of $\mathbb{R}^n$.
Our setting is quite similar to that of~\cite{Buzzi}, where the maps $f_\omega$ are called random Lasota--Yorke maps.

In a more  general setting and under suitable assumptions, Kifer proved in \cite{kifer}  central limit theorems (CLT)  and laws of iterated logarithm; we will briefly compare Kifer's assumptions with ours in Remark \ref{R3} below.  In \cite[Remark 2.7]{kifer}, Kifer claimed without proof (see \cite[Remark 4.1]{kifer}) a random functional CLT, i.e. the weak invariance principle (WIP), and also a strong version of the WIP with almost sure convergence, namely the almost sure invariance principle (ASIP), referring to techniques of Philip and Stout \cite{PS}.

Other early works on the ASIP for deterministic dynamical systems go back to Field, Melbourne and T\"or\"ok \cite{FieldMelbourneTorok}, Melbourne and Nicol \cite{MN1, MN2}, and very recently to A. Korepanov \cite{KO2, Korepanov}. Another important contribution to this field, using a different
approach with respect to the aforementioned papers, is S. Gou\"ezel's article [8]. All these papers also deal  with the error term in the convergence of the process. The Gou\"ezel method was used in \cite{ANV} to get the ASIP for the stationary random dynamical systems of {\em annealed} type, in contrast to the {\em quenched} systems which are the object of this paper.

In fact we present here a proof of the ASIP for our class of random transformations,  following a method recently proposed by Cuny and Merl\`evede \cite{CM}. This method is particularly powerful when applied to  non-stationary dynamical systems; it was successfully used in \cite{HNTV} for a large class of {\em sequential} systems with some expanding features and for which  only the CLT was previously  known \cite{CR}. We stress that the $\omega$-fibered random dynamical systems discussed above are also non-stationary since we use $\omega$-dependent sample measures (see below) on the underlying probability space.

Although our method for establishing ASIP follows closely the strategy outlined in~\cite{HNTV}, the results in~\cite{HNTV} deal with a different type of systems to the ones studied in the present paper. In~\cite{HNTV} the authors consider sequential dynamical systems induced from
a sequence of transformations $(T_k)_{k\in \mathbb N}$ which are then composed as:
\begin{equation}\label{CM}
\mathcal T_n:=T_n \circ \ldots \circ T_1, \quad n\in \mathbb N.
\end{equation}
In the present work the concatenations $f_\omega^n$ are driven by the ergodic, measure-preserving transformation $\sigma$ on the base space $(\Omega, \mathcal F, \mathbb P)$: we point out that   no mixing hypotheses
are imposed on $\sigma.$
Our arguments exploit the fact that under the assumptions of our paper, the associated skew product transformation
$\tau$ (see~\eqref{eq:tau}) has a unique absolutely continuous invariant measure $\mu$ (see~\eqref{mu}), while in the context of sequential systems there is no natural notion of invariant distribution even after enlarging the space.
In particular, the probability underlying our random processes will be given by the conditional measure $\mu_{\omega}$ which exhibits the equivariance property, see Section 2.1:  this
will allow us to prove the linear growth of the variance and finally to approach the  $n^{1/4}$ rate for the ASIP error.
These are considerable improvements over corresponding results for sequential systems, where one needs very strong assumptions to ensure the growth of the variance;  see Lemma 7.1 in \cite{HNTV}.


The rate which we obtain by approximating our process with a sum of i.i.d. Gaussian variables (the content of the ASIP) is of order $n^{1/4}$ times a logarithmic correction, which is very close to the $n^{1/4}$ rate for various classes of dynamical systems, not necessarily uniformly expanding or uniformly hyperbolic. For instance the  ASIP with rate $n^{1/4+\epsilon}$ was proved in the scalar and vector cases respectively in  \cite{FieldMelbourneTorok} and \cite{GO}. A scalar ASIP with rate  $n^{1/4}$ times logarithmic corrections is the result in \cite{CM}, which inspired the present work.
It would be interesting to see if it is possible to obtain sharper estimates in the random setting, either following
 the approach of \cite{GO}, and therefore generalizing our results to vector ASIP,  or the techniques of the recent
 work by Korepanov \cite{Korepanov} which, in the context of nonuniformly expanding and nonuniformly hyperbolic transformations with exponential tails, improves the rates to $n^\epsilon$, for any $\epsilon>0$. Recent results which claim "essentially
optimal rates for slowly (polynomially) mixing deterministic dynamical systems,
such as Pomeau-Manneville intermittent maps, with H\"older continuous observables" are given in \cite{CDKM}, which contains also other references on previous results on the ASIP for different circumstances and techniques.

\section{Preliminaries and statement of the main results}

\subsection{Preliminaries}
We introduce in this section the fiber maps and the associated function spaces which we will use to form the random concatenations. We will call them {\em random expanding transformations}, or {\em random Lasota-Yorke maps}. We will refer to and use the general assumptions for these maps as proposed by Buzzi \cite{Buzzi} in order to use his results on quenched decay of correlations. However, we will strengthen a few of those assumptions with the aim  of obtaining limit theorems. Our additional conditions are similar to those called {\em Dec} and {\em Min} in the paper \cite{CR}, where they were used to establish and recover a property akin to quasi-compactness for the composition of transfer operators. \\

Let  $(\Omega, \mathcal{F}, \mathbb P)$ be  a probability space and let  $\sigma :\Omega\rightarrow \Omega$ be an  invertible $\mathbb P$-preserving transformation. We will assume that
$\mathbb P$ is ergodic. Moreover, let $(X, \mathcal B)$ be a measurable space endowed with a probability measure $m$ and a notion of a variation $\var \colon L^1(X, m) \to [0, \infty]$ which satisfies
the following conditions:
\begin{enumerate}
 \item[(V1)] $\var (th)=\lvert t\rvert \var (h)$;
 \item[(V2)] $\var (g+h)\le \var (g)+\var (h)$;
 \item[(V3)] $\lVert h\rVert_\infty \le C_{\var}(\lVert h\rVert_1+\var (h))$ for some constant $1\le C_{\var}<\infty$;
 \item[(V4)] for any $C>0$, the set  $\{h\colon X \to \mathbb R: \lVert h\rVert_1+\var (h) \le C\}$ is $L^1(X,m)$-compact;
 \item[(V5)] $\var(1_X) <\infty$, where $1_X$ denotes the function equal to $1$ on $X$;
 \item[(V6)] $\{h \colon X \to \mathbb R_+: \lVert h\rVert_1=1 \ \text{and} \ \var (h)<\infty\}$ is $L^1(X,m)$-dense in
 $\{h\colon X \to \mathbb R_+: \lVert h\rVert_1=1\}$.
 \item[(V7)] there exists $K_{\rm var}<\infty$ such that
\begin{equation}\label{varp}
 \var(gh)+\|gh\|_1 \le K_{\rm var}(\var(h)+\|h\|_1)(\var(g)+\|g\|_1), \quad \text{for every $g,h\in BV$,}
 \end{equation}
where
\[
BV=BV(X, m)=\{h\in L^1(X, m): \var(h)<\infty\};
\]
 \item[(V8)] for any $g\in L^1(X, m)$ such that $\esinf g>0$, we have $\var(1/g) \le \frac{\var (g)}{(\esinf g)^2}$.
\end{enumerate}
We recall that   $BV$ is a Banach space with respect to the norm
\[
 \lVert h\rVert_{BV}=\var (h)+\lVert h\rVert_1.
 \]
 On several occasions we will also consider the following norm
 \[
  \lVert h\rVert_{var}=\var (h)+\lVert h\rVert_\infty,
 \]
on $BV$ which (although different) is  equivalent to $\lVert \cdot \rVert_{BV}$.

Let  $f_{\omega} \colon  X \to X$, $\omega \in \Omega$ be a collection of mappings  on $X$.
The associated skew product transformation  $\tau \colon  \Omega \times X \to  \Omega \times X$ is defined  by
\begin{equation}
\label{eq:tau}
\tau(\omega, x)=( \sigma \omega, f_{\omega}(x)),
\end{equation}
where from now on we write $\sigma^k \omega$ instead of $\sigma^k(\omega)$ for each $\omega \in \Omega$ and $k\in \Z$.
 Each transformation $f_{\omega}$ induces the corresponding transfer operator $\mathcal L_{\omega}$ acting on $L^1(X,m)$ and  defined by the following duality relation
\begin{equation}\label{duality}
\int_X(\mathcal L_{\omega} \phi)\psi \, dm=\int_X \phi(\psi \circ f_{\omega})\, dm, \quad \phi \in L^1(X, m), \ \psi \in L^\infty(X, m).
\end{equation}
For each $n\in \mathbb N$ and $\omega \in \Omega$, set
\[
f_{\omega}^n=f_{\sigma^{n-1} \omega} \circ \cdots \circ f_{\omega} \quad \text{and} \quad \mathcal L_{\omega}^{(n)}=\mathcal L_{\sigma^{n-1} \omega} \circ \cdots \circ \mathcal L_{\omega}.
\]
We say that the family of maps $f_\omega$, $\omega \in \Omega$ (or the associated family of transfer operators $\mathcal L_\omega$, $\omega \in \Omega$) is \emph{uniformly good} if:
\begin{enumerate}[{(H}1)]
\item The map $(\omega, x)\mapsto (\mathcal L_\omega H(\omega, \cdot))(x)$ is $\mathbb P \times m$-measurable, i.e. measurable on the space $(\Omega \times X, \mathcal F \times \mathcal G)$ for every $\mathbb P\times m$-measurable function $H$
such that $H(\omega, \cdot)\in L^1(X,m)$ for a.e. $\omega \in \Omega$; \label{H1}
 \item There exists $C>0$ such that
 \begin{equation*}
   \|\mathcal L_\omega \phi\|_{BV} \le C\|\phi\|_{BV}
 \end{equation*}
 for $\phi\in BV$ and $\mathbb P$ a.e. $\omega \in \Omega$.\label{ULY}
\item For $\mathbb P$ a.e. $\omega \in \Omega$,
\begin{equation*}
  \sup_{n\ge 0, \lVert \phi\rVert_{BV} \le 1 } \lVert \phi \circ f_{\sigma^n \omega}\rVert_{BV} <\infty.
  \end{equation*}\label{varc}
\item
There exists $N\in \N$ such that for each $a>0$  and any sufficiently large $n\in \mathbb N$, there exists   $c>0$ such that
\begin{equation*}
\esinf  \mathcal L_\omega^{Nn} h\ge c/2 \lVert h\rVert_1, \quad \text{for every $h\in C_a$ and a.e. $\omega \in \Omega$,}
\end{equation*}
where $C_a:=\{ \phi \in BV: \phi\ge 0 \text{ and } \var(\phi)\le a\int \phi\, dm \}.$
\label{Min2}
 \item There exist $K, \lambda >0$ such that
 \begin{equation*}
\|\mathcal L_\omega^{(n)} \phi\|_{BV} \le Ke^{-\lambda n}\|\phi\|_{BV},
\end{equation*}
for $n\ge 0$, $\mathbb P$ a.e. $\omega \in \Omega$ and $\phi \in BV$ such that $\int \phi \, dm =0$.\label{DEC}
\end{enumerate}

\begin{remark}
In Sections \ref{sec:examples1} and \ref{sec:examples2} we provide explicit examples of random dynamical systems that satisfy (H\ref{H1})--(H\ref{DEC}).
Using (H\ref{H1}), (H\ref{ULY}), and (H\ref{DEC}) we can prove the existence of a unique random absolutely continuous invariant measure for $\tau$.
\end{remark}

\begin{proposition}\label{acim}
	Let $f_\omega$, $\omega \in \Omega$ be a uniformly good family of maps on $X$. Then there exist a unique measurable and nonnegative function $h\colon \Omega \times X \to \mathbb R$ with the property that
	$h_\omega:=h(\omega, \cdot) \in BV$, $\int h_\omega \, dm=1$, $\mathcal L_\omega(h_\omega)=h_{\sigma ( \omega)}$ for a.e. $\omega \in \Omega$ and
	\begin{equation}
		\label{h}
		\essup_{\omega\in\Omega} \|h_\omega\|_{BV}<\infty.
	\end{equation}
\end{proposition}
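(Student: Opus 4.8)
\emph{Proof proposal.} The plan is to construct $h$ fiberwise as the $BV$-limit of the normalized iterates $h_\omega^{(n)}:=\mathcal L_{\sigma^{-n}\omega}^{(n)}1_X$. These already have mass one, since each $\mathcal L_\omega$ preserves the $m$-integral, are nonnegative, since each $\mathcal L_\omega$ is a positive operator, and are $\mathbb P\times m$-measurable in $(\omega,x)$ by induction on $n$: from the cocycle identity $h_{\sigma\omega}^{(n)}=\mathcal L_\omega h_\omega^{(n-1)}$, (H\ref{H1}) makes $(\omega,x)\mapsto h_{\sigma\omega}^{(n)}(x)$ measurable, and precomposing with the (measurable) map $(\omega,x)\mapsto(\sigma^{-1}\omega,x)$ gives measurability of $h_\omega^{(n)}$ itself; here one uses that $\sigma^{-1}$ is measurable, which is implicit in the standing hypothesis that $\sigma$ is an invertible $\mathbb P$-preserving transformation. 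The decay estimate (H\ref{DEC}) will drive the convergence and (H\ref{ULY}) will control the iterates; the only genuine obstacle is that (H\ref{ULY}) by itself yields merely $\|\mathcal L_\omega^{(n)}\phi\|_{BV}\le C^n\|\phi\|_{BV}$, so a preliminary uniform-in-$n$ bound on the iterates of $1_X$ must be extracted from (H\ref{ULY}) and (H\ref{DEC}) in combination.

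\emph{Step 1: uniform boundedness of the iterates.} I would first show $R:=\sup_{n\ge0}\essup_{\omega}\|\mathcal L_\omega^{(n)}1_X\|_{BV}<\infty$. Fix $n_0\in\N$ and, for $n>n_0$, write $\mathcal L_\omega^{(n)}1_X=\mathcal L_{\sigma^{n_0}\omega}^{(n-n_0)}1_X+\mathcal L_{\sigma^{n_0}\omega}^{(n-n_0)}\bigl(\mathcal L_\omega^{(n_0)}1_X-1_X\bigr)$. The second term has zero $m$-integral, so by (H\ref{DEC}) its $BV$-norm is at most $Ke^{-\lambda(n-n_0)}\|\mathcal L_\omega^{(n_0)}1_X-1_X\|_{BV}$, which by (H\ref{ULY}) is bounded by the fixed constant $(C^{n_0}+1)\|1_X\|_{BV}$. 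Writing $a_n:=\essup_\omega\|\mathcal L_\omega^{(n)}1_X\|_{BV}$ and using that $\sigma^{n_0}$ preserves $\mathbb P$ (so it does not change the ess sup), this gives $a_n\le a_{n-n_0}+\mathrm{const}\cdot e^{-\lambda(n-n_0)}$; iterating along $n,n-n_0,n-2n_0,\dots$ down to a residue $r<n_0$ leaves a convergent geometric series plus $a_r\le C^{n_0}\|1_X\|_{BV}$ (by (H\ref{ULY}), with $C\ge1$ without loss of generality), hence a bound independent of $n$.

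\emph{Step 2: passing to the limit.} For $m>n$,
\[
h_\omega^{(m)}-h_\omega^{(n)}=\mathcal L_{\sigma^{-n}\omega}^{(n)}\bigl(\mathcal L_{\sigma^{-m}\omega}^{(m-n)}1_X-1_X\bigr),
\]
where the argument has zero $m$-integral and, by Step 1, $BV$-norm at most $R+\|1_X\|_{BV}$, so (H\ref{DEC}) gives $\|h_\omega^{(m)}-h_\omega^{(n)}\|_{BV}\le K(R+\|1_X\|_{BV})e^{-\lambda n}$. Thus $(h_\omega^{(n)})_n$ is $BV$-Cauchy; let $h_\omega\in BV$ be its limit, so $h_\omega\ge0$, $\int h_\omega\,dm=1$, and $\|h_\omega\|_{BV}\le R$, which is \eqref{h}. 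Since the rate is uniform in $\omega$, $h^{(n)}\to h$ in $L^1(\mathbb P\times m)$, so a subsequence converges pointwise and $h$ is $\mathbb P\times m$-measurable. Equivariance follows by letting $n\to\infty$ in $\mathcal L_\omega h_\omega^{(n)}=h_{\sigma\omega}^{(n+1)}$, using continuity of $\mathcal L_\omega$ on $BV$ from (H\ref{ULY}); this holds for a.e. $\omega$ because $\mathbb P$ is $\sigma$-invariant.

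\emph{Step 3: uniqueness.} If $h'$ has the same properties, then $g_\omega:=h_\omega-h'_\omega$ satisfies $\int g_\omega\,dm=0$, $\mathcal L_\omega g_\omega=g_{\sigma\omega}$, and $\essup_\omega\|g_\omega\|_{BV}<\infty$. Iterating the equivariance backwards gives $g_\omega=\mathcal L_{\sigma^{-n}\omega}^{(n)}g_{\sigma^{-n}\omega}$, whence (H\ref{DEC}) forces $\|g_\omega\|_{BV}\le Ke^{-\lambda n}\essup_\omega\|g_\omega\|_{BV}\to0$, so $h=h'$ $\mathbb P\times m$-a.e. The measurability bookkeeping in Step 2 and the uniform bound of Step 1 are the only points requiring care; the rest is a direct consequence of (H\ref{DEC}).
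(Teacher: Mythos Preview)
Your proof is correct and takes a genuinely different route from the paper's. The paper works in the Banach space
\[
Y=\Bigl\{v:\Omega\times X\to\mathbb R:\ v\ \text{measurable},\ v_\omega\in BV,\ \essup_\omega\|v_\omega\|_{BV}<\infty\Bigr\}
\]
with the $\essup$-norm, defines $(\mathcal Lv)_\omega=\mathcal L_{\sigma^{-1}\omega}v_{\sigma^{-1}\omega}$ on the closed subset $Y_1$ of nonnegative, mass-one elements, and uses (H\ref{DEC}) to show that $\mathcal L^{n_0}$ is a contraction on $Y_1$ once $Ke^{-\lambda n_0}<1$; the unique fixed point $\tilde h$ of $\mathcal L^{n_0}$ is then Ces\`aro-averaged as $h_\omega=\tfrac{1}{n_0}\sum_{j=0}^{n_0-1}\mathcal L_{\sigma^{-j}\omega}^{(j)}\tilde h_{\sigma^{-j}\omega}$ to obtain a fixed point of $\mathcal L$ itself. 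Uniqueness is immediate since any equivariant $h$ is fixed by $\mathcal L^{n_0}$.

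By contrast, you build $h_\omega$ constructively as the $BV$-limit of $h_\omega^{(n)}=\mathcal L_{\sigma^{-n}\omega}^{(n)}1_X$, proving directly that this sequence is Cauchy. The price you pay is Step~1, the uniform bound $R=\sup_n\essup_\omega\|\mathcal L_\omega^{(n)}1_X\|_{BV}<\infty$, which the paper gets for free from the contraction-mapping machinery (incidentally, your Step~1 argument works already with $n_0=1$, so the parameter $n_0$ there is superfluous). What you gain is an explicit formula for the density and the avoidance of the averaging step; you also handle the $\mathbb P\times m$-measurability of the limit more carefully than the paper, which sweeps it into the definition of $Y$. Both arguments rest entirely on (H\ref{ULY}) and (H\ref{DEC}), and your uniqueness proof (iterating equivariance backward and applying (H\ref{DEC})) is essentially the same as the paper's, just unpacked from the fixed-point language.
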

\
\begin{proof}
	Let
	\[
	Y=\bigg{\{} v\colon \Omega \times X \to \mathbb R: \text{$v$ measurable,} \  v_\omega:=v(\omega, \cdot) \in BV \ \text{and} \ \essup_{\omega \in \Omega} \lVert v_\omega\rVert_{BV}<\infty \bigg{\}}.
	\]
	Then, $Y$ is a Banach space with respect to the norm
	\[
	\lVert v\rVert' :=\essup_{\omega \in \Omega} \lVert v_\omega\rVert_{BV}.
	\]
	Moreover, let $Y_1$ be the set of all $v\in Y$ such that $\int v_\omega\, dm=1$ and $v_\omega \ge 0$ for a.e. $\omega \in \Omega$. It is easy to verify that $Y_1$ is a closed subset of $Y$ and thus a
	complete metric space. We define a map $\mathcal L \colon Y_1 \to Y_1$ by
	\[
	(\mathcal L(v))_\omega=\mathcal L_{\sigma^{-1} \omega}v_{\sigma^{-1} \omega}, \quad  \text{for $\omega \in \Omega$ and  $\ v\in Y_1$.}
	\]
The operator  $\mathcal L_\omega$ was defined in~\eqref{duality};
	note that it follows from (H\ref{ULY}) that
	\[
	\begin{split}
	\lVert \mathcal L(v)\rVert'=\essup_{\omega \in \Omega} \lVert  (\mathcal L(v))_\omega \rVert_{BV}&\le C\essup_{\omega \in \Omega} \lVert v_{\sigma^{-1} \omega}\rVert_{BV} =C\lVert v\rVert'.
	\end{split}
	\]
	Furthermore,
	\[
	\int (\mathcal L(v))_\omega\, dm=\int \mathcal L_{\sigma^{-1} \omega}v_{\sigma^{-1} \omega}\, dm=\int v_{\sigma^{-1}\omega}\, dm=1,
	\]
	for a.e. $\omega \in \Omega$.  Finally, since $v_\omega \ge 0$ for a.e. $\omega \in \Omega$ we have (using the positivity of operators $\mathcal L_\omega$) that $\mathcal L_{\sigma^{-1} \omega}v_{\sigma^{-1} \omega} \ge 0$ for
a.e. $\omega \in \Omega$.
Hence, we conclude that $\mathcal L$ is well-defined.  Similarly,
	\[
	\lVert \mathcal L(v)-\mathcal L(w)\rVert' \le C\lVert v-w\rVert', \quad \text{for $v, w\in Y_1$}
	\]
	which shows that $\mathcal L$ is continuous.
	Choose $n_0\in \mathbb  N$ such that $Ke^{-\lambda n_0}<1$. Take arbitrary $v, w\in Y_1$ and note that by~(H\ref{DEC}),
	\[
	\begin{split}
	\lVert \mathcal L^{n_0}(v) &-\mathcal L^{n_0}(w)\rVert' =\essup_{\omega \in \Omega}\lVert \mathcal L_{\sigma^{-n_0}\omega}^{(n_0)}(v_{\sigma^{-n_0}\omega}-w_{\sigma^{-n_0} \omega})\rVert_{BV}
	\\
	&\le Ke^{-\lambda n_0} \essup_{\omega \in \Omega} \lVert v_{\sigma^{-n_0}\omega}-w_{\sigma^{-n_0} \omega}\rVert_{BV}  =Ke^{-\lambda n_0} \lVert v-w\rVert'.
	\end{split}
	\]
	Hence, $\mathcal L^{n_0}$ is a contraction on $Y_1$ and thus has a unique fixed point $\tilde h\in Y_1$. Set
	\[
	h_\omega:=\frac{1}{n_0}\tilde h_\omega+ \frac{1}{n_0} \mathcal L_{\sigma^{-1} \omega} (\tilde h_{\sigma^{-1} \omega})+\ldots + \frac{1}{n_0} \mathcal L_{\sigma^{-(n_0-1)}\omega}^{(n_0-1)} (\tilde h_{\sigma^{-(n_0-1)}\omega})\quad  \text{for $\omega \in \Omega$,}
	\]
and consider the map $h\colon \Omega \times X \to \mathbb R$ given by $h(\omega, \cdot)=h_\omega$ for $\omega \in \Omega$.
	Then, $h$ is measurable, nonnegative, $\int h_\omega \, dm=1$ and a simple computation yields $\mathcal L_\omega(h_\omega)=h_{\sigma  \omega}$. Finally, by~(H\ref{ULY}) we have that
	\[
	\essup_{\omega \in \Omega} \lVert h_\omega \rVert_{BV} \le \frac{C^{n_0}-1}{n_0(C-1)}\essup_{\omega \in \Omega} \lVert  \tilde h_\omega \rVert_{BV}<\infty.
	\]
	Thus, we have established existence of $h$. The uniqueness is obvious since each $h$ satisfying the assertion of the theorem is a fixed point of $\mathcal L$ and thus also of $\mathcal L^{n_0}$ which implies that it must be unique.
\end{proof}
At this stage, we point out that we will need (H\ref{DEC}) for our later results;  we use (H\ref{DEC}) in the proof of Proposition \ref{acim} only to give a simpler existence result for the random ACIM.
With weaker control on the properties of $f_\omega$, \cite{Buzzi00,Buzzi} proved the above existence result;
in particular, these results do not require~(H\ref{DEC}) and (H\ref{ULY}) is allowed to hold with $C=C(\omega)$ such that $\log C \in L^1(\mathbb P)$.

We define a probability measure $\mu$ on $\Omega \times X$ by
\begin{equation}\label{mu}
\mu(A \times B)=\int_{A\times B} h \, d(\mathbb P\times m), \quad \text{for $A\in \mathcal F$ and $B\in \mathcal B$.}
\end{equation}
Then, it follows from Proposition~\ref{acim} that $\mu$ is invariant with respect to $\tau$.
Furthermore, $\mu$ is obviously absolutely continuous with respect to $\mathbb P \times m$.
Finally, it follows from the uniqueness in Proposition~\ref{acim} that $\mu$ is the only measure with these properties.

Let us now consider for any $\omega \in  \Omega$ the measures $\mu_{\omega}$ on the measurable space $(X, \mathcal B)$, defined by $d\mu_{\omega}=h_{\omega}dm.$
We recall here two important properties of these measures, which are together equivalent to (\ref{mu}) and its invariance, see \cite{LA}.
First, the so-called {\em equivariant property}: $f^*_{\omega}\mu_{\omega}=\mu_{\sigma \omega}$.
Second, the disintegration of $\mu$ on the marginal $\mathbb{P};$ if $A$ is any measurable set in $\mathcal F \times  \mathcal B,$ and $A_{\omega}=\{x; (\omega, x)\in A\},$ the {\em section} at $\omega,$ then $\mu(A)=\int \mu_{\omega}(A_{\omega}) d\mathbb{P}(\omega).$
The conditional (or sample) measure $\mu_{\omega}$ will constitute the probability underlying our random processes.

%
%
%

We now describe a large class of examples of good families of maps $f_\omega$, $\omega \in \Omega$.
We first show that they satisfy properties (H\ref{H1})--(H\ref{varc}); this will crucially depend on the choice of the function space.
 We then give additional requirements in order for those maps to satisfy a condition related to (H\ref{Min2}), named {\em Min} when applied to sequential systems in \cite{CR}, and condition (H\ref{DEC}), called {\em Dec} in \cite{CR}.

    \subsection{Example 1: Random Lasota-Yorke maps}
\label{sec:examples1}
Take $X=[0, 1]$, a Borel $\sigma$-algebra $\mathcal B$ on $[0, 1]$ and the Lebesgue measure $m$ on $[0, 1]$. Furthermore, let
 \[
  \var (g)=\inf_{h=g (mod \ m)}\sup_{0=s_0<s_1<\ldots <s_n=1}\sum_{k=1}^n \lvert h(s_k)-h(s_{k-1})\rvert.
 \]
It is well known that $\var$ satisfies properties (V1)--(V8) with $C_{\var},K_{\var}=1$.
For a piecewise $C^2$ $f:[0,1]\to [0,1]$, set $\delta (f)=\esinf_{x\in [0,1]} \lvert f'\rvert$ and let $N(f)$ denote the number of intervals of monoticity of $f$.
Consider now a measurable map $\omega\mapsto f_\omega$, $\omega \in \Omega$  of piecewise $C^2$ maps on $[0, 1]$ satisfying (H1) such that
\begin{equation}\label{sd}
 N:=\sup_{\omega \in \Omega} N(f_\omega)<\infty, \  \delta:=\inf_{\omega \in \Omega} \delta (f_\omega)>1, \ \mbox{ and } D:=\sup_{\omega \in \Omega}|f''_\omega|_\infty<\infty.
\end{equation}

It is proved in~\cite{Buzzi} that the family $f_\omega$, $\omega \in \Omega$ satisfies~(H2)  with
\begin{equation}\label{K}
 C=4\bigg{(} \frac{N}{\delta} \vee 1\bigg{)}\bigg{(} \frac{D}{\delta^2}\vee 1\bigg{)}\bigg{(}\frac{1}{\delta}\vee 1\bigg{)},
\end{equation}
where for any two real-valued functions $g_1$ and $g_2$, $g_1\vee g_2=\max \{g_1, g_2\}$, and (V8) has been used for the bound $\var(1/f')\leq \frac{D}{\delta^2}$.
We note that since $N<\infty$, condition (H\ref{varc}) holds.

We now discuss conditions that imply~(H\ref{Min2}).
For each $\omega \in \Omega$, let $b_\omega$ be the number of branches of $f_\omega$, so that there are essentially disjoint sub-intervals $J_{\omega,1}, \dots, J_{\omega, b_\omega}\subset I$, with $\cup_{k=1}^{b_{\omega}} J_{\omega, k}=I$, so that $f_\omega|_{J_{\omega,k}}$ is $C^2$ for each $1\leq k \leq b_\omega$. The minimal such partition $\mathcal{P}_\omega:=\{ J_{\omega,1}, \dots, J_{\omega,b_\omega} \}$ is called the \textit{regularity partition} for $f_\omega$.
We recall from classical results, e.g. \cite{LasotaYorke}, that whenever $\delta>2$, and $\esinf_{\omega \in \Omega} \min_{1\leq k \leq b_\omega} m(J_{\omega,k})>0$, there exist  $\alpha \in (0, 1)$ and $K>0$  such that
\begin{equation*}
 \var (\mathcal L_\omega \phi)\le \alpha \var(\phi)+K \lVert \phi\rVert_1, \quad \text{for $ \phi\in BV$ and a.e.  $\omega \in \Omega$.}
\end{equation*}

More generally, when $\delta<2$, one can take an iterate $N \in \N$ so that $\delta^N>2$. If the regularity partitions $\mathcal{P}^N_\omega:=\{ J^N_{1,\omega}, \dots, J^N_{\omega,b_\omega^{(N)}} \}$ corresponding to the maps $f_\omega^{(N)}$ also satisfy
$\esinf_{\omega \in \Omega} \min_{1\leq k \leq b^{(N)}_\omega} m(J^N_{\omega,k})>0$, then
 there exist $\alpha^N \in (0, 1)$ and $K^N>0$  such that
\begin{equation}\label{RLY}
 \var (\mathcal L_\omega^N \phi)\le \alpha^N \var(\phi)+K^N \lVert \phi\rVert_1, \quad \text{for $ \phi\in BV$ and a.e.  $\omega \in \Omega$.}
\end{equation}

We will from now on assume that \eqref{RLY} holds for some $N\in \N$. Iterating, it is easy to show that
\begin{equation}\label{LY}
 \var (\mathcal L_\omega^{RN} \phi) \le (\alpha^N)^R \var (\phi)+C^N \lVert \phi \rVert_1, \quad \phi\in BV, \quad \omega \in \Omega, \quad R\in \mathbb N,
\end{equation}
where $C^N=\frac{K^N}{1-\alpha^N}$.
The proof of the following lemmas are deferred to Sections \ref{sec:app1} and \ref{sec:app2}, respectively.
\begin{lemma}
\label{lem:min2}
Suppose the following uniform covering condition holds:
\begin{equation}\label{eq:UnifCov}
\text{For every subinterval } J\subset I, \exists k= k(J) \text{ s.t. for a.e. }  \omega \in \Omega, f_\omega^{(k)}(J) = I.
\end{equation}
Then, (H\ref{Min2}) holds.
\end{lemma}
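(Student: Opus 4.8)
The plan is to deduce (H\ref{Min2}) --- with $N:=1$ --- from two ingredients: a ``spreading'' estimate, by which every density in the cone $C_a$ is bounded below on one of finitely many fixed subintervals, and the observation that the uniform covering hypothesis \eqref{eq:UnifCov} already forces each $f_\omega$ to be onto $[0,1]$ for a.e.\ $\omega$, so that the covering times become thresholds rather than exact values. By homogeneity of the cone and linearity of $\mathcal L_\omega$ we may assume $\|h\|_1=1$. We also record that under the standing hypotheses of Section~\ref{sec:examples1} (in particular the uniform lower bound $\ell_{\min}:=\esinf_{\omega}\min_{1\le k\le b_\omega} m(J_{\omega,k})>0$ that makes \eqref{RLY} available), the mean value theorem together with \eqref{sd} yields $\Lambda:=\sup_{\omega\in\Omega}\|f'_\omega\|_\infty\le \ell_{\min}^{-1}+D<\infty$, whence $|(f_\omega^{(n)})'|\le \Lambda^{n}$ for all $n$ and a.e.\ $\omega$.

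\textit{Spreading.} First I would prove: there is $M=M(a)\in\N$ such that, writing $I_1,\dots,I_M$ for the $M$ subintervals of $[0,1]$ of equal length, every $h\in C_a$ with $\|h\|_1=1$ satisfies $h\ge \tfrac14$ a.e.\ on some $I_j$. Indeed $\|h\|_\infty\le 1+a$ by (V3); superadditivity of the $[0,1]$--variation over $\{I_j\}$ gives $\sum_{j}\var_{I_j}(h)\le \var(h)\le a$, so at most $4a$ of the $I_j$ have $\var_{I_j}(h)>\tfrac14$, and the mass of $h$ on their union is at most $(1+a)\cdot 4a/M\le\tfrac12$ as soon as $M\ge 8a(1+a)$; hence some ``good'' interval $I_{j_0}$ (with $\var_{I_{j_0}}(h)\le\tfrac14$) carries mass $\ge \tfrac{1}{2M}$, i.e.\ has average of $h$ at least $\tfrac12$, and since $\osc_{I_{j_0}}(h)\le \var_{I_{j_0}}(h)\le\tfrac14$, we conclude $h\ge \tfrac14$ a.e.\ on $I_{j_0}$.

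\textit{Surjectivity and conclusion.} Applying \eqref{eq:UnifCov} with $J=[0,1]$ gives $k_I:=k([0,1])$ with $f_\omega^{(k_I)}([0,1])=[0,1]$ for a.e.\ $\omega$; writing $f_\omega^{(k_I)}=f_{\sigma^{k_I-1}\omega}\circ(\cdots)$ and using that $f_{\sigma^{k_I-1}\omega}$ maps into $[0,1]$, this is possible only if $f_{\sigma^{k_I-1}\omega}([0,1])=[0,1]$, i.e.\ $f_{\sigma^{k_I-1}\omega}$ is onto, for a.e.\ $\omega$; invariance of $\mathbb P$ under $\sigma$ then shows $f_\omega$ is onto $[0,1]$ for a.e.\ $\omega$. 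Consequently $f_\omega^{(n)}$ is onto for every $n\ge 1$ and a.e.\ $\omega$, and therefore, for every subinterval $J$, one has $f_\omega^{(n)}(J)=[0,1]$ for \emph{every} $n\ge k(J)$ and a.e.\ $\omega$ (compose the surjections performed after step $k(J)$ with $f_\omega^{(k(J))}(J)=[0,1]$). Now take $N:=1$ and, for a given $a$, let $n\ge k^*(a):=\max_{1\le j\le M(a)}k(I_j)$. Given $h\in C_a$ with $\|h\|_1=1$, choose $I_{j_0}$ as in the spreading step; since $f_\omega^{(n)}(I_{j_0})=[0,1]$ a.e., almost every $x$ has a preimage $y\in I_{j_0}$ under $f_\omega^{(n)}$, and using $h\ge \tfrac14\,1_{I_{j_0}}$ a.e.\ together with $|(f_\omega^{(n)})'(y)|\le\Lambda^{n}$ we get $\mathcal L_\omega^{(n)}h\ge \tfrac14\,\mathcal L_\omega^{(n)}(1_{I_{j_0}})\ge (4\Lambda^{n})^{-1}$ a.e.\ on $[0,1]$. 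Undoing the normalization and setting $c:=(2\Lambda^{n})^{-1}>0$ gives $\esinf \mathcal L_\omega^{(n)}h\ge \tfrac{c}{2}\|h\|_1$, which is (H\ref{Min2}) (with $N=1$). Alternatively, once Proposition~\ref{acim} and (H\ref{DEC}) are in force, the same spreading/surjectivity argument applied to the invariant densities $h_\omega$ themselves gives $\esinf_{\omega}\esinf h_\omega\ge (4\Lambda^{k^*})^{-1}>0$, whence (H\ref{DEC}) upgrades the conclusion to a constant $c$ independent of $n$.

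The step I expect to be the crux is the surjectivity-from-covering observation: without the upgrade from ``$f_\omega^{(k(J))}(J)=[0,1]$ for a single $k(J)$'' to ``for all $n\ge k(J)$'', the finitely many covering times $k(I_1),\dots,k(I_{M(a)})$ need not be compatible with a single iterate count $Nn$, and fixing a uniform $N$ would be delicate. The rest --- the elementary counting in the spreading step, measurability of the relevant $\omega$--sets, the a.e.\ caveats in the Perron--Frobenius identity, and $\Lambda<\infty$ --- is routine.
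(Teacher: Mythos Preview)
Your proof is correct and covers the same ground as the paper's, but with a somewhat different organisation. Both arguments first show that any normalised $h\in C_a$ is bounded below on one of finitely many fixed subintervals, and then use the covering hypothesis together with the pointwise transfer-operator formula to push this lower bound to all of $I$. The paper's spreading step is a little slicker (a direct contradiction using $\lceil 2a\rceil$ equal subintervals yields $\esinf_{I_j}\phi\ge \tfrac12$ on some $I_j$), but your counting with $M\ge 8a(1+a)$ intervals is equally valid. The two substantive differences are: (i) you make explicit that \eqref{eq:UnifCov} applied to $J=I$ forces each $f_\omega$ to be onto for a.e.\ $\omega$, which is precisely what is needed to upgrade ``$f_\omega^{(k(J))}(J)=I$'' to ``$f_\omega^{(n)}(J)=I$ for all $n\ge k(J)$'' --- the paper passes over this step without comment when it sets $k=\max_j k(I_j)$, so your argument is in fact more complete here; (ii) the paper invokes the cone contraction of Lemma~\ref{lem:ConeContraction} (a consequence of~\eqref{LY}) to iterate and produce a single constant $c$ valid for all large $n$, whereas you bypass the Lasota--Yorke machinery entirely, take $N=1$, and accept a constant $c=c(n)\sim\Lambda^{-n}$ that shrinks with $n$. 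Since (H\ref{Min2}) as stated allows $c$ to depend on $n$, and its downstream uses (Lemmas~\ref{LG} and~\ref{xc}) each fix a single $n$, your weaker conclusion suffices; the paper's route buys the uniform-in-$n$ bound at the cost of the extra cone-contraction lemma.
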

\begin{lemma}
\label{lem:dec}
There exist $K, \lambda>0$ such that~(H\ref{DEC}) holds.
\end{lemma}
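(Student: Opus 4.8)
The decay estimate (H\ref{DEC}) is, in essence, Buzzi's theorem on exponential decay of correlations for random Lasota--Yorke maps \cite{Buzzi}; the point here is to get it with a prefactor uniform in $\omega$, so the plan is to run the standard Birkhoff-cone argument for non-autonomous Lasota--Yorke operators while tracking that every constant depends only on the uniform data $N,\delta,D$ of \eqref{sd}, on the constants $\alpha^N,C^N$ of \eqref{LY}, and on the constant $c$ of (H\ref{Min2}) (available by Lemma~\ref{lem:min2} under \eqref{eq:UnifCov}), but never on $\omega$.

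First, from the iterated Lasota--Yorke inequality \eqref{LY} together with mass preservation $\int\mathcal L_\omega^{(n)}\phi\,dm=\int\phi\,dm$, I would fix $a>0$ and $R_1\in\N$ so that, for every $\omega$, $\mathcal L_\omega^{R_1N}$ maps the cone $C_a$ of (H\ref{Min2}) into a strictly narrower subcone $C_{a'}$, $a'<a$. Next (H\ref{Min2}) supplies $n_2\in\N$ and $c>0$ with $\esinf \mathcal L_\omega^{Nn_2}g\ge (c/2)\|g\|_1$ for all $g\in C_a$ and a.e.\ $\omega$. Setting $m_0:=N(R_1+n_2)$, each element $g$ of $\mathcal L_\omega^{(m_0)}(C_a)$ then satisfies simultaneously $\var(g)\le a'\|g\|_1$ and $\esinf g\ge (c/2)\|g\|_1$; via (V3) this bounds the oscillation $\|g\|_\infty/\esinf g$ uniformly, hence bounds the Hilbert-metric diameter of $\mathcal L_\omega^{(m_0)}(C_a)$ inside $C_a$ by a uniform $\Delta<\infty$. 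By Birkhoff's theorem $\mathcal L_\omega^{(m_0)}$ contracts the Hilbert metric on $C_a$ by a uniform factor $<1$, and the standard comparison between that metric and the $L^1$-norm on $C_a$ yields $\|\mathcal L_\omega^{(n)}(g_1-g_2)\|_1\le C_1 e^{-\lambda_0 n}$ for all mass-one $g_1,g_2\in C_a$, a.e.\ $\omega$ and $n\ge 0$, where we may freely take $\lambda_0\le\tfrac1N\log(1/\alpha^N)$.

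To pass to a general $\phi\in BV$ with $\int\phi\,dm=0$, split $\phi=\phi_+-\phi_-$, so $\phi_\pm\ge 0$ and $\int\phi_\pm\,dm=\tfrac12\|\phi\|_1=:\rho$ (if $\rho=0$ then $\phi\equiv 0$). By \eqref{LY} the normalized densities $\phi_\pm/\rho$ enter $C_a$ after a number of iterates $k=k(\phi)\le A+B\log_+\big(\var(\phi)/\rho\big)$, with $A,B$ uniform and $B=N/\log(1/\alpha^N)$; applying the $L^1$-contraction above beyond time $k$, bounding the first $k$ iterates trivially by $\|\mathcal L_\omega^{(n)}\phi\|_1\le\|\phi\|_1$, and using $\lambda_0\le\tfrac1N\log(1/\alpha^N)$ to absorb $e^{\lambda_0 k}$ against $\var(\phi)/\rho$, one checks --- since $\|\phi\|_1,\var(\phi)\le\|\phi\|_{BV}$ --- that $\|\mathcal L_\omega^{(n)}\phi\|_1\le K_1 e^{-\lambda_1 n}\|\phi\|_{BV}$ for uniform $K_1,\lambda_1>0$. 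The upgrade from $L^1$ to $BV$ is routine: writing $\mathcal L_\omega^{(n)}\phi=\mathcal L_{\sigma^{\lfloor n/2\rfloor}\omega}^{(n-\lfloor n/2\rfloor)}\psi$ with $\psi:=\mathcal L_\omega^{(\lfloor n/2\rfloor)}\phi$, one has $\|\psi\|_{BV}\le\tilde C\|\phi\|_{BV}$ by \eqref{LY} (with (H\ref{ULY}) and \eqref{K} absorbing remainders modulo $N$) and $\|\psi\|_1\le K_1 e^{-\lambda_1 n/2}\|\phi\|_{BV}$ by the previous step, and then \eqref{LY} applied to the outer composition controls $\var(\mathcal L_\omega^{(n)}\phi)$ by a contracting multiple of $\|\psi\|_{BV}$ plus a constant times $\|\psi\|_1$, while $\|\mathcal L_\omega^{(n)}\phi\|_1\le\|\psi\|_1$; both are exponentially small at a uniform rate, giving (H\ref{DEC}).

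The crux is the uniformity in the cone step: one must bound the Hilbert diameter $\Delta$ of $\mathcal L_\omega^{(m_0)}(C_a)$, and hence the contraction factor, independently of $\omega$. This is exactly where the $\omega$-independence of the constants in \eqref{sd}, \eqref{LY} and in (H\ref{Min2}) is indispensable; without it one only recovers Buzzi's bound with an $\omega$-dependent prefactor. Everything else is the routine non-autonomous transcription of the deterministic Birkhoff-cone argument, the only real bookkeeping being the passage from compositions of length a multiple of $N$ to arbitrary lengths via (H\ref{ULY}).
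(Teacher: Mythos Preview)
Your argument is correct and follows the standard direct Birkhoff-cone route: fix a cone $C_a$, use \eqref{LY} and (H\ref{Min2}) to get a uniform finite Hilbert diameter for $\mathcal L_\omega^{(m_0)}(C_a)$, hence a uniform contraction rate; then for a general mean-zero $\phi$ control the cone-entry time $k(\phi)$ of $\phi_\pm/\rho$ by $A+B\log_+(\var(\phi)/\rho)$ and absorb $e^{\lambda_0 k}$ into $\|\phi\|_{BV}$ via the choice $\lambda_0\le \tfrac1N\log(1/\alpha^N)$; finally upgrade $L^1$ to $BV$ by the split-in-half trick combined with \eqref{LY}. All constants are visibly $\omega$-free.

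The paper proceeds differently at the key step of obtaining uniformity in $\phi$. It first proves a $\phi$-dependent estimate (its Lemma~\ref{lem:19}): for each fixed $\phi\in BV^0$ one chooses $a=a(\phi)$ so that $\phi^\pm\in C_a$ from the start, and then the cone contraction gives $\|\mathcal L_\omega^{(n)}\phi\|_{BV}\le K(\phi)e^{-\lambda(\phi)n}\|\phi\|_{BV}$ uniformly in $\omega$ but with constants depending on $\phi$. To remove this dependence it invokes the \emph{uniform boundedness principle}: the operators $T(\omega,n)\colon BV^0\to \ell^1$, $\phi\mapsto(\mathcal L_\omega\phi,\ldots,\mathcal L_\omega^{(n)}\phi,0,\ldots)$ are pointwise bounded by the previous step, hence uniformly bounded by some $L$; this gives $\sum_{k\ge1}\|\mathcal L_\omega^{(k)}\phi\|_{BV}\le L\|\phi\|_{BV}$, from which $\|\mathcal L_\omega^{(n)}\phi\|_{BV}\le L^2 n^{-1}\|\phi\|_{BV}$ and then exponential decay follow by a standard submultiplicativity argument. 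Your route is more hands-on and yields explicit constants, at the price of the bookkeeping around $k(\phi)$ and the $L^1$-to-$BV$ upgrade; the paper's route is softer and shorter once the $\phi$-dependent estimate is in hand, but is non-constructive and relies on completeness of $BV^0$.
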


\subsection{Example 2: Random piecewise expanding maps}
\label{sec:examples2}
In higher dimensions, the properties (V1)-(V8) can be checked for the so-called quasi-H\"older space $\mathcal{B}_\beta$, which in particular is injected in $L^{\infty}$ (condition (V3)) and has the algebra property (V7). Originally developed by Keller \cite{keller85} for one-dimensional dynamics, we refer the reader to the Saussol paper \cite{Saussol} for a detailed presentation of that space in higher dimensions, as well as for the proof of its main properties. In particular, using the same notations as  in~\cite{Saussol}, we use the following notion of variation:
\[
 \var_{\beta}(\phi)=\sup_{0<\epsilon \le \epsilon_0}\epsilon^{-\beta}\int_{\mathbb R^n} \osc (\phi, B_\epsilon (x))\, dx,
\]
where $\epsilon_0>0$ is sufficiently small,
\[
 \osc (\phi, B_{\epsilon}(x))=\essup_{B_\epsilon(x)}\phi-\esinf_{B_\epsilon(x)}\phi,
\]
and we define the norm (we use the notation introduced in Section 2.1),
$
||\phi||_{BV}:=\var_{\beta}(\phi)+||\phi||_1,
$ which makes the set $\{\phi\in L^1(m)| \var_{\beta}(\phi)<\infty\},$ a Banach space $\mathcal{B}_{\beta}.$
In~\cite{Saussol} it is proved that this notion of variation satisfies (V1)--(V3) and (V5)--(V7) and noted that (V4) is proven in \cite{keller85} (Theorem 1.13).
We  prove here  that~(V8)  holds too. Observe that
 \[
  \begin{split}
 \osc (1/\phi, B_\epsilon (x)) &=\essup_  {B_\epsilon(x)}(1/\phi)-\esinf_{B_\epsilon(x)}(1/\phi) \\
 &=1/\esinf_{B_\epsilon(x)}(\phi)-1/\essup_{B_\epsilon(x)}(\phi)  \\
 &=\frac{\essup_{B_\epsilon(x)}(\phi)-\esinf_{B_\epsilon(x)}(\phi)}{(\essup_{B_\epsilon(x)}(\phi))(\esinf_{B_\epsilon(x)}(\phi))} \\
 &\le \frac{\osc (\phi, B_\epsilon (x))}{(\esinf_{B_\epsilon(x)}(\phi))^2}
 \le \frac{\osc (\phi, B_\epsilon (x))}{(\esinf \phi)^2},
  \end{split}
\]
which readily implies that $\var_\beta(1/\phi) \le \frac{\var_\beta (\phi)}{(\esinf \phi)^2}$.\\

We now describe the family of maps which we will endow later with a uniformly good structure;  this class has been considered in \cite[\S2]{Saussol} and \cite[Definition 2.9]{GOT}.
Let $M$ be a compact subset of $\mathbb{R}^N$
which is the closure of its non-empty interior. We take a map $f:M\to M$
and let $\mathcal{A}=\{A_i\}_{i=1}^m$ be a finite family of disjoint open sets such
that the Lebesgue measure of $M\setminus\bigcup_{i}A_i$ is zero, and there
exist open sets $\tilde{A_i}\supset\overline{A_i}$ and $C^{1+\gamma}$ maps
$f_i: \tilde{A_i}\to \mathbb{R}^N$, for some real number $0<\gamma\leq 1$
and some sufficiently small real number $\eps_1>0$ such that
\begin{enumerate}
\item $f_i(\tilde{A_i})\supset B_{\eps_1}(f(A_i))$ for each $i$, where
  $B_{\eps}(V)$ denotes a neighborhood of size $\eps$ of the set $V.$ The
  maps $f_i$ are the local extensions of $f$ to the $\tilde{A_i}.$

\item there exists a constant $C_1$ so that for each $i$ and $x,y\in f(A_i)$ with
$\mbox{dist}(x,y)\leq\eps_1$,
$$
|\det Df_i^{-1}(x)-\det Df_i^{-1}(y)|\leq C_1|\det Df_i^{-1}(x)|\mbox{dist}(x,y)^\gamma;
$$

\item there exists $s=s(f)<1$ such that $\forall x,y\in f(\tilde{A}_i) \textrm{ with } \mbox{dist}(x,y)\leq\eps_1$, we have
$$\mbox{dist}(f_i^{-1}x,f_i^{-1}y)\leq s\, \mbox{dist}(x,y);$$

\item each $\partial A_i$ is a codimension-one embedded compact
  piecewise $C^1$ submanifold and
  \begin{equation}\label{sc}s^\gamma+\frac{4s}{1-s}Z(f)\frac{\Gamma_{N-1}}{\Gamma_N}<1.\end{equation}
  where $Z(f)=\sup\limits_{x}\sum\limits_i \#\{\textrm{smooth pieces
    intersecting } \partial A_i \textrm{ containing }x\}$ and $\Gamma_N$ is
  the volume of the unit ball in $\mathbb{R}^N$ \footnote{Condition (\ref{sc}) can be considerably weakened, see \cite{Saussol}, but its right statement requires additional definitions; for smooth boundaries assumption (\ref{sc}) is perfectly adapted to our purposes.}.
\end{enumerate}
We now consider a family of maps $\{f_\omega\}_{\omega\in\Omega}$ satisfying the above conditions for a.e.\ $\omega\in\Omega$, and with uniform constants $\epsilon_1, C_1, \gamma, s,$ and $Z:=\essup_{\omega\in\Omega} Z(f_\omega)$.
The hypotheses (H\ref{H1})--(H\ref{varc}) follow as in the one-dimensional case, provided the function $\omega \mapsto f_\omega$ is measurable.
In order to satisfy hypotheses (H\ref{Min2}) and (H\ref{DEC}), as in the one-dimensional case we impose further conditions on the above class of maps.
A Lasota-Yorke inequality
 is guaranteed for each map $f_\omega$ by \cite[Lemma 4.1]{Saussol} or \cite[Proposition 3.1]{GOT}, and this ensures the  following uniform Lasota-Yorke inequality holds, for some $0<\alpha<1, K>0$,
 \begin{equation}
\label{LYmulti}
 \var_\beta (\mathcal L_\omega \phi)\le \alpha \var_\beta(\phi)+K \lVert \phi\rVert_1, \quad \text{for $ \phi\in \mathcal{B}_\beta$ and a.e.  $\omega \in \Omega$.}
 \end{equation}
As in the one-dimensional case, we can obtain an $N$-fold Lasota-Yorke inequality (of the type (\ref{RLY}) using $\var_\beta$ in place of $\var$) provided that the regularity partitions of a.e.\ $N$-fold composition of maps have positive Lebesgue measure. 
The proof of the following lemma is deferred until Appendix \ref{sect:appB}.

   \begin{lemma}\label{RPM}
    Suppose the uniform covering condition holds for the above class of maps:
     $$\mbox{for any open set $J \in M$, there exists $k=k(J)$ such that for a.e. $\omega\in \Omega$, $f^k_{\omega}(J)=M.$}$$
      Then conditions (H\ref{Min2}) and (H\ref{DEC}) are satisfied.
   \end{lemma}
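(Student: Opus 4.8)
The plan is to reduce the higher-dimensional statement to the same two-step argument used for Lemmas \ref{lem:min2} and \ref{lem:dec} in the one-dimensional case, replacing intervals by open subsets of $M$ and $\var$ by $\var_\beta$ throughout. For condition (H\ref{Min2}), I would first establish that under the uniform covering hypothesis there is, for each fixed $a>0$ and each sufficiently large $n$, a single open set $U\subset M$ and a single time $k$ such that every $h\in C_a$ satisfies $\mathcal L_\omega^{k}h\ge \text{(const)}\cdot\|h\|_1 \cdot 1_U$ off a null set, uniformly in $\omega$. The key observation is that membership in $C_a$ forces a quantitative lower bound on the measure of $\{h\ge \theta\int h\,dm\}$ via (V3) and the definition of $\var_\beta$: if $\var_\beta(h)\le a\int h\,dm$ then, by (V3) applied to $h$ and a Chebyshev-type estimate, a definite-size ball $B$ on which $h\ge c_a\int h\,dm$ can be located. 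Then cover $\overline B$ — or rather apply the uniform covering condition to an open set $J$ of controlled size contained in the superlevel set — to get $f_\omega^{k}(J)=M$ for a.e. $\omega$, and push the lower bound forward through the transfer operator using the distortion bounds (conditions (2) and (3) of the Saussol class, which are uniform in $\omega$). One subtlety is that the superlevel set of an $L^1$ function need not contain an open set of definite size; I would handle this using (V6) together with the Saussol regularity of elements of $\mathcal B_\beta$, or alternatively first apply one step of $\mathcal L_\omega$ (which by \eqref{LYmulti} and (V3) puts us in a class of functions with controlled $\essup$) so that the superlevel set genuinely contains a ball. Finally, pass from the fixed covering time $k$ to the multiple $Nn$ required in (H\ref{Min2}) by iterating the uniform Lasota--Yorke inequality \eqref{LY} (in the $\var_\beta$ form) to re-enter the cone $C_a$ after time $k$ and absorbing the remaining iterates, exactly as in the proof of Lemma \ref{lem:min2}.

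For condition (H\ref{DEC}), the plan is the standard cone-contraction / quasi-compactness argument adapted to the random setting. Having \eqref{LYmulti}, hence the iterated inequality \eqref{LY} in the $\var_\beta$ norm, together with (H\ref{Min2}) just proved, I would show that the fibered transfer operators contract a suitable Birkhoff-type cone, or more directly argue as follows: (H\ref{Min2}) gives a uniform lower bound $\essup_\omega$-a.e. on $\mathcal L_\omega^{(Nn_0)}h$ for $h$ in the invariant cone, and combined with the uniform upper Lasota--Yorke bound this yields a uniform spectral-gap-type estimate $\|\mathcal L_\omega^{(Nn_0)}\phi\|_{\var_\beta}\le \rho\|\phi\|_{\var_\beta}$ with $\rho<1$ for mean-zero $\phi$, for a.e.\ $\omega$ and a fixed large $n_0$. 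Writing an arbitrary $\phi\in\mathcal B_\beta$ with $\int\phi\,dm=0$ and iterating, decomposing any $n$ as $qNn_0+r$, the constants $K,\lambda$ in (H\ref{DEC}) then come from $\rho$, $Nn_0$, and the uniform bound $C$ from \eqref{LYmulti} controlling the remainder $r<Nn_0$; crucially all constants are $\omega$-independent because the Saussol-class constants $\epsilon_1,C_1,\gamma,s,Z$ and the branch bound $\essup_\omega b_\omega$ are uniform by hypothesis. This step is essentially identical in structure to the proof of Lemma \ref{lem:dec}, with the one-dimensional variation replaced by $\var_\beta$ and the one-dimensional distortion estimates replaced by conditions (2)--(3) of the Saussol class.

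The main obstacle I anticipate is the first step of the (H\ref{Min2}) argument: converting the cone condition $\var_\beta(h)\le a\int h\,dm$ into the existence of an open set of a size that is uniform in $h$ (and $\omega$) on which $h$ is bounded below by a definite multiple of $\int h\,dm$, so that the uniform covering hypothesis can be invoked with a single time $k=k(U)$. In one dimension this is straightforward because a BV function with small variation relative to its integral cannot be small on too large a set; in higher dimensions one must work with $\osc(\,\cdot\,,B_\epsilon(x))$ and the Saussol space geometry, and care is needed that the resulting ball has radius bounded away from zero over the whole cone $C_a$. I expect this can be pushed through by the "apply one step of $\mathcal L_\omega$ first" device together with the uniform bounds already in hand, but it is the point where the higher-dimensional argument genuinely diverges from the one-dimensional template and where the details require the most attention. (The flagged question in the preceding paragraph of the paper — whether positive Lebesgue measure of the $N$-fold regularity partitions is the only extra requirement for the $N$-fold Lasota--Yorke inequality — is tangential to this lemma, since we take \eqref{LYmulti}, hence \eqref{LY}, as given.)
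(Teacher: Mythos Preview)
Your plan is essentially the paper's: reduce both conditions to the one-dimensional template of Lemmas~\ref{lem:min2} and~\ref{lem:dec}, replacing $\var$ by $\var_\beta$, and run the Hilbert-metric cone-contraction argument (Lemmas~\ref{lem:17}--\ref{lem:19}, for which the paper cites the multidimensional analogues in~\cite{GOT}) followed by the uniform-boundedness-principle step of Lemma~\ref{lem:dec} verbatim.

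The one place where your plan diverges is exactly the obstacle you flag: extracting from $\var_\beta(\phi)\le a\int\phi\,dm$ an open set of \emph{uniform} size on which $\phi\ge\frac12\int\phi\,dm$. Neither of your proposed devices is needed (and neither is clearly sufficient as stated---(V6) is a density statement with no obvious bearing here, and applying one step of $\mathcal L_\omega$ gives an $L^\infty$ bound but still only a measurable, not open, superlevel set). The paper's Lemma~\ref{preRPM} resolves this directly from the definition of $\var_\beta$: partition $M$ into cubes $B_k$ of diameter $\epsilon'<\min\{\epsilon_0,1/(2a)\}$; if every cube had $\esinf_{B_k}\phi<\frac12$, then since $B_k\subset B_{\epsilon'}(x)$ for $x\in B_k$ one gets
\[
1=\int\phi\,dm\le\sum_k m(B_k)\esinf_{B_k}\phi+\int_M\osc(\phi,B_{\epsilon'}(x))\,dm(x)<\tfrac12+\epsilon'\var_\beta(\phi)\le\tfrac12+\epsilon' a<1,
\]
a contradiction. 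This is the exact higher-dimensional analogue of the interval-partition step in Lemma~\ref{lem:min2}, and the size of the cube depends only on $a$ and $\epsilon_0$, so the covering time $k$ is uniform over $C_a$ as required.
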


\subsection{Further properties of the random ACIM}

Let $\mu_\omega$ be, as above,  the measure on $X$ given by  $d\mu_\omega=h_\omega dm$ for $\omega \in \Omega$. We have the following important consequence of~(H\ref{DEC}), which establishes the
appropriate decay of correlations result that will be used later on.
\begin{lemma}\label{lem:buzzi}
 There exists $K>0$ and $\rho \in (0, 1)$  such that
 \begin{equation}\label{buzzi}
 \bigg{\lvert} \int \mathcal L_\omega^{(n)}(\phi h_\omega)\psi\, dm -\int \phi \, d\mu_\omega \cdot \int \psi \, d\mu_{\sigma^n \omega} \bigg{\rvert} \le K\rho^n
 \lVert \psi\rVert_\infty \cdot \lVert \phi \rVert_{var} ,
\end{equation}
for $n\ge 0$, $\psi \in L^\infty (X, m)$ and $\phi \in BV(X, m)$.
\end{lemma}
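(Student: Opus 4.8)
The plan is to reduce \eqref{buzzi} to the exponential decay in \eqref{DEC} for zero-mean observables by a standard splitting. First I would write $\phi = \int \phi\, d\mu_\omega \cdot 1_X + \phi_0$, where $\phi_0 := \phi - \left(\int \phi\, d\mu_\omega\right) 1_X$; note $\phi_0 \in BV$ by (V5) and $\var(1_X)<\infty$. Multiplying by $h_\omega$ and applying $\mathcal{L}_\omega^{(n)}$, equivariance $\mathcal{L}_\omega^{(n)} h_\omega = h_{\sigma^n(\omega)}$ (Proposition~\ref{acim}) gives
\[
\mathcal{L}_\omega^{(n)}(\phi h_\omega) = \left(\int \phi\, d\mu_\omega\right) h_{\sigma^n(\omega)} + \mathcal{L}_\omega^{(n)}(\phi_0 h_\omega).
\]
Integrating against $\psi$, the first term yields exactly $\int \phi\, d\mu_\omega \cdot \int \psi\, d\mu_{\sigma^n(\omega)}$, so the left-hand side of \eqref{buzzi} equals $\left| \int \mathcal{L}_\omega^{(n)}(\phi_0 h_\omega)\, \psi\, dm \right|$. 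Bounding this by $\|\psi\|_\infty \, \|\mathcal{L}_\omega^{(n)}(\phi_0 h_\omega)\|_1$, it remains to control $\|\mathcal{L}_\omega^{(n)}(\phi_0 h_\omega)\|_1$.

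The key point is that $\phi_0 h_\omega$ has zero $m$-integral: $\int \phi_0 h_\omega\, dm = \int \phi\, h_\omega\, dm - \int \phi\, d\mu_\omega \int h_\omega\, dm = \int \phi\, d\mu_\omega - \int \phi\, d\mu_\omega = 0$, using $\int h_\omega\, dm = 1$ and $d\mu_\omega = h_\omega\, dm$. Hence (H\ref{DEC}) applies to $\phi_0 h_\omega$:
\[
\|\mathcal{L}_\omega^{(n)}(\phi_0 h_\omega)\|_1 \le \|\mathcal{L}_\omega^{(n)}(\phi_0 h_\omega)\|_{BV} \le K e^{-\lambda n} \|\phi_0 h_\omega\|_{BV}.
\]
Finally I would estimate $\|\phi_0 h_\omega\|_{BV}$ in terms of $\|\phi\|_{var}$. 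Using the algebra property (V7) (or (V2), (V1), (V5) and the $L^\infty$ bound on $h_\omega$), $\|\phi_0 h_\omega\|_{BV} \le K_{\var}\|\phi_0\|_{BV}\|h_\omega\|_{BV}$, and $\|\phi_0\|_{BV} = \var(\phi) + \|\phi_0\|_1 \le \var(\phi) + 2\|\phi\|_\infty \le 2\|\phi\|_{var}$ (since $\left|\int\phi\,d\mu_\omega\right| \le \|\phi\|_\infty$ and $\var(1_X)$ contributes, so one should be slightly careful and absorb $\var(1_X)$ into the constant). Combined with the uniform bound $\essup_\omega \|h_\omega\|_{BV} < \infty$ from \eqref{h}, this gives $\|\phi_0 h_\omega\|_{BV} \le C' \|\phi\|_{var}$ for a constant $C'$ independent of $\omega$. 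Setting $\rho = e^{-\lambda}$ and enlarging $K$ appropriately yields \eqref{buzzi}.

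The main obstacle is really just bookkeeping on the constants: one must check that the constant in \eqref{buzzi} does not depend on $\omega$, which hinges precisely on the uniformity in \eqref{h} and on the fact that $K, \lambda$ in (H\ref{DEC}) are $\omega$-independent; the equivalence of $\|\cdot\|_{BV}$ and $\|\cdot\|_{var}$ (noted in the preliminaries) is used to pass between the two norms. No genuinely hard estimate is needed beyond what is already assumed.
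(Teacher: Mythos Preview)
Your proof is correct and follows essentially the same approach as the paper: reduce to a zero-mean observable, apply (H\ref{DEC}), and control the $BV$ norm via~\eqref{varp} and~\eqref{h}. The only cosmetic difference is that the paper splits into the cases $\int\phi\,d\mu_\omega=0$ and $\int\phi\,d\mu_\omega\neq 0$ (introducing in the latter case a normalized density $\Phi$ with $\phi h_\omega=(\int\phi h_\omega\,dm)\Phi$), whereas your centering $\phi_0=\phi-\int\phi\,d\mu_\omega$ handles both cases at once and is arguably cleaner.
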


\begin{proof}
We consider two cases. Assume first that $\int\phi \, d\mu_\omega=\int \phi h_\omega \, dm=0$. Then, it follows from~(H\ref{DEC}) that
\[
 \begin{split}
  \bigg{\lvert} \int \mathcal L_\omega^{(n)}&(\phi h_\omega)\psi\, dm -\int \phi \, d\mu_\omega \cdot \int \psi \, d\mu_{\sigma^n \omega} \bigg{\rvert}
  =\bigg{\lvert} \int \mathcal L_\omega^{(n)}(\phi h_\omega)\psi\, dm  \bigg{\rvert} \\
  &\le \lVert \psi \rVert_\infty \cdot \lVert \mathcal L_\omega^{(n)}(\phi h_\omega)\rVert_1
  \le \lVert \psi \rVert_\infty \cdot \lVert \mathcal L_\omega^{(n)}(\phi h_\omega)\rVert_{BV}
  \le Ke^{-\lambda n} \lVert \phi h_\omega\rVert_{BV} \cdot \lVert \psi \rVert_\infty,
 \end{split}
\]
and thus~\eqref{buzzi} follows from~\eqref{varp} and~\eqref{h}. Now we consider the case when $\int\phi \, d\mu_\omega \neq 0$. We have
\begin{align*}
  &  \bigg{\lvert} \int \mathcal L_\omega^{(n)}(\phi h_\omega)\psi\, dm -\int \phi \, d\mu_\omega \cdot \int \psi \, d\mu_{\sigma^n \omega} \bigg{\rvert} \displaybreak[0] \\
  &= \bigg{\lvert}\int  \mathcal L_\omega^{(n)}(\phi h_\omega)\psi\, dm -\int \phi h_\omega \, dm  \cdot \int \psi h_{\sigma^n \omega} \, dm \bigg{\rvert} \displaybreak[0] \\
  &\le \lVert \psi \rVert_\infty \cdot
   \int \bigg{\lvert} \bigg{(}\mathcal L_\omega^{(n)}(\phi h_\omega) -\bigg{(}\int \phi h_\omega \, dm \bigg{)}h_{\sigma^n \omega}  \bigg{)}\bigg{\rvert}\, dm  \displaybreak[0] \\
  &=\lVert \psi \rVert_\infty \cdot \bigg{\lvert} \int \phi h_\omega \, dm \bigg{\rvert} \cdot \int  \bigg{\lvert}\mathcal L_\omega^n (\Phi-h_\omega)\bigg{\rvert}\, dm \\
  &\le \lVert \psi \rVert_\infty \cdot \bigg{\lvert} \int \phi h_\omega \, dm \bigg{\rvert} \cdot \lVert \mathcal L_\omega^n (\Phi-h_\omega)\rVert_{BV},
\end{align*}
where
\[
 \phi h_\omega=\bigg{(}\int \phi h_\omega\, dm \bigg{)}\Phi.
\]
Note that $\int (\Phi-h_\omega)\, dm=0$ and thus using~(H\ref{DEC}),
\begin{align*}
  \lVert \psi \rVert_\infty \cdot \bigg{\lvert} \int \phi h_\omega \, dm \bigg{\rvert} \cdot \lVert \mathcal L_\omega^{(n)} (\Phi-h_\omega)\rVert_{BV}\displaybreak[0]
  &\le Ke^{-\lambda n}\lVert \psi\rVert_\infty \cdot \bigg{\lvert} \int \phi h_\omega \, dm \bigg{\rvert} \cdot \lVert \Phi-h_\omega\rVert_{BV}\displaybreak[0] \\
  &\le Ke^{-\lambda n}\lVert \psi\rVert_\infty \cdot \bigg{\lVert} \bigg{(}\phi-\int  \phi h_\omega \, dm \bigg{)} h_\omega \bigg{\rVert}_{BV}.
 \end{align*}
Hence, it follows from~\eqref{varp} and~\eqref{h} that
\[
 \bigg{\lvert} \int \mathcal L_\omega^{(n)}(\phi h_\omega)\psi\, dm -\int \phi \, d\mu_\omega \cdot \int \psi \, d\mu_{\sigma^n \omega} \bigg{\rvert} \le K'e^{-\lambda n}
 \lVert \psi \rVert_\infty \cdot \lVert \phi \rVert_{BV}
\]
for some $K'>0$ and thus~\eqref{buzzi} follows readily by recalling that  $\lVert \cdot \rVert_{BV}\le \lVert \cdot \rVert_{var}$.

\end{proof}
\begin{remark} We would like to emphasize that~\eqref{buzzi} is a special case of a more general decay of correlation result obtained in~\cite{Buzzi} which does not require~(H\ref{DEC}) and yields~\eqref{buzzi}\
but with $K=K(\omega)$.
\end{remark}
Finally, we prove that condition~(H\ref{Min2}) implies that we have a  uniform lower bound for $h_\omega$.
\begin{lemma}\label{LG}
We have
\begin{equation}
\label{h2}
\esinf h_\omega \ge c/2, \quad \text{for a.e.  $\omega \in \Omega$.}
\end{equation}
\end{lemma}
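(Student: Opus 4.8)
The plan is to combine the equivariance relation $\mathcal{L}_\omega h_\omega = h_{\sigma\omega}$ coming from Proposition~\ref{acim} with the quantitative lower bound in~(H\ref{Min2}), once we have checked that the densities $h_\omega$ all lie in a single cone $C_a$ to which that hypothesis applies.

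First I would set $a := \essup_{\omega \in \Omega} \var(h_\omega)$, which is finite by~\eqref{h}. Since $h_\omega \ge 0$ and $\int h_\omega \, dm = 1$ for a.e.\ $\omega$, we have $\var(h_\omega) \le a = a \int h_\omega \, dm$, so $h_\omega \in C_a$ for a.e.\ $\omega \in \Omega$; moreover $\|h_\omega\|_1 = 1$.

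Next I would invoke~(H\ref{Min2}) with this value of $a$: there is some $N \in \mathbb{N}$, and, fixing one sufficiently large $n \in \mathbb{N}$, a constant $c > 0$ such that $\esinf \mathcal{L}_\omega^{(Nn)} h \ge (c/2)\|h\|_1$ for every $h \in C_a$ and a.e.\ $\omega$. Applying this to $h = h_\omega$, and using that iterating $\mathcal{L}_\omega h_\omega = h_{\sigma\omega}$ gives $\mathcal{L}_\omega^{(Nn)} h_\omega = h_{\sigma^{Nn}\omega}$, we obtain $\esinf h_{\sigma^{Nn}\omega} \ge c/2$ for a.e.\ $\omega \in \Omega$. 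Finally, since $\sigma$ is an invertible $\mathbb{P}$-preserving transformation, so is $\sigma^{Nn}$; hence the set $\{\omega : \esinf h_\omega \ge c/2\}$ has the same (full) $\mathbb{P}$-measure as its preimage under $\sigma^{Nn}$, which proves~\eqref{h2}.

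I do not expect a genuine obstacle here; the only points needing a little care are verifying the membership $h_\omega \in C_a$ uniformly in $\omega$ (so that the single constant $c$ from~(H\ref{Min2}) is valid for a.e.\ $\omega$) and correctly transporting the almost-everywhere statement along $\sigma^{Nn}$ using the invertibility and $\mathbb{P}$-invariance of $\sigma$.
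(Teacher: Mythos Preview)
Your argument is correct and follows essentially the same route as the paper: use the uniform bound~\eqref{h} to place all $h_\omega$ in a single cone $C_a$, apply~(H\ref{Min2}) together with the equivariance $\mathcal L_\omega^{(Nn)} h_\omega = h_{\sigma^{Nn}\omega}$, and then transport the resulting a.e.\ lower bound along the measure-preserving bijection $\sigma^{Nn}$. The paper's proof is more terse but identical in substance; your version simply makes explicit the choice of $a$, the equivariance step, and the final measure-theoretic transfer.
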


\begin{proof}
We first note that it follows from Proposition~\ref{acim} that there exists $a>0$ such that $h_\omega \in C_a$ for a.e. $\omega \in \Omega$. Hence, it follows from~\eqref{Min2} applied to $h=h_\omega$  that for $n$ large,
\[
\esinf h_{\sigma^{Nn} \omega} \ge c/2 \quad \text{for a.e. $\omega \in \Omega$,}
\]
which implies the desired conclusion.
\end{proof}


\begin{remark}\label{R3}
We now briefly compare our setting, results and assumptions with those in~\cite{kifer}.  In the latter,  the  space $X$ is replaced by  a foliation $\{ \Xi_{\omega} \}_{\omega \in \Omega}$.
On the fibered subset $\bf{\Xi} :=\{ (\omega, \xi) : \omega \in \Omega, \xi \in \Xi_\omega  \}$ one can consider  the skew map $\tau(\omega, \xi)=(\sigma\omega,f_{\omega}\xi)$ with the associated {\em fiber maps} $f_{\omega}:\Xi_{\omega}\rightarrow \Xi_{\sigma\omega}$. In our situation  the $\Xi_{\omega}$'s for all $\omega$ coincide with the set  $X$ and all $f_{\omega}: X\rightarrow X$ are endomorphisms of $X$ with some regularity property.
Since this situation covers the applications we have in mind (random composition of maps),
we do not treat the case of $\omega$-dependent fibers, but in principle, all the arguments in the present paper also  extend to this more general setting.
Kifer used a martingale approximation, where the martingale approximation error in \cite{kifer} is given in terms of an infinite series (see the error $g_{\omega}$ in equation (4.18) in \cite{kifer}), which appears difficult to estimate under general assumptions. Instead, our martingale approximation error term is explicitly given in terms of a finite sum (see (\ref{marterror})), and, as mentioned above, it   can be bound without difficulty in our setting.   Furthermore, Kifer  invoked a rate of mixing, but to deal with it he assumed  strong  conditions ($\phi$-mixing and $\alpha$-mixing), which are difficult to check on concrete examples of dynamical systems. We use instead quenched decay of correlations on a space of regular observables, for example, bounded variation or quasi-H\"older and $L^{\infty}$ functions (exponential decay was shown by Buzzi \cite{Buzzi}), with an addition:
the constant that scales the norm of the observable in the decay rate is independent of the noise $\omega$; we can then satisfy the hypotheses of Sprindzuk's result, see below. 


\end{remark}
\subsection{Statement of the main result}
We are now ready to state our main result. We will consider an observable $\psi \colon \Omega \times X \to \mathbb R$. Let $\psi_\omega=\psi (\omega, \cdot )$, $\omega \in \Omega$ and assume that
\begin{equation}\label{obs}
 \sup_{\omega \in \Omega} \lVert \psi_\omega \rVert_{BV} <\infty.
\end{equation}
We also introduce the centered observable
\[
\tilde \psi_\omega=\psi_\omega-\int \psi_\omega \, d\mu_\omega, \quad \omega \in \Omega
\]
and  consider the associated Birkhoff sum $\sum_{k=0}^{n-1}\tilde \psi_{\sigma^k \omega} \circ f_\omega^k,$ and the variance \begin{equation}\label{VARI}
\tau_n^2=\mathbb{E}_{\omega}\left(\sum_{k=0}^{n-1}\tilde \psi_{\sigma^k \omega} \circ f_\omega^k\right)^2.
\end{equation}
The almost sure invariance principle
is a matching of the trajectories of the dynamical system with a Brownian motion in
such a way that the error is negligible in comparison with the Birkhoff sum. Other limit
theorems such as the central limit theorem, the functional central limit theorem and
the law of the iterated logarithm  will  be  consequences (see~\cite{PS}) of our proof of the ASIP and  therefore they will hold for random Lasota-Yorke maps.

\begin{theorem}\label{main}
Let us consider the family of uniformly good random Lasota-Yorke maps. Then there exists $\Sigma^2\ge0$ such that $\lim_{n\rightarrow \infty}\frac1n \tau_n^2=\Sigma^2.$\\
Moreover one of the following cases hold:\\
(i)  $\Sigma=0$, and this is equivalent to the existence of
 $\phi \in L^2 (\Omega \times X)$ such that (co-boundary condition)
 \begin{equation}\label{cohom}
  \tilde \psi=\phi-\phi \circ \tau.
  \end{equation}
  (ii) $\Sigma^2>0$ and in this case for $\mathbb P$-a.e. $\omega \in \Omega,$ $\forall \epsilon>\frac54,$  by enlarging the  probability space $(X, \mathcal B, \mu_\omega)$ if necessary, it is possible to find a sequence $(Z_k)_k$ of independent centered (i.e. of zero mean) Gaussian random variables
such that
 \begin{equation}\label{sca}
 \sup_{1\le k\le n}\bigg{\lvert} \sum_{i=1}^k (\tilde \psi_{\sigma^i\omega} \circ f_\omega^i)-\sum_{i=1}^k Z_i \bigg{\rvert}=O(n^{1/4}\log^{\epsilon}(n)), \quad m-a.s.
\end{equation}
\end{theorem}
\begin{remark}
We notice that the statement (ii) of the  Theorem 1 also holds $\mu_{\omega}$-a.s. and for $\mathbb{P}$-a.e. $\omega$, since the measures $\mu_{\omega}$ and $m$ are equivalent by Lemma \ref{LG}.
\end{remark}
\section{Reverse martingale difference construction}
In this section we construct the reverse martingale difference and establish various useful estimates that will play an important  role in the rest of the paper. Indeed, the proof of our main result (Theorem~\ref{main}) will be obtained
as a consequence of the recent result by Cuny and Merlev\`{e}de~\cite{CM}  applied to our reverse martingale difference.

For  $\omega \in \Omega$  and $k\in \mathbb N$, let
\[
\mathcal{T}_{\omega}^k:=(f^k_{\omega})^{-1}(\mathcal B).
\]
Furthermore, for a measurable map   $\phi \colon X \to \mathbb R$  and a $\sigma$-algebra $\mathcal H$ on $X$, $\mathbb E_\omega (\phi \rvert \mathcal H)$ will denote the conditional expectation of $\phi$
with respect to $\mathcal H$ and the measure $\mu_\omega$.
 We begin with  the following technical lemma.

\begin{lemma}\label{lem:CE}
We have
\begin{equation}\label{CE}
\mathbb E_{\omega} (\phi \circ f_\omega^l \rvert \mathcal{T}_{\omega}^n)=\bigg{(}  \frac{\mathcal L_{\sigma^{l} \omega}^{(n-l)} (h_{\sigma^l \omega} \phi )}{h_{\sigma^n \omega}} \bigg{)} \circ f_\omega^n,
\end{equation}
for each $\omega \in \Omega$ and $0\le l\le n$.
\end{lemma}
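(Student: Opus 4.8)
The plan is to verify the identity \eqref{CE} directly from the defining duality property of the transfer operators and the characterization of conditional expectation with respect to $\mu_\omega$. Recall that a measurable function $g$ is (a version of) $\mathbb E_\omega(\phi\circ f_\omega^l\mid\mathcal T_\omega^n)$ if and only if $g$ is $\mathcal T_\omega^n$-measurable and $\int_X g\cdot(u\circ f_\omega^n)\,d\mu_\omega=\int_X(\phi\circ f_\omega^l)\cdot(u\circ f_\omega^n)\,d\mu_\omega$ for every bounded measurable $u\colon X\to\mathbb R$; here I am using that $\mathcal T_\omega^n=(f_\omega^n)^{-1}(\mathcal B)$, so a function is $\mathcal T_\omega^n$-measurable precisely when it has the form $u\circ f_\omega^n$ for some measurable $u$. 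The right-hand side of \eqref{CE} is manifestly of this form, with $u=\mathcal L_{\sigma^l\omega}^{(n-l)}(h_{\sigma^l\omega}\phi)/h_{\sigma^n\omega}$, which is well-defined since $\esinf h_{\sigma^n\omega}>0$ by Lemma~\ref{LG} (this is where that lemma, hence (H\ref{Min2}), enters).

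The main computation is then to check the integral identity. First I would rewrite everything against Lebesgue measure $m$ using $d\mu_\omega=h_\omega\,dm$. For the left-hand side, with $g$ equal to the claimed RHS,
\[
\int_X g\cdot(u\circ f_\omega^n)\,d\mu_\omega=\int_X \Big(\frac{\mathcal L_{\sigma^l\omega}^{(n-l)}(h_{\sigma^l\omega}\phi)}{h_{\sigma^n\omega}}\circ f_\omega^n\Big)(u\circ f_\omega^n)\,h_\omega\,dm.
\]
Since $h_{\sigma^n\omega}\circ f_\omega^n$ and the ratio combine cleanly, and using $\mathcal L_\omega^{(n)} h_\omega=h_{\sigma^n\omega}$ (equivariance from Proposition~\ref{acim}), I would apply the duality relation for $\mathcal L_\omega^{(n)}$ in the form $\int (\mathcal L_\omega^{(n)}\varphi)\cdot v\,dm=\int \varphi\cdot(v\circ f_\omega^n)\,dm$ to move the outer $f_\omega^n$ inside, obtaining $\int \mathcal L_{\sigma^l\omega}^{(n-l)}(h_{\sigma^l\omega}\phi)\cdot u\,dm$. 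For the right-hand side,
\[
\int_X(\phi\circ f_\omega^l)(u\circ f_\omega^n)\,d\mu_\omega=\int_X(\phi\circ f_\omega^l)(u\circ f_\omega^n)h_\omega\,dm,
\]
and here I would split $f_\omega^n=f_{\sigma^l\omega}^{(n-l)}\circ f_\omega^l$, apply duality for $\mathcal L_\omega^{(l)}$ together with $\mathcal L_\omega^{(l)}h_\omega=h_{\sigma^l\omega}$ to reduce to $\int_X \phi\cdot(u\circ f_{\sigma^l\omega}^{(n-l)})\,h_{\sigma^l\omega}\,dm$, and then apply duality for $\mathcal L_{\sigma^l\omega}^{(n-l)}$ to get $\int_X \mathcal L_{\sigma^l\omega}^{(n-l)}(h_{\sigma^l\omega}\phi)\cdot u\,dm$. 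The two sides agree, which proves the claim. One should note the boundary cases $l=0$ and $l=n$ are consistent: for $l=n$ the operator $\mathcal L_{\sigma^n\omega}^{(0)}$ is the identity and the identity reduces to $\mathbb E_\omega(\phi\circ f_\omega^n\mid\mathcal T_\omega^n)=\phi\circ f_\omega^n$, which is trivially true.

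I do not anticipate a serious obstacle here; the identity is essentially bookkeeping with the transfer-operator duality and the equivariance relation $\mathcal L_\omega^{(l)}h_\omega=h_{\sigma^l\omega}$. The one point that requires a little care — and the only place a genuine hypothesis is used — is the division by $h_{\sigma^n\omega}$, which needs the almost-sure lower bound \eqref{h2} from Lemma~\ref{LG} so that the quotient is a bona fide bounded measurable function and all the manipulations stay within $L^1$; I would state this explicitly at the start of the proof. A secondary, purely expository, point is to be careful with the ``test function'' characterization of conditional expectation: it suffices to test against indicators of sets in $\mathcal T_\omega^n$, equivalently against $u\circ f_\omega^n$ for bounded $u$, and one should remark that every $\mathcal T_\omega^n$-measurable function arises this way so that the verification is complete.
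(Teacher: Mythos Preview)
Your proposal is correct and follows essentially the same route as the paper: verify $\mathcal T_\omega^n$-measurability of the right-hand side, then check the defining integral identity for conditional expectation using the transfer-operator duality and the equivariance relation (the paper tests against indicators $\mathbf 1_B\circ f_\omega^n$ and phrases equivariance as $(f_\omega^l)_*\mu_\omega=\mu_{\sigma^l\omega}$, whereas you test against bounded $u\circ f_\omega^n$ and use $\mathcal L_\omega^{(l)}h_\omega=h_{\sigma^l\omega}$, which is the same content). Your explicit remark that Lemma~\ref{LG} is needed to make the quotient $\mathcal L_{\sigma^l\omega}^{(n-l)}(h_{\sigma^l\omega}\phi)/h_{\sigma^n\omega}$ well-defined is a point the paper leaves implicit.
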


\begin{proof}
We note that the right-hand side of~\eqref{CE} is measurable with respect to $\mathcal{T}_{\omega}^n$.  Take now an arbitrary $A \in  \mathcal{T}_{\omega}^n$ and write it in the form
$A=(f_\omega^n)^{-1}( B)$ for some $B\in \mathcal B$.  We have
\[
\begin{split}
\int_A \phi\circ f_\omega^l \, d\mu_\omega &=\int_X (\phi \circ f_\omega^l) \mathbf{1}_A \,  d\mu_\omega \\
&= \int_X(\phi \circ f_\omega^l) \cdot (\mathbf{1}_B \circ f_\omega^n)\, d\mu_\omega
=\int_X \phi (\mathbf{1}_B \circ f_{\sigma^l \omega}^{n-l}) \, d\mu_{\sigma^l \omega} \\
&=\int_X h_{\sigma^l \omega} \phi (\mathbf{1}_B \circ f_{\sigma^l \omega}^{n-l}) \, dm
= \int_X \mathcal L_{\sigma^{l} \omega}^{(n-l)} (h_{\sigma^l \omega} \phi ) \mathbf{1}_B \, dm \\
&= \int_X \frac{\mathcal L_{\sigma^{l} \omega}^{(n-l)} (h_{\sigma^l \omega} \phi )}{h_{\sigma^n \omega}} \mathbf{1}_B \, d\mu_{\sigma^n \omega}
=\int_X \bigg{[} \bigg{(}  \frac{\mathcal L_{\sigma^{l} \omega}^{(n-l)} (h_{\sigma^l \omega} \phi )}{h_{\sigma^n \omega}} \bigg{)} \circ f_\omega^n \bigg{]} (\mathbf{1}_B \circ f_\omega^n)\, d \mu_\omega \\
&=\int_X \bigg{[} \bigg{(}  \frac{\mathcal L_{\sigma^{l} \omega}^{(n-l)} (h_{\sigma^l \omega} \phi)}{h_{\sigma^n \omega}} \bigg{)} \circ f_\omega^n \bigg{]} \mathbf{1}_A\, d \mu_\omega
=\int_{A} \bigg{(}  \frac{\mathcal L_{\sigma^{l} \omega}^{(n-l)} (h_{\sigma^l \omega} \phi )}{h_{\sigma^n \omega}} \bigg{)} \circ f_\omega^n \, d \mu_\omega,
\end{split}
\]
which proves~\eqref{CE}.
\end{proof}
We now return to the observable $\psi_{\omega}$ introduced in (\ref{obs}) and its centered companion $\tilde \psi_\omega=\psi_\omega-\int \psi_\omega \, d\mu_\omega, \quad \omega \in \Omega.$

Set
\begin{equation}\label{MD}
M_n=\tilde \psi_{\sigma^n \omega}+G_n-G_{n+1}\circ f_{\sigma^n \omega}, \quad n\ge 0,
\end{equation}
where $G_0=0$ and
\begin{equation}\label{rec}
G_{k+1}=\frac{\mathcal L_{\sigma^k \omega}(\tilde \psi_{\sigma^k \omega} h_{\sigma^k \omega} +G_k h_{\sigma^k \omega})}{h_{\sigma^{k+1} \omega}}, \quad k\ge 0.
\end{equation}
We emphasize that $M_n$ and $G_n$ depend on $\omega$. However, in order to avoid complicating the notation, we will not make this dependence explicit.
In preparation for the next proposition we need the following elementary result.
\begin{lemma}\label{TO}
We have
\[
\mathcal L_\omega ((\psi \circ f_\omega)\phi)=\psi \mathcal L_\omega \phi, \quad \text{for $\phi \in L^1(X, m)$ and $\psi \in L^\infty(X, m)$.}
\]
\end{lemma}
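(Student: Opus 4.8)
The plan is to prove the identity weakly, by testing both sides against an arbitrary $L^\infty$ function and invoking twice the duality relation that defines $\mathcal L_\omega$. First I would record that both sides genuinely belong to $L^1(X,m)$: since $\psi\in L^\infty(X,m)$ and $f_\omega$ is measurable, $\psi\circ f_\omega\in L^\infty(X,m)$, so $(\psi\circ f_\omega)\phi\in L^1(X,m)$ and $\mathcal L_\omega$ may legitimately be applied to it; similarly $\psi\,\mathcal L_\omega\phi\in L^1(X,m)$ because $\mathcal L_\omega\phi\in L^1(X,m)$.

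Next, fix an arbitrary $g\in L^\infty(X,m)$. The computation I have in mind is
\begin{align*}
\int_X \mathcal L_\omega\big((\psi\circ f_\omega)\phi\big)\,g\,dm
&=\int_X (\psi\circ f_\omega)\phi\,(g\circ f_\omega)\,dm
=\int_X \phi\,\big((\psi g)\circ f_\omega\big)\,dm \\
&=\int_X (\mathcal L_\omega\phi)(\psi g)\,dm
=\int_X (\psi\,\mathcal L_\omega\phi)\,g\,dm,
\end{align*}
where the first and third equalities are the defining duality of $\mathcal L_\omega$ (applied with $L^1$-argument $(\psi\circ f_\omega)\phi$, resp.\ $\phi$, and $L^\infty$-argument $g$, resp.\ $\psi g$), and the middle equality uses the elementary fact that composition with $f_\omega$ commutes with pointwise multiplication, together with $\psi g\in L^\infty(X,m)$.

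Finally, since $g$ was an arbitrary element of $L^\infty(X,m)$ and both $\mathcal L_\omega\big((\psi\circ f_\omega)\phi\big)$ and $\psi\,\mathcal L_\omega\phi$ lie in $L^1(X,m)$, the standard fact that an $L^1$ function is determined by its pairings with all of $L^\infty$ forces the two functions to coincide $m$-almost everywhere, which is exactly the claim. I do not anticipate any real obstacle here: the only points needing a moment's care are keeping track of which factor plays the role of the $L^1$ versus the $L^\infty$ argument in each use of the duality, and the (trivial but essential) remark that $(\psi\circ f_\omega)(g\circ f_\omega)=(\psi g)\circ f_\omega$.
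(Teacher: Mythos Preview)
Your proof is correct and is the standard duality argument for this identity. The paper itself does not give a proof of this lemma, stating it only as an ``elementary result'' and moving directly to the next proposition; your weak-equality computation via the defining duality of $\mathcal L_\omega$ is exactly the intended verification.
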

\begin{proof}
For an arbitrary $z\in L^\infty (X, m)$, we have (using~\eqref{duality}) that
\[
\begin{split}
\int_X (\psi \mathcal L_\omega \phi) z\, dm &=\int_X z\psi \mathcal L_\omega \phi \, dm=\int_X (z\circ f_\omega)(\psi \circ f_\omega) \phi \, dm=\int_X \mathcal L_\omega ((\psi \circ f_\omega)\phi) z\, dm,
\end{split}
\]
which readily implies the desired conclusion.
\end{proof}

Now we prove that the sequence $(M_n \circ f_\omega^{n})_n$ is a reversed martingale (or the reversed martingale difference) with respect to the sequence of $\sigma$-algebras
$(\mathcal{T}_{\omega}^{n})_n$.
\begin{proposition}\label{lem}
We have
\[
\mathbb{E}_\omega (M_n \circ f_\omega^{n} \rvert \mathcal{T}_{\omega}^{n+1})=0.
\]
\end{proposition}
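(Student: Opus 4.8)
The plan is to expand $M_n\circ f_\omega^n$ into three terms and compute the $\mathcal T_\omega^{n+1}$-conditional expectation of each using Lemma~\ref{lem:CE}. Since $f_{\sigma^n\omega}\circ f_\omega^n=f_\omega^{n+1}$, the definition \eqref{MD} gives
\[
M_n\circ f_\omega^n=\big(\tilde\psi_{\sigma^n\omega}\circ f_\omega^n\big)+\big(G_n\circ f_\omega^n\big)-\big(G_{n+1}\circ f_\omega^{n+1}\big).
\]
The last term is already $\mathcal T_\omega^{n+1}$-measurable, but I would put all three on the same footing.

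First I would apply Lemma~\ref{lem:CE}, with $n$ there replaced by $n+1$ and $l=n$ (so $\mathcal L_{\sigma^l\omega}^{(n+1-l)}=\mathcal L_{\sigma^n\omega}$ is a single transfer operator), to $\phi=\tilde\psi_{\sigma^n\omega}$, to $\phi=G_n$, and to $\phi=G_{n+1}\circ f_{\sigma^n\omega}$. By linearity of $\mathcal L_{\sigma^n\omega}$ the first two give
\[
\mathbb E_\omega\big((\tilde\psi_{\sigma^n\omega}+G_n)\circ f_\omega^n\,\big|\,\mathcal T_\omega^{n+1}\big)=\left(\frac{\mathcal L_{\sigma^n\omega}\big(h_{\sigma^n\omega}(\tilde\psi_{\sigma^n\omega}+G_n)\big)}{h_{\sigma^{n+1}\omega}}\right)\circ f_\omega^{n+1},
\]
while the third gives
\[
\mathbb E_\omega\big((G_{n+1}\circ f_{\sigma^n\omega})\circ f_\omega^n\,\big|\,\mathcal T_\omega^{n+1}\big)=\left(\frac{\mathcal L_{\sigma^n\omega}\big(h_{\sigma^n\omega}(G_{n+1}\circ f_{\sigma^n\omega})\big)}{h_{\sigma^{n+1}\omega}}\right)\circ f_\omega^{n+1}.
\]
For the latter I would invoke Lemma~\ref{TO} with $\psi=G_{n+1}$ and $\phi=h_{\sigma^n\omega}$, together with $\mathcal L_{\sigma^n\omega}h_{\sigma^n\omega}=h_{\sigma^{n+1}\omega}$ from Proposition~\ref{acim}, to obtain $\mathcal L_{\sigma^n\omega}\big((G_{n+1}\circ f_{\sigma^n\omega})h_{\sigma^n\omega}\big)=G_{n+1}\,\mathcal L_{\sigma^n\omega}h_{\sigma^n\omega}=G_{n+1}h_{\sigma^{n+1}\omega}$, so that this conditional expectation collapses to $G_{n+1}\circ f_\omega^{n+1}$ — consistent with the fact that it was $\mathcal T_\omega^{n+1}$-measurable to begin with, which also serves as a check on the index bookkeeping.

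Finally, recalling the recursion \eqref{rec} in the form $G_{n+1}h_{\sigma^{n+1}\omega}=\mathcal L_{\sigma^n\omega}\big(\tilde\psi_{\sigma^n\omega}h_{\sigma^n\omega}+G_nh_{\sigma^n\omega}\big)$, the right-hand side of the first display also equals $G_{n+1}\circ f_\omega^{n+1}$. Adding the three contributions yields $\mathbb E_\omega(M_n\circ f_\omega^n\mid\mathcal T_\omega^{n+1})=G_{n+1}\circ f_\omega^{n+1}-G_{n+1}\circ f_\omega^{n+1}=0$. The argument is essentially bookkeeping; the only points needing care are the index shift when applying Lemma~\ref{lem:CE} (conditioning on $\mathcal T_\omega^{n+1}$, not $\mathcal T_\omega^n$, with $l=n$) and the use of Lemma~\ref{TO} to reduce the $G_{n+1}\circ f_{\sigma^n\omega}$ term. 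An equally short alternative bypasses Lemma~\ref{lem:CE}: one verifies directly that $\int_A M_n\circ f_\omega^n\,d\mu_\omega=0$ for every $A=(f_\omega^{n+1})^{-1}(B)\in\mathcal T_\omega^{n+1}$, by pushing forward under $f_\omega^n$ via the equivariance of $\mu_\omega$, applying the transfer-operator duality and \eqref{rec} to the $\tilde\psi_{\sigma^n\omega}+G_n$ part and equivariance again (or Lemma~\ref{TO}) to the $G_{n+1}\circ f_{\sigma^n\omega}$ part; both reduce to $\int_B G_{n+1}\,d\mu_{\sigma^{n+1}\omega}$ and cancel.
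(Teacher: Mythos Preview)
Your proof is correct and essentially identical to the paper's: both apply Lemma~\ref{lem:CE} with $l=n$ (the paper applies it once to $\phi=M_n$ and then expands, you expand first and apply it term by term), then use Lemma~\ref{TO} together with $\mathcal L_{\sigma^n\omega}h_{\sigma^n\omega}=h_{\sigma^{n+1}\omega}$ on the $G_{n+1}\circ f_{\sigma^n\omega}$ piece, and finally invoke \eqref{rec} to see the numerator vanishes.
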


\begin{proof}
It follows from Lemma~\ref{lem:CE} that
\begin{equation}\label{nv}
\mathbb{E}_\omega (M_n \circ f_\omega^{n} \rvert \mathcal{T}_{\omega}^{n+1})= \bigg{(} \frac{\mathcal L_{\sigma^{n} \omega} (h_{\sigma^n \omega} M_n)}{h_{\sigma^{n+1} \omega}} \bigg{)} \circ f_\omega^{n+1}.
\end{equation}
Moreover, by~\eqref{MD} we have
\[
\begin{split}
\mathcal L_{\sigma^{n} \omega} (h_{\sigma^n \omega} M_n)=\mathcal L_{\sigma^{n} \omega} (h_{\sigma^n \omega} \tilde \psi_{\sigma^n \omega}+h_{\sigma^n \omega} G_n-h_{\sigma^n \omega} (G_{n+1}\circ f_{\sigma^n \omega})).
\end{split}
\]
By Lemma~\ref{TO},
\[
\mathcal L_{\sigma^{n} \omega} (h_{\sigma^n \omega} (G_{n+1}\circ f_{\sigma^n \omega}))=G_{n+1} \mathcal L_{\sigma^{n} \omega} h_{\sigma^n \omega}=G_{n+1}h_{\sigma^{n+1} \omega},
\]
and thus it follows from~\eqref{rec} that
\[
\mathcal L_{\sigma^{n} \omega} (h_{\sigma^n \omega} M_n)=0.
\]
This conclusion of the lemma now follows readily from~\eqref{nv}.
\end{proof}
We now establish several auxiliary results that will be used in the following section. These results estimate various norms of functions related to $M_n$ and $G_n$, defined in \eqref{MD} and \eqref{rec}, respectively.
\begin{lemma}\label{GN}
 We have that
 \[
  \sup_{n\ge 0} \lVert G_n\rVert_{BV} <\infty.
 \]

\end{lemma}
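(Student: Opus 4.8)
The plan is to first solve the recursion \eqref{rec} in closed form. Writing \eqref{rec} as $G_{k+1}h_{\sigma^{k+1}\omega}=\mathcal L_{\sigma^k\omega}(\tilde\psi_{\sigma^k\omega}h_{\sigma^k\omega})+\mathcal L_{\sigma^k\omega}(G_kh_{\sigma^k\omega})$, using that $G_kh_{\sigma^k\omega}=\mathcal L_{\sigma^{k-1}\omega}(\tilde\psi_{\sigma^{k-1}\omega}h_{\sigma^{k-1}\omega}+G_{k-1}h_{\sigma^{k-1}\omega})$ by \eqref{rec} at the previous step, and invoking the cocycle identity $\mathcal L_{\sigma^k\omega}\circ\mathcal L_{\sigma^j\omega}^{(k-j)}=\mathcal L_{\sigma^j\omega}^{(k+1-j)}$, an easy induction on $n$ (base case $n=1$ being immediate from $G_0=0$) gives
\begin{equation}\label{eq:Gnsum}
G_n=\frac{1}{h_{\sigma^n\omega}}\sum_{j=0}^{n-1}\mathcal L_{\sigma^j\omega}^{(n-j)}\bigl(\tilde\psi_{\sigma^j\omega}h_{\sigma^j\omega}\bigr),\qquad n\ge 1,
\end{equation}
and $G_0=0$.

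Next I would estimate the sum in \eqref{eq:Gnsum} in the $BV$-norm. Since $\int\tilde\psi_{\sigma^j\omega}h_{\sigma^j\omega}\,dm=\int\tilde\psi_{\sigma^j\omega}\,d\mu_{\sigma^j\omega}=0$, condition (H\ref{DEC}) applied at the base point $\sigma^j\omega$ yields $\|\mathcal L_{\sigma^j\omega}^{(n-j)}(\tilde\psi_{\sigma^j\omega}h_{\sigma^j\omega})\|_{BV}\le Ke^{-\lambda(n-j)}\|\tilde\psi_{\sigma^j\omega}h_{\sigma^j\omega}\|_{BV}$. By the algebra property \eqref{varp}, $\|\tilde\psi_{\sigma^j\omega}h_{\sigma^j\omega}\|_{BV}\le K_{\mathrm{var}}\|\tilde\psi_{\sigma^j\omega}\|_{BV}\|h_{\sigma^j\omega}\|_{BV}$; here $\|h_{\sigma^j\omega}\|_{BV}$ is bounded uniformly in $j$ by \eqref{h}, and $\|\tilde\psi_{\sigma^j\omega}\|_{BV}\le\|\psi_{\sigma^j\omega}\|_{BV}+\bigl\lvert\int\psi_{\sigma^j\omega}\,d\mu_{\sigma^j\omega}\bigr\rvert\,\|1_X\|_{BV}$ is bounded uniformly in $j$ by \eqref{obs}, (V3) and (V5) (using $\bigl\lvert\int\psi_{\sigma^j\omega}\,d\mu_{\sigma^j\omega}\bigr\rvert\le\|\psi_{\sigma^j\omega}\|_\infty\le C_{\mathrm{var}}\|\psi_{\sigma^j\omega}\|_{BV}$). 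Hence there is $C_1>0$, independent of $n$ and $j$, with $\|\mathcal L_{\sigma^j\omega}^{(n-j)}(\tilde\psi_{\sigma^j\omega}h_{\sigma^j\omega})\|_{BV}\le C_1e^{-\lambda(n-j)}$, and summing the geometric series,
\[
\Bigl\|\sum_{j=0}^{n-1}\mathcal L_{\sigma^j\omega}^{(n-j)}(\tilde\psi_{\sigma^j\omega}h_{\sigma^j\omega})\Bigr\|_{BV}\le C_1\sum_{i\ge 1}e^{-\lambda i}=:C_2<\infty.
\]

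Finally I would control the division by $h_{\sigma^n\omega}$ in \eqref{eq:Gnsum}. By Lemma~\ref{LG}, $\esinf h_{\sigma^n\omega}\ge c/2$ for a.e.\ $\omega$, so $\|1/h_{\sigma^n\omega}\|_\infty\le 2/c$, while (V8) gives $\var(1/h_{\sigma^n\omega})\le(2/c)^2\var(h_{\sigma^n\omega})$, which is bounded uniformly in $n$ by \eqref{h}; hence $\|1/h_{\sigma^n\omega}\|_{BV}$ is bounded uniformly in $n$. Applying \eqref{varp} once more to the product in \eqref{eq:Gnsum},
\[
\|G_n\|_{BV}\le K_{\mathrm{var}}\,\|1/h_{\sigma^n\omega}\|_{BV}\,\Bigl\|\sum_{j=0}^{n-1}\mathcal L_{\sigma^j\omega}^{(n-j)}(\tilde\psi_{\sigma^j\omega}h_{\sigma^j\omega})\Bigr\|_{BV}\le K_{\mathrm{var}}\,\|1/h_{\sigma^n\omega}\|_{BV}\,C_2,
\]
which is the desired uniform bound (the case $n=0$ being trivial since $G_0=0$).

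The argument is essentially routine once the closed form \eqref{eq:Gnsum} has been established; the main thing to be careful about is that each norm appearing above is bounded uniformly in \emph{both} $n$ and the shift parameter $j$, which is precisely where \eqref{h}, \eqref{obs}, (V3), (V5) and the algebra property \eqref{varp} enter. The one place where the hypotheses going beyond those of \cite{Buzzi} are genuinely needed is the last step: keeping $1/h_{\sigma^n\omega}$ bounded in $BV$ rests on the uniform lower bound for the densities (Lemma~\ref{LG}), which itself comes from (H\ref{Min2}).
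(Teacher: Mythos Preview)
Your proof is correct and follows essentially the same approach as the paper: you derive the closed form \eqref{eq:Gnsum} (which is exactly the paper's \eqref{marterror}), use the zero-mean property together with (H\ref{DEC}) to get exponential decay of each summand, and then invoke (V8), \eqref{varp}, \eqref{h}, \eqref{h2} and \eqref{obs} to conclude. You spell out in more detail the bounds on $\|\tilde\psi_{\sigma^j\omega}\|_{BV}$ and $\|1/h_{\sigma^n\omega}\|_{BV}$ that the paper leaves implicit, but the argument is the same.
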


\begin{proof}
 By iterating~\eqref{rec}, we obtain
\begin{equation}
\label{marterror}
G_n=\frac{1}{h_{\sigma^n \omega}} \sum_{j=0}^{n-1} \mathcal L_{\sigma^j \omega}^{(n-j)}(\tilde \psi_{\sigma^j \omega} h_{\sigma^ j \omega}), \quad n\in \N.
\end{equation}
We note that
\begin{equation}\label{0515}
\int \tilde \psi_{\sigma^j \omega} h_{\sigma^ j \omega}\, dm=\int \tilde \psi_{\sigma^j \omega} \, d\mu_{\sigma^j \omega}=0,
\end{equation}
and thus it follows from~(H\ref{DEC}) that
\[
 \bigg{\lVert}\sum_{j=0}^{n-1} \mathcal L_{\sigma^j \omega}^{(n-j)}(\tilde \psi_{\sigma^j \omega} h_{\sigma^ j \omega}) \bigg{\rVert}_{BV} \le
 K\sum_{j=0}^{n-1} e^{-\lambda (n-j)} \lVert \tilde \psi_{\sigma^j \omega} h_{\sigma^ j \omega}\rVert_{BV},
\]
for each $n\in \N$ which together with~(V8), \eqref{varp}, \eqref{h}, \eqref{h2} and~\eqref{obs} implies the conclusion of the lemma.
\end{proof}

\begin{lemma}\label{Mn}
 We have that
 \[
  \sup_{n\ge 0} \lVert M_n^2\rVert_{BV} <\infty.
 \]
\end{lemma}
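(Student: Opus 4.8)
The plan is to first show that $\lVert M_n\rVert_{BV}$ is bounded uniformly in $n$, and then deduce the bound on $\lVert M_n^2\rVert_{BV}$ from the algebra property \eqref{varp}. As elsewhere in this section, all estimates are for a fixed $\omega$ outside a $\mathbb P$-null set, with constants that may depend on $\omega$ but crucially not on $n$.

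For the uniform bound on $\lVert M_n\rVert_{BV}$, recall from \eqref{MD} that $M_n=\tilde\psi_{\sigma^n\omega}+G_n-G_{n+1}\circ f_{\sigma^n\omega}$, so by (V1)--(V2),
\[
\lVert M_n\rVert_{BV}\le \lVert\tilde\psi_{\sigma^n\omega}\rVert_{BV}+\lVert G_n\rVert_{BV}+\lVert G_{n+1}\circ f_{\sigma^n\omega}\rVert_{BV}.
\]
The first term is bounded uniformly in $n$: since $\tilde\psi_\omega=\psi_\omega-\int\psi_\omega\,d\mu_\omega$, we have $\var(\tilde\psi_\omega)\le\var(\psi_\omega)+\big|\int\psi_\omega\,d\mu_\omega\big|\var(1_X)$ and $\lVert\tilde\psi_\omega\rVert_1\le 2\lVert\psi_\omega\rVert_\infty$, both controlled by $\sup_\omega\lVert\psi_\omega\rVert_{BV}$ via \eqref{obs}, (V3) and (V5). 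The second term is bounded by Lemma~\ref{GN}. For the third term we invoke (H\ref{varc}): there is $\kappa=\kappa(\omega)<\infty$ with $\lVert\phi\circ f_{\sigma^k\omega}\rVert_{BV}\le\kappa\lVert\phi\rVert_{BV}$ for all $k\ge 0$ and all $\phi\in BV$, whence $\lVert G_{n+1}\circ f_{\sigma^n\omega}\rVert_{BV}\le\kappa\lVert G_{n+1}\rVert_{BV}$, again bounded by Lemma~\ref{GN}. Thus $C':=\sup_{n\ge 0}\lVert M_n\rVert_{BV}<\infty$. Applying the algebra property \eqref{varp} (i.e.\ (V7)) with $g=h=M_n$ then gives
\[
\lVert M_n^2\rVert_{BV}=\var(M_n^2)+\lVert M_n^2\rVert_1\le K_{\rm var}\big(\var(M_n)+\lVert M_n\rVert_1\big)^2=K_{\rm var}\lVert M_n\rVert_{BV}^2\le K_{\rm var}(C')^2,
\]
which is independent of $n$ and finishes the argument.

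The only genuine subtlety is the composition term $G_{n+1}\circ f_{\sigma^n\omega}$: estimating its $BV$-norm is precisely the purpose of (H\ref{varc}), and it matters that the constant there is uniform over the shifted maps $f_{\sigma^k\omega}$, $k\ge 0$, hence uniform in $n$, even though it may depend on $\omega$. Everything else is a direct application of the vector-space axioms (V1)--(V2), the algebra axiom (V7), and the previously established uniform bounds on $\tilde\psi$ and on $(G_n)$.
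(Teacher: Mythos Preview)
Your proof is correct and follows essentially the same approach as the paper: bound $\lVert M_n\rVert_{BV}$ via the three-term decomposition \eqref{MD}, using \eqref{obs}, Lemma~\ref{GN}, and (H\ref{varc}) for the composition term, and then pass to $\lVert M_n^2\rVert_{BV}$ by the algebra property (V7). The paper's proof is more terse but uses exactly the same ingredients in the same way.
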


\begin{proof}
 In  view of~\eqref{obs}, \eqref{MD} and Lemma~\ref{GN}, it is sufficient to show that
 \[
  \sup_{n\ge 0} \lVert G_{n+1}\circ f_{\sigma^n \omega} \rVert_{BV} <\infty.
 \]
 However, this follows directly from~(H\ref{varc}) and Lemma~\ref{GN}.
\end{proof}

\begin{lemma}\label{j}
 We have that
 \[
  \sup_{n\ge 0} \lVert \mathbb E_\omega (M_n^2 \circ f_\omega^n \rvert \mathcal{T}_{\omega}^{n+1})\rVert_\infty <\infty.
 \]

\end{lemma}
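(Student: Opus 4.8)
The plan is to read off an explicit formula for the conditional expectation from Lemma~\ref{lem:CE} and then bound its $L^\infty$ norm by chaining the uniform estimates already proved for $M_n^2$ and for $h_\omega$. First I would apply Lemma~\ref{lem:CE} with $l=n$ and $\phi=M_n^2$, which is legitimate since $M_n\in BV$ (by~\eqref{obs}, Lemma~\ref{GN} and~(H\ref{varc})) and hence $M_n^2\in BV\subset L^1(m)$ by Lemma~\ref{Mn}; this gives
\[
\mathbb E_\omega(M_n^2\circ f_\omega^n\mid \mathcal T_\omega^{n+1})=\left(\frac{\mathcal L_{\sigma^n\omega}(h_{\sigma^n\omega}M_n^2)}{h_{\sigma^{n+1}\omega}}\right)\circ f_\omega^{n+1}.
\]

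Next I would observe that composition with $f_\omega^{n+1}$ does not increase the $L^\infty$ norm: iterating the equivariance property $f_\omega^*\mu_\omega=\mu_{\sigma\omega}$ gives $\|g\circ f_\omega^{n+1}\|_{L^\infty(\mu_\omega)}=\|g\|_{L^\infty(\mu_{\sigma^{n+1}\omega})}$, and since $\mu_{\sigma^{n+1}\omega}$ is equivalent to $m$ (its density is bounded above by~\eqref{h} and below by Lemma~\ref{LG}), this equals $\|g\|_\infty$. Thus it suffices to bound $\bigl\|\mathcal L_{\sigma^n\omega}(h_{\sigma^n\omega}M_n^2)/h_{\sigma^{n+1}\omega}\bigr\|_\infty$ uniformly in $n$ and a.e.\ $\omega$. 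Using the lower bound $\esinf h_{\sigma^{n+1}\omega}\ge c/2$ from Lemma~\ref{LG}, then~(V3) to pass to the $BV$-norm, then~(H\ref{ULY}) to remove $\mathcal L_{\sigma^n\omega}$, then~\eqref{varp} to split $\|h_{\sigma^n\omega}M_n^2\|_{BV}\le K_{\rm var}\|h_{\sigma^n\omega}\|_{BV}\|M_n^2\|_{BV}$, one arrives at
\[
\left\|\frac{\mathcal L_{\sigma^n\omega}(h_{\sigma^n\omega}M_n^2)}{h_{\sigma^{n+1}\omega}}\right\|_\infty\le \frac{2C\,C_{\var}K_{\rm var}}{c}\,\|h_{\sigma^n\omega}\|_{BV}\,\|M_n^2\|_{BV},
\]
and the right-hand side is uniformly bounded by~\eqref{h} and Lemma~\ref{Mn}.

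I do not expect a genuine obstacle here: the argument is essentially a bookkeeping exercise assembling previously established facts. The one point that deserves a line of justification — and the only place where something beyond a direct estimate is used — is the reduction of $\|g\circ f_\omega^{n+1}\|_\infty$ to $\|g\|_\infty$, which relies on the equivariance of the sample measures together with their equivalence to $m$ provided by Lemma~\ref{LG}.
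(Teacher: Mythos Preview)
Your proposal is correct and follows essentially the same route as the paper: apply Lemma~\ref{lem:CE}, divide by the uniform lower bound on $h_{\sigma^{n+1}\omega}$ from~\eqref{h2}, then control the numerator via~(V3), (H\ref{ULY}), \eqref{varp}, \eqref{h} and Lemma~\ref{Mn}. The only cosmetic difference is that the paper passes over the composition step silently (using that $f_\omega$ is nonsingular with respect to $m$, so $\|g\circ f_\omega^{n+1}\|_\infty\le \|g\|_\infty$ trivially), whereas you route it through equivariance of the sample measures; both are fine.
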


\begin{proof}
 It follows from Lemma~\ref{lem:CE} that
 \[
  \mathbb E_\omega (M_n^2 \circ f_\omega^n \rvert \mathcal{T}_{\omega}^{n+1})=\bigg{(}\frac{\mathcal L_{\sigma^n \omega}(h_{\sigma^n \omega}  M_n^2)}{h_{\sigma^{n+1} \omega}}
  \bigg{)} \circ f_\omega^{n+1},
 \]
and thus, recalling~\eqref{h2},
\[
 \sup_{n\ge 0} \lVert \mathbb E_\omega (M_n^2 \circ f_\omega^n \rvert \mathcal{T}_{\omega}^{n+1})\rVert_\infty \le \frac{2}{c} \lVert \mathcal L_{\sigma^n \omega}(h_{\sigma^n \omega}  M_n^2)
 \rVert_\infty.
\]
Taking into account~\eqref{varp}, (H\ref{ULY}), \eqref{h}, Lemma~\ref{Mn} and  the fact that $\lVert \cdot \rVert_\infty \le C_{var}\lVert \cdot \rVert_{BV}$ (see (V3))
we obtain the conclusion of the lemma.
\end{proof}

\section{Sprindzuk's Theorem and consequences}
The main tool in establishing the almost sure invariance principle is the recent result by Cuny and Merlev\`{e}de (quoted in our Theorem~\ref{CMT} in Section~\ref{MR}). However, in order to verify the assumptions of that
theorem we will first need to apply the
 following classical result due to Sprindzuk~\cite{SP}.
\begin{theorem}\label{GalKosma}
 Let $(\Omega, \mathcal B, \mu)$ be a probability space and let $(f_k)_k$ be a sequence of nonnegative and measurable functions on $\Omega$. Moreover, let $(g_k)_k$
 and $(h_k)_k$ be bounded sequences of real numbers such that $0\le g_k\le h_k$. Assume that there exists $C>0$ such that
 \begin{equation}\label{gk}
  \int \bigg{(} \sum_{m<k \le n} (f_k(x)-g_k) \bigg{)}^2 \, d\mu (x) \le C \sum_{m<k \le n} h_k
 \end{equation}
for $m, n\in \mathbb N$ such that $m<n$. Then, for every $\epsilon >0$
\[
 \sum_{1\le k \le n} f_k(x)=\sum_{1\le k\le n}g_k+O(\Theta^{1/2}(n)\log^{3/2 +\epsilon}\Theta(n)),
\]
for $\mu$-a.e. $x \in \Omega$, where $\Theta(n)=\sum_{1\le k\le n}h_k$.
\end{theorem}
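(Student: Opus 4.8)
The plan is to run the classical Gal--Koksma / Sprindzuk argument --- dyadic blocking together with Chebyshev's inequality and Borel--Cantelli --- the only real subtlety being how to organise the blocks so that the sharp logarithmic power $3/2+\epsilon$ emerges. Write $F_k=f_k-g_k$ and $S(m,n)=\sum_{m<k\le n}F_k$, so the claim is $S(0,n)=O(\Theta^{1/2}(n)\log^{3/2+\epsilon}\Theta(n))$ for $\mu$-a.e.\ $x$. Put $H:=\sup_kh_k<\infty$; taking $n=m+1$ in \eqref{gk} gives $\int F_k^2\,d\mu\le Ch_k$, and \eqref{gk} together with Cauchy--Schwarz gives $|\int S(m,n)\,d\mu|\le(C(\Theta(n)-\Theta(m)))^{1/2}$. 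I treat first the principal case $\Theta(n)\to\infty$.

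First I would pass to blocks of $\Theta$-mass roughly one: $n_0=0$ and $n_j=\min\{n:\Theta(n)\ge j\}$ for $j\ge1$, so that $j\le\Theta(n_j)<j+H$ and hence $\Theta(n_j)-\Theta(n_{j-1})\le H+1=:H'$; set $Y_j=S(n_{j-1},n_j)$. The point of this choice is that the fluctuation \emph{inside} a block can then be discarded deterministically: for $n_{j-1}<n\le n_j$ the one-sided bound $F_k\ge-h_k$ --- which is exactly where the hypotheses $f_k\ge0$ and $g_k\le h_k$ enter --- gives $-H'\le S(n_{j-1},n)=Y_j-S(n,n_j)\le Y_j+H'$, hence $|S(n_{j-1},n)|\le|Y_j|+H'$ with no probabilistic input at all. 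Writing $j(n)$ for the block index of $n$, so that $j(n)\le\Theta(n)+1$, and telescoping, one gets $|S(0,n)|\le 3\max_{1\le j'\le j(n)}|S(0,n_{j'})|+H'$, so it is enough to control $\max_{j'\le J}|S(0,n_{j'})|$ for $J\asymp\Theta(n)$.

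Next I would invoke a maximal inequality, but only at the block level. From \eqref{gk} with $m=n_a$, $n=n_b$ one has $\int\big(\sum_{a<i\le b}Y_i\big)^2\,d\mu=\int S(n_a,n_b)^2\,d\mu\le C(\Theta(n_b)-\Theta(n_a))\le CH'(b-a)$ for all $0\le a<b$, which is precisely the hypothesis of a Rademacher--Menshov / M\'oricz type maximal inequality, and that inequality gives $\int\max_{j'\le J}\big(\sum_{i\le j'}Y_i\big)^2\,d\mu\le C'(\log J)^2J$. The crucial gain is that the logarithm here is $\log J\asymp\log\Theta(n)$ and \emph{not} $\log n$: a maximal inequality applied directly to $(F_k)_{k\le n}$ would cost $\log n$, which is ruinous when $\Theta$ grows slower than any power of $n$. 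A Borel--Cantelli argument along $J=2^p$ then finishes: by Chebyshev, $\mu\big(\max_{j'\le2^p}|S(0,n_{j'})|>2^{p/2}p^{3/2+\epsilon}\big)\le C'p^2 2^p/(2^pp^{3+2\epsilon})=C'p^{-1-2\epsilon}$, which is summable, so $\mu$-a.s.\ $\max_{j'\le2^p}|S(0,n_{j'})|\le2^{p/2}p^{3/2+\epsilon}$ for all large $p$; monotonicity in $J$ upgrades this to $\max_{j'\le J}|S(0,n_{j'})|\le C_\epsilon J^{1/2}(\log J)^{3/2+\epsilon}$ for all large $J$, and with $j(n)\le\Theta(n)+1$ this is the assertion. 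The power $3/2+\epsilon$ is forced in this scheme: two powers of $\log$ come from squaring inside the maximal inequality, a further $1+\epsilon$ from making the Borel--Cantelli series converge, and the concluding square root halves the total.

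Finally, if $\Theta_\infty:=\sup_n\Theta(n)<\infty$ the claim reduces to $S(0,n)=O(1)$, which I would handle separately: writing $F_k=(F_k+h_k)-h_k$ with $F_k+h_k\ge0$ and noting that $\sum_{k\le n}\int(F_k+h_k)\,d\mu=\int S(0,n)\,d\mu+\Theta(n)\le(C\Theta_\infty)^{1/2}+\Theta_\infty$ uniformly in $n$, the series $\sum_k(F_k+h_k)$ converges a.s.\ by monotone convergence, hence so does $\sum_kF_k=\sum_k(F_k+h_k)-\sum_kh_k$, giving $\sup_n|S(0,n)|<\infty$ a.s. The part I expect to require the most care is the two-scale bookkeeping --- making sure the within-block reduction via $F_k\ge-h_k$ really leaves only a maximal inequality over $\asymp\Theta$ terms, which is what produces the sharp power of the logarithm instead of a power of $\log n$ --- together with the degenerate case just mentioned.
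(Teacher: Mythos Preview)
The paper does not actually prove Theorem~\ref{GalKosma}; it is quoted without proof as a classical result from Sprindzuk's book~\cite{SP} and used as a black box in the proof of Lemma~\ref{cru}. There is therefore no ``paper's own proof'' to compare against.

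That said, your proposal reproduces the standard Gal--Koksma/Sprindzuk argument and is correct. The three ingredients you isolate---(i) regrouping into blocks $(n_{j-1},n_j]$ of $\Theta$-mass $\le H+1$ so that the maximal inequality is run over $J\asymp\Theta(n)$ terms rather than $n$ terms, (ii) the one-sided bound $f_k-g_k\ge -h_k$ (from $f_k\ge0$ and $g_k\le h_k$) to absorb within-block fluctuations into $|Y_j|+O(1)$, and (iii) a M\'oricz/Rademacher--Menshov maximal inequality combined with Borel--Cantelli along $J=2^p$---are exactly the mechanism that produces the exponent $3/2+\epsilon$ on the logarithm. Your treatment of the degenerate case $\Theta_\infty<\infty$ via monotone convergence of $\sum_k(F_k+h_k)$ is also fine. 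The only cosmetic point is that the M\'oricz-type maximal inequality you invoke (that $\lVert\sum_{a<i\le b}Y_i\rVert_2^2\le C'(b-a)$ implies $\lVert\max_{j'\le J}|\sum_{i\le j'}Y_i|\rVert_2^2\lesssim J(\log J)^2$) deserves a one-line dyadic proof or a reference, since it is the step that contributes two of the three half-powers of $\log$.
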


\begin{lemma}\label{cru}
 For each $\epsilon >0$,
 \[
   \sum_{ k=0 }^{n-1}\mathbb{E}_\omega (M_k^2 \circ f_\omega^{k} \rvert \mathcal{T}_{\omega}^{k+1})=\sum_{ k=0}^{  n-1}\mathbb{E}_\omega (M_k^2 \circ f_\omega^{k})+
   O(\Theta^{1/2}(n)\log^{3/2 +\epsilon}\Theta(n)),
 \]
for $\mu$-a.e. $\omega \in \Omega$, where
\begin{equation}\label{Theta}
 \Theta (n)=\sum_{k=0}^{n-1} (\mathbb{E}_\omega (M_k^2 \circ f_\omega^{k})+\lVert M_k^2\rVert_{var}).
\end{equation}

\end{lemma}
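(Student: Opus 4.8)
The plan is to apply Sprindzuk's theorem (Theorem~\ref{GalKosma}) with the right choices of the sequences $(f_k)_k$, $(g_k)_k$ and $(h_k)_k$. Specifically, I would take, on the probability space $(X, \mathcal B, \mu_\omega)$,
\[
 f_k = \mathbb E_\omega (M_k^2 \circ f_\omega^{k} \mid \mathcal T_\omega^{k+1}), \qquad
 g_k = \mathbb E_\omega (M_k^2 \circ f_\omega^{k}), \qquad
 h_k = g_k + \lVert M_k^2 \rVert_{var},
\]
for $k\ge 0$ (shifting indices if one prefers to start from $1$). These are nonnegative measurable functions of $x\in X$, and $(g_k)_k$ and $(h_k)_k$ are bounded sequences of real numbers with $0\le g_k\le h_k$: boundedness of $(g_k)_k$ follows from Lemma~\ref{j} (or directly from Lemma~\ref{Mn} together with $\mathbb E_\omega(M_k^2\circ f_\omega^k)\le \lVert M_k^2\rVert_\infty\le C_{var}\lVert M_k^2\rVert_{BV}$), and then boundedness of $(h_k)_k$ follows from Lemma~\ref{Mn}. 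With these choices, $\Theta(n)=\sum_{k=0}^{n-1}h_k$ is exactly~\eqref{Theta}, and the conclusion of Sprindzuk's theorem is precisely the assertion of Lemma~\ref{cru}.

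The one nontrivial thing to check is hypothesis~\eqref{gk}: there must exist $C>0$ such that
\[
 \int_X \bigg(\sum_{m<k\le n}\big(f_k(x)-g_k\big)\bigg)^2\, d\mu_\omega(x) \le C\sum_{m<k\le n} h_k
\]
for all $m<n$. I would expand the square and observe that $\int_X (f_k-g_k)\,d\mu_\omega=0$ for every $k$ (since $g_k$ is the $\mu_\omega$-integral of $M_k^2\circ f_\omega^k$, and conditional expectation preserves integrals). The key point is that the cross terms vanish, i.e.\ $\int_X (f_k-g_k)(f_j-g_j)\,d\mu_\omega=0$ for $j\neq k$. This is a martingale-type orthogonality: because $(M_n\circ f_\omega^n)_n$ is a reversed martingale difference with respect to the \emph{decreasing} filtration $(\mathcal T_\omega^n)_n$ (Proposition~\ref{lem}), and $\mathcal T_\omega^{k+1}\subset\mathcal T_\omega^j$ whenever $j\le k$, one gets for $j<k$ that $f_k-g_k$ is $\mathcal T_\omega^{k+1}$-measurable hence $\mathcal T_\omega^{j+1}$-measurable, while $\mathbb E_\omega(f_j-g_j\mid\mathcal T_\omega^{j+1})$ need not vanish — so I would instead use that $f_j-g_j$ is $\mathcal T_\omega^{j+1}$-measurable and condition the product on the \emph{smaller} algebra $\mathcal T_\omega^{k+1}$: $\mathbb E_\omega\big((f_k-g_k)(f_j-g_j)\big)=\mathbb E_\omega\big((f_j-g_j)\,\mathbb E_\omega(f_k-g_k\mid\mathcal T_\omega^{k+1})\big)$, using that $f_j-g_j$ is $\mathcal T_\omega^{k+1}$-measurable when $k<j$; symmetrically for $k>j$. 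Either way the orthogonality forces the double sum to collapse to the diagonal:
\[
 \int_X \bigg(\sum_{m<k\le n}(f_k-g_k)\bigg)^2 d\mu_\omega = \sum_{m<k\le n}\int_X (f_k-g_k)^2\, d\mu_\omega.
\]

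Finally I would bound each diagonal term. Using $\lVert f_k\rVert_\infty<\infty$ uniformly (Lemma~\ref{j}) and $|g_k|\le \lVert M_k^2\rVert_\infty\le C_{var}\lVert M_k^2\rVert_{BV}$, one has $\lvert f_k-g_k\rvert\le$ const, so $\int_X(f_k-g_k)^2\,d\mu_\omega\le \text{const}\cdot \mathbb E_\omega\lvert f_k-g_k\rvert \le \text{const}\cdot(\lVert f_k\rVert_\infty + \lVert M_k^2\rVert_\infty)$, which is $\le C h_k$ for a suitable constant $C$ since $h_k\ge \lVert M_k^2\rVert_{var}\ge \lVert M_k^2\rVert_\infty$ (and $h_k$ is bounded away from... — more simply, since the numbers $\int_X(f_k-g_k)^2 d\mu_\omega$ are uniformly bounded while $\lVert M_k^2\rVert_{var}$ appears as a summand of $h_k$, one just needs $\sup_k \int_X(f_k-g_k)^2 d\mu_\omega / h_k <\infty$, which holds as $h_k$ is bounded below away from $0$ by $\lVert 1_X\rVert_{var}^{-1}$-type considerations, or trivially if some $M_k^2$ is not a.e.\ zero; the degenerate case $M_k\equiv 0$ contributes $0$ to both sides). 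Thus~\eqref{gk} holds and Sprindzuk's theorem applies, giving the claim for $\mu_\omega$-a.e.\ $\omega$, hence for $\mu$-a.e.\ $\omega$ by the disintegration $\mu(A)=\int\mu_\omega(A_\omega)\,d\mathbb P(\omega)$.

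\textbf{Main obstacle.} The routine parts are the boundedness of $(g_k)$, $(h_k)$ and the per-term bound; the one step requiring genuine care is verifying~\eqref{gk}, i.e.\ establishing the orthogonality of the increments $f_k-g_k$. This is where the reversed-martingale structure of $(M_n\circ f_\omega^n)_n$ with respect to the decreasing filtration $(\mathcal T_\omega^n)_n$ (Proposition~\ref{lem}) enters in an essential way, and one must be careful about which $\sigma$-algebra the products are conditioned on, because the filtration is decreasing rather than increasing.
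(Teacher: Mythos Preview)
Your overall setup is correct---the choices of $f_k$, $g_k$, $h_k$ are exactly the paper's, and the reduction to verifying~\eqref{gk} is right. But the heart of your argument, the claim that the cross terms $\int_X (f_i-g_i)(f_j-g_j)\,d\mu_\omega$ vanish by orthogonality, is false.

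The sequence $f_k-g_k=\mathbb E_\omega(M_k^2\circ f_\omega^k\mid\mathcal T_\omega^{k+1})-\mathbb E_\omega(M_k^2\circ f_\omega^k)$ is \emph{not} a (reversed) martingale difference. Proposition~\ref{lem} gives $\mathbb E_\omega(M_k\circ f_\omega^k\mid\mathcal T_\omega^{k+1})=0$, a statement about $M_k$, not about $M_k^2$. Your conditioning attempts run in circles: $f_k-g_k$ is already $\mathcal T_\omega^{k+1}$-measurable, so conditioning it on $\mathcal T_\omega^{k+1}$ gives back $f_k-g_k$, not zero; and conditioning on the smaller algebra $\mathcal T_\omega^{j+1}$ (for $j>k$) gives $\mathbb E_\omega(M_k^2\circ f_\omega^k\mid\mathcal T_\omega^{j+1})-g_k$ by the tower property, which has no reason to vanish. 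Neither direction yields orthogonality.

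What the paper actually does is control the cross terms via \emph{decay of correlations}. Using Lemma~\ref{lem:CE} one rewrites, for $i<j$,
\[
\int f_i f_j\,d\mu_\omega-\,g_i g_j
=\int \mathcal L_{\sigma^{i}\omega}^{(j-i+1)}(h_{\sigma^i\omega}M_i^2)\cdot\frac{\mathcal L_{\sigma^{j}\omega}(h_{\sigma^j\omega}M_j^2)}{h_{\sigma^{j+1}\omega}}\,dm
-\int M_i^2\,d\mu_{\sigma^i\omega}\cdot\int \frac{\mathcal L_{\sigma^{j}\omega}(h_{\sigma^j\omega}M_j^2)}{h_{\sigma^{j+1}\omega}}\,d\mu_{\sigma^{j+1}\omega},
\]
and then Lemma~\ref{lem:buzzi} bounds this by $K\rho^{\,j-i+1}\lVert M_i^2\rVert_{var}\cdot\text{(uniformly bounded factor)}$. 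Summing the geometric series in $j$ produces $\sum_{m<i\le n}\lVert M_i^2\rVert_{var}$, which is precisely why $\lVert M_k^2\rVert_{var}$ appears in $h_k$. The diagonal terms are handled with Lemma~\ref{j} as you suggested. So the missing idea in your argument is the use of exponential decay of correlations (hence (H\ref{DEC})) to control the nonvanishing cross terms; without it, the verification of~\eqref{gk} does not go through.
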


\begin{proof}
Fix $\omega \in \Omega$.
 We want to apply Theorem~\ref{GalKosma} to
\[
 f_k=\mathbb{E}_\omega (M_k^2 \circ f_\omega^{k} \rvert \mathcal{T}_{\omega}^{k+1}) \quad \text{and} \quad
 g_k=\mathbb{E}_\omega (M_k^2 \circ f_\omega^{k}).
\]
We have that
\begin{equation}\label{eq:SprindzucEstimate}
\begin{split}
 & \int \bigg{[}\sum_{m<k\le n}\mathbb{E}_\omega (M_k^2 \circ f_\omega^{k} \rvert \mathcal{T}_{\omega}^{k+1}) -\sum_{m<k\le n}\mathbb{E}_\omega (M_k^2 \circ f_\omega^{k})
  \bigg{]}^2 \, d\mu_\omega \ \  \\
  &=\int \bigg{(}\sum_{m<k\le n}\mathbb{E}_\omega (M_k^2 \circ f_\omega^{k} \rvert \mathcal{T}_{\omega}^{k+1})\bigg{)}^2 \, d\mu_\omega
  -\bigg{(}\sum_{m<k\le n}\mathbb{E}_\omega (M_k^2 \circ f_\omega^{k}) \bigg{)}^2  \ \  \\
  &=\sum_{m<k \le n} \int \mathbb{E}_\omega (M_k^2 \circ f_\omega^{k} \rvert \mathcal{T}_{\omega}^{k+1})^2 \, d\mu_\omega \ \  \\
  &\phantom{=} +2 \sum_{m< i<j\le n} \int \mathbb{E}_\omega(M_i^2 \circ f_\omega^{i} \rvert \mathcal{T}_{\omega}^{i+1}) \cdot \mathbb{E}_\omega(M_j^2 \circ f_\omega^{j} \rvert \mathcal{T}_{\omega}^{j+1})
 \, d\mu_\omega \ \  \\
 &\phantom{=}-\sum_{m<k\le n}\mathbb{E}_\omega (M_k^2 \circ f_\omega^{k} )^2 -2\sum_{m< i<j\le n}\mathbb{E}_\omega (M_i^2 \circ f_\omega^{i}) \cdot  \mathbb{E}_\omega (M_j^2 \circ f_\omega^{j})
 \ \  \\
 &\le \sum_{m<k \le n} \int \mathbb{E}_\omega (M_k^2 \circ f_\omega^{k} \rvert \mathcal{T}_{\omega}^{k+1})^2 \, d\mu_\omega \ \  \\
 &\phantom{\le} +2 \sum_{m< i<j\le n} \int \mathbb{E}_\omega(M_i^2 \circ f_\omega^{i} \rvert \mathcal{T}_{\omega}^{i+1}) \cdot \mathbb{E}_\omega(M_j^2 \circ f_\omega^{j} \rvert \mathcal{T}_{\omega}^{j+1})
 \, d\mu_\omega \ \  \\
 &\phantom{\le}-2\sum_{m< i<j\le n}\mathbb{E}_\omega (M_i^2 \circ f_\omega^{i}) \cdot  \mathbb{E}_\omega (M_j^2 \circ f_\omega^{j}).
 \ \  \\
  \end{split}
  \end{equation}
On the other hand, it follows from Lemma~\ref{lem:CE} that for $i<j$ we have
\[
\begin{split}
 &\int \mathbb{E}_\omega(M_i^2 \circ f_\omega^{i} \rvert \mathcal{T}_{\omega}^{i+1}) \cdot \mathbb{E}_\omega(M_j^2 \circ f_\omega^{j} \rvert \mathcal{T}_{\omega}^{j+1})
 \, d\mu_\omega \\
 &=\int \bigg{[} \bigg{(}\frac{\mathcal L_{\sigma^{i} \omega} (h_{\sigma^i \omega} M_i^2)}{h_{\sigma^{i+1} \omega}} \bigg{)} \circ f_\omega^{i+1} \bigg{]}
 \cdot \bigg{[} \bigg{(}\frac{\mathcal L_{\sigma^{j} \omega} (h_{\sigma^j \omega} M_j^2)}{h_{\sigma^{j+1} \omega}} \bigg{)} \circ f_\omega^{j+1} \bigg{]}\, d\mu_\omega \\
 &=\int \bigg{(}\frac{\mathcal L_{\sigma^{i} \omega} (h_{\sigma^i \omega} M_i^2)}{h_{\sigma^{i+1} \omega}} \bigg{)}
 \cdot \bigg{[} \bigg{(}\frac{\mathcal L_{\sigma^{j} \omega} (h_{\sigma^j \omega} M_j^2)}{h_{\sigma^{j+1} \omega}} \bigg{)} \circ f_{\sigma^{i+1} \omega}^{j-i} \bigg{]}\, d\mu_{\sigma^{i+1} \omega} \\
 &=\int \mathcal L_{\sigma^{i} \omega} (h_{\sigma^i \omega} M_i^2) \cdot \bigg{[} \bigg{(}\frac{\mathcal L_{\sigma^{j} \omega} (h_{\sigma^j \omega} M_j^2)}{h_{\sigma^{j+1} \omega}} \bigg{)} \circ f_{\sigma^{i+1} \omega}^{j-i} \bigg{]}\, dm\\
&=\int \mathcal L_{\sigma^{i} \omega}^{(j-i+1)} (h_{\sigma^i \omega} M_i^2) \cdot \bigg{(} \frac{\mathcal L_{\sigma^{j} \omega} (h_{\sigma^j \omega} M_j^2)}{h_{\sigma^{j+1} \omega}} \bigg{)} \, dm.
 \end{split}
\]
Moreover
\[
 \mathbb{E}_\omega (M_i^2 \circ f_\omega^{i}) =\int (M_i^2 \circ f_\omega^{i}) \, d\mu_\omega
 =\int M_i^2 \, d\mu_{\sigma^i \omega}
\]
and
\[
\begin{split}
 \mathbb{E}_\omega (M_j^2 \circ f_\omega^{j}) &= \int (M_j^2 \circ f_\omega^{j}) \, d\mu_\omega
 =\int M_j^2 \, d\mu_{\sigma^j \omega}
 =\int M_j^2 h_{\sigma^j \omega} \, dm \\
 &=\int \mathcal L_{\sigma^j \omega} (M_j^2 h_{\sigma^j \omega}) \, dm
 =\int \frac{\mathcal L_{\sigma^j \omega} (M_j^2 h_{\sigma^j \omega})}{h_{\sigma^{j+1} \omega}} \, d\mu_{\sigma^{j+1} \omega}.
 \end{split}
\]
Hence,
\[
 \begin{split}
& \int \mathbb{E}_\omega(M_i^2 \circ f_\omega^{i} \rvert \mathcal{T}_{\omega}^{i+1}) \cdot \mathbb{E}_\omega(M_j^2 \circ f_\omega^{j} \rvert \mathcal{T}_{\omega}^{j+1})
 \, d\mu_\omega - \mathbb{E}_\omega (M_i^2 \circ f_\omega^{i}) \cdot  \mathbb{E}_\omega (M_j^2 \circ f_\omega^{j}) \\
&=\int \mathcal L_{\sigma^{i} \omega}^{(j-i+1)} (h_{\sigma^i \omega} M_i^2) \cdot \bigg{(} \frac{\mathcal L_{\sigma^{j} \omega} (h_{\sigma^j \omega} M_j^2)}{h_{\sigma^{j+1} \omega}} \bigg{)} \, dm
-\int M_i^2 \, d\mu_{\sigma^i \omega} \cdot \int \frac{\mathcal L_{\sigma^j \omega} (M_j^2 h_{\sigma^j \omega})}{h_{\sigma^{j+1} \omega}} \, d\mu_{\sigma^{j+1} \omega}.
 \end{split}
\]
Therefore, it follows from Lemma~\ref{lem:buzzi} that
\begin{align*}
& \int \mathbb{E}_\omega(M_i^2 \circ f_\omega^{i} \rvert \mathcal{T}_{\omega}^{i+1}) \cdot \mathbb{E}_\omega(M_j^2 \circ f_\omega^{j} \rvert \mathcal{T}_{\omega}^{j+1})
 \, d\mu_\omega - \mathbb{E}_\omega (M_i^2 \circ f_\omega^{i}) \cdot  \mathbb{E}_\omega (M_j^2 \circ f_\omega^{j}) \\
 &\le K\rho^{j-i+1} \bigg{\lVert} \frac{\mathcal L_{\sigma^j \omega} (M_j^2 h_{\sigma^j \omega})}{h_{\sigma^{j+1} \omega}}\bigg{\rVert}_\infty \cdot \lVert M_i^2\rVert_{var}.
\end{align*}
Furthermore,
\[
 \int \mathbb{E}_\omega (M_k^2 \circ f_\omega^{k} \rvert \mathcal{T}_{\omega}^{k+1})^2 \, d\mu_\omega \le \lVert \mathbb{E}_\omega (M_k^2 \circ f_\omega^{k} \rvert \mathcal{T}_{\omega}^{k+1})
 \rVert_\infty \cdot \mathbb{E}_\omega (M_k^2 \circ f_\omega^{k} ).
\]
Thus, the last two inequalities combined with \eqref{eq:SprindzucEstimate} imply that
\begin{align*}
  & \int \bigg{[}\sum_{m<k\le n}\mathbb{E}_\omega (M_k^2 \circ f_\omega^{k} \rvert \mathcal{T}_{\omega}^{k+1}) -\sum_{m<k\le n}\mathbb{E}_\omega (M_k^2 \circ f_\omega^{k})
  \bigg{]}^2 \, d\mu_\omega \displaybreak[0] \\
  &\le \sum_{m<k \le n} \int \mathbb{E}_\omega (M_k^2 \circ f_\omega^{k} \rvert \mathcal{T}_{\omega}^{k+1})^2 \, d\mu_\omega \displaybreak[0] \\
 &\phantom{\le} +2 \sum_{m< i<j\le n} \int \mathbb{E}_\omega(M_i^2 \circ f_\omega^{i} \rvert \mathcal{T}_{\omega}^{i+1}) \cdot \mathbb{E}_\omega(M_j^2 \circ f_\omega^{j} \rvert \mathcal{T}_{\omega}^{j+1})
 \, d\mu_\omega \displaybreak[0] \\
 &\phantom{\le}-2\sum_{m< i<j\le n}\mathbb{E}_\omega (M_i^2 \circ f_\omega^{i}) \cdot  \mathbb{E}_\omega (M_j^2 \circ f_\omega^{j}).
 \displaybreak[0] \\
 &\le \sum_{m<k \le n}\lVert \mathbb{E}_\omega (M_k^2 \circ f_\omega^{k} \rvert \mathcal{T}_{\omega}^{k+1})
 \rVert_\infty \cdot \mathbb{E}_\omega (M_k^2 \circ f_\omega^{k} ) \displaybreak[0] \\
 &\phantom{\le}+2K\sum_{m< i<j\le n}\rho^{j-i+1} \bigg{\lVert} \frac{\mathcal L_{\sigma^j \omega} (M_j^2 h_{\sigma^j \omega})}{h_{\sigma^{j+1} \omega}}\bigg{\rVert}_\infty \cdot \lVert M_i^2\rVert_{var},
\end{align*}
which combined with~(H\ref{ULY}), \eqref{h}, \eqref{h2} and Lemmas~\ref{Mn} and~\ref{j} implies that~\eqref{gk} holds with
\[
 h_k=\mathbb{E}_\omega (M_k^2 \circ f_\omega^{k} )+\lVert M_k^2\rVert_{var}.
\]
The conclusion of the lemma now follows directly from Theorem~\ref{GalKosma}.
\end{proof}

\section{Proof of the main theorem }\label{MR}
The goal of this section is to establish the almost sure invariance principle by proving Theorem \ref{main}. It is based on the following
 theorem due to Cuny and Merlev\`{e}de.
\begin{theorem}[\cite{CM}]\label{CMT}
 Let $(X_n)_n$ be a sequence of square integrable random variables adapted to a non-increasing filtration $(\mathcal G_n)_n$. Assume that $\mathbb E(X_n\rvert \mathcal G_{n+1})=0$ a.s.,
\begin{equation}\label{varn}
v_n^2:=\sum_{k=1}^n \mathbb E(X_k^2) \to \infty \quad  \text{when $n\to \infty$}
\end{equation}
 and that $\sup_n \mathbb E(X_n^2) <\infty$. Moreover, let $(a_n)_n$ be a non-decreasing sequence of positive numbers
 such that the sequence $(a_n/v_n^2)_n$ is non-increasing, $(a_n/v_n)$ is non-decreasing and such that
:
\begin{enumerate}
 \item \begin{equation}\label{CMT1}
        \sum_{k=1}^n (\mathbb E(X_k^2\rvert \mathcal G_{k+1})-\mathbb E(X_k^2))=o(a_n) \quad a.s.;
       \end{equation}
\item \begin{equation}\label{CMT2}
       \sum_{n\ge 1} a_n^{-v}\mathbb E(\lvert X_n\rvert^{2v}) <\infty \quad \text{for some $1\le v\le 2$.}
      \end{equation}

\end{enumerate}
Then,  enlarging our probability space if necessary, it is possible to find a sequence $(Z_k)_k$ of independent centered (i.e. of  zero mean) Gaussian variables with $\mathbb E(X_k^2)=\mathbb E(Z_k^2)$
such that
\[
 \sup_{1\le k\le n}\bigg{\lvert} \sum_{i=1}^k X_i-\sum_{i=1}^k Z_i \bigg{\rvert}=o((a_n (\lvert \log (v_n^2/a_n) \rvert +\log \log a_n))^{1/2}), \quad a.s.
\]
\end{theorem}
{\em Notations}: in what follows, we write $a_n \sim b_n$ if there exists $c \in \mathbb R \setminus \{ 0\}$  such that $\lim_{n\to \infty}a_n/b_n = c$.
\begin{proof}[Proof of Theorem~\ref{main}]
Part (i) will be proved in Proposition 3 below; we now show part (ii). Let us first suppose that by using Theorem~\ref{CMT}, we could obtain the almost sure invariance principle for the sequence $(X_k)_k=(M_k \circ f_\omega^k)_k$.
Combining this with Lemma \ref{GN}, the almost sure invariance principle
for the sequence $(\tilde \psi_{\theta^k \omega} \circ f_\omega^k)_k$, stated in Theorem~\ref{main}, follows since~\eqref{MD} implies that
$$
 \sum_{k=0}^{n-1} X_k=\sum_{k=0}^{n-1} \tilde \psi_{\sigma^k \omega} \circ f_\omega^k -G_n \circ f_\omega^n.
$$
We are now left with the proof of the ASIP
for for \[ X_n=M_n  \circ f_\omega^n \quad \text{and} \quad  \mathcal G_n=\mathcal T_\omega^n, \] and we will apply   Theorem~\ref{CMT} directly to these quantities.
We note that it follows from Lemma~\ref{cru} that
\[
 \sum_{k=1}^n (\mathbb E(X_k^2\rvert \mathcal G_{k+1})-\mathbb E(X_k^2))=O(b_n),
\]
 with
\begin{equation}\label{an}
 b_n=\Theta^{1/2}(n)\log^{3/2 +\epsilon}\Theta(n),
\end{equation}
for any positive $\epsilon$ and where $\Theta(n)$ is given by~\eqref{Theta}.  On the other hand, it follows from Lemma~\ref{Mn} that $\Theta (n) \le Dn$ for some $D>0$ and every $n\in \mathbb N.$ The last part of this section will be devoted to prove that in our case \[v_n^2=\sum_{k=0}^{n-1} \mathbb E_\omega (M_k^2\circ f_\omega^k) \sim n\Sigma^2,\] where $\Sigma^2$, whose existence is ensured by Lemma \ref{var}, is assumed strictly positive in part (ii) of this theorem. We now put
$$
a_n=v_n\log^{\epsilon'}(v_n^2), \ \epsilon'\ge \frac32+\epsilon
$$
In this way and noticing that $v_n$ is increasing,  the monotonicity assumption on $a_n/v_n$ is immediately satisfied. To deal with the other condition   $\frac{a_n}{v_n^2}=\frac{\log^{\epsilon'}(v_n^2)}{v_n},$ we observe that the function $x\rightarrow \frac{\log^{\epsilon'}(x^2)}{x}, x>0$ has negative derivative for $x$ large enough depending on the value of $\epsilon'$. This implies in our situation that the monotonicity of  $\frac{a_n}{v_n^2}$ is ensured for $n$ large enough. To deal with the (finitely many)  lower values of $n$ we use Remark 2.4 in \cite{CM}, which asserts that the condition on $\frac{a_n}{v_n^2}$ can be replaced with the following one: there exists a constant $\tilde{C}$ such that for any $n\ge 1$, $\sup_{k\ge 1}(\frac{a_k}{v_k^2})\le \tilde{C} \frac{a_n}{v_n^2}:$ the easy details are left to the reader.
We have now to show that~\eqref{CMT2} holds with $v=2$.

 Since $\sup_n \lVert M_n\rVert_\infty <\infty$, we have that $\sup_n \lVert X_n\rVert_\infty <\infty$ and thus
 \[
    \sum_{n\ge 1}a_n^{-2} \mathbb E_\omega (\lvert X_n\rvert^4) \le C \sum_{n\ge 2}a_n^{-2}\sim C\sum_{n\ge 2} \frac{1}{n \log^{2\epsilon'}n}<\infty,
\]
since $2\epsilon'>1.$
We finally notice that with our choice of $a_n$ and by renaming $\epsilon$ as $\frac{1+\epsilon}{2}$,  we get the error term claimed in (\ref{sca}).

 \end{proof}
 As we said above, in the last part of this section we will prove the linear growth of the variance.
\begin{lemma}\label{var}
 There exists $\Sigma^2\ge 0$ such that
 \begin{equation}\label{variance}
  \lim_{n\to \infty} \frac 1 n \mathbb E_\omega \bigg{(}\sum_{k=0}^{n-1} \tilde \psi_{\sigma^k \omega} \circ f_\omega^k \bigg{)}^2=\Sigma^2, \quad \text{for a.e. $\omega \in \Omega$.}
 \end{equation}

\end{lemma}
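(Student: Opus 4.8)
The plan is to compare the Birkhoff sum with the reverse‑martingale sum built from the $M_k$, and then recognise the resulting variance as a Birkhoff average over the base $(\Omega,\mathbb P,\sigma)$. First, from the definition \eqref{MD} of $M_n$, the relation $f_{\sigma^n\omega}\circ f_\omega^n=f_\omega^{n+1}$ and $G_0=0$, one telescopes
\[
\sum_{k=0}^{n-1}\tilde\psi_{\sigma^k\omega}\circ f_\omega^k=\sum_{k=0}^{n-1}M_k\circ f_\omega^k+G_n\circ f_\omega^n ,
\]
and by Lemma \ref{GN} and (V3) the term $G_n\circ f_\omega^n$ is bounded in $L^\infty$ uniformly in $n$ and $\omega$. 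By Proposition \ref{lem}, $(M_k\circ f_\omega^k)_k$ is a reverse martingale difference sequence for the non‑increasing filtration $(\mathcal{T}_{\omega}^k)_k$, and each $M_k\circ f_\omega^k$ is $\mathcal{T}_{\omega}^k$‑measurable (the only term needing care, $(G_{k+1}\circ f_{\sigma^k\omega})\circ f_\omega^k=G_{k+1}\circ f_\omega^{k+1}$, is even $\mathcal{T}_{\omega}^{k+1}$‑measurable). Since $\mathcal{T}_{\omega}^j\subseteq\mathcal{T}_{\omega}^{i+1}$ whenever $i<j$, conditioning on $\mathcal{T}_{\omega}^{i+1}$ annihilates all cross terms, so
\[
\mathbb E_\omega\Big(\sum_{k=0}^{n-1}M_k\circ f_\omega^k\Big)^2=\sum_{k=0}^{n-1}\mathbb E_\omega(M_k^2\circ f_\omega^k)=\sum_{k=0}^{n-1}\int M_k^2\,d\mu_{\sigma^k\omega},
\]
using $(f_\omega^k)_*\mu_\omega=\mu_{\sigma^k\omega}$. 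Squaring the telescoped identity and applying Cauchy--Schwarz, the $G_n$‑cross term is $O(n^{1/2})$ (since $\|\sum_{k<n}M_k\circ f_\omega^k\|_{L^2(\mu_\omega)}^2=O(n)$ by Lemma \ref{Mn}) and the $(G_n\circ f_\omega^n)^2$ term is $O(1)$; hence it suffices to prove that $\tfrac1n\sum_{k=0}^{n-1}\int M_k^2\,d\mu_{\sigma^k\omega}$ converges for a.e.\ $\omega$.

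The core of the argument is to identify $\int M_k^2\,d\mu_{\sigma^k\omega}$, up to a uniformly vanishing error, with a fixed bounded function of $\sigma^k\omega$. Re‑indexing the closed form \eqref{marterror} shows that $G_k=\bar G_k(\sigma^k\omega)$, where $\bar G_k(\omega'):=h_{\omega'}^{-1}\sum_{i=1}^{k}\mathcal L_{\sigma^{-i}\omega'}^{(i)}(\tilde\psi_{\sigma^{-i}\omega'}h_{\sigma^{-i}\omega'})$. Since $\int\tilde\psi_{\sigma^{-i}\omega'}h_{\sigma^{-i}\omega'}\,dm=0$, condition (H\ref{DEC}) together with \eqref{varp}, \eqref{h}, \eqref{obs} bounds the $BV$‑norm of the $i$‑th summand by $K'e^{-\lambda i}$, uniformly in $\omega'$; combined with the uniform lower bound $\esinf h_{\omega'}\ge c/2$ from \eqref{h2} and (V3), the partial sums $\bar G_k(\omega')$ converge in $\|\cdot\|_\infty$, uniformly in $\omega'$, to $\bar G(\omega'):=h_{\omega'}^{-1}\sum_{i\ge1}\mathcal L_{\sigma^{-i}\omega'}^{(i)}(\tilde\psi_{\sigma^{-i}\omega'}h_{\sigma^{-i}\omega'})$, which is $\mathbb P\times m$‑measurable by (H\ref{H1}). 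Setting $\bar M(\omega,x):=\tilde\psi_\omega(x)+\bar G(\omega)(x)-\bar G(\sigma\omega)(f_\omega(x))$, one gets $\|M_k-\bar M(\sigma^k\omega,\cdot)\|_\infty\to0$ as $k\to\infty$, uniformly in $\omega$ (composition with $f_{\sigma^k\omega}$ does not increase the sup‑norm). Since $M_k$ and $\bar M(\sigma^k\omega,\cdot)$ are uniformly bounded in $L^\infty$ (Lemma \ref{Mn}, (V3)), it follows that $\int M_k^2\,d\mu_{\sigma^k\omega}=W(\sigma^k\omega)+o(1)$, uniformly in $\omega$, where $W(\omega):=\int\bar M(\omega,\cdot)^2\,d\mu_\omega$.

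Finally, $W$ is bounded and measurable, hence $W\in L^1(\Omega,\mathbb P)$, and Birkhoff's ergodic theorem for the ergodic $\mathbb P$‑preserving $\sigma$ gives $\tfrac1n\sum_{k=0}^{n-1}W(\sigma^k\omega)\to\Sigma^2:=\int_\Omega W\,d\mathbb P$ for a.e.\ $\omega$. Adding the uniform $o(1)$ term (whose Cesàro average also vanishes) yields $\tfrac1n\sum_{k=0}^{n-1}\int M_k^2\,d\mu_{\sigma^k\omega}\to\Sigma^2$, and so, by the first paragraph, $\tfrac1n\mathbb E_\omega\big(\sum_{k=0}^{n-1}\tilde\psi_{\sigma^k\omega}\circ f_\omega^k\big)^2\to\Sigma^2$ for a.e.\ $\omega$, with $\Sigma^2\ge0$ immediate. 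I expect the main obstacle to be this third step: verifying \emph{uniformly in $\omega$} that the non‑stationary corrections $G_k$ stabilise to the stationary cocycle $\bar G\circ\sigma^k$. This is precisely where the equivariant sample measures $\mu_\omega$ and the noise‑independent exponential decay (H\ref{DEC}), reinforced by the uniform density lower bound \eqref{h2}, do the work that is unavailable for sequential systems.
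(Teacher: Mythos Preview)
Your argument is correct, but it follows a genuinely different route from the paper's. The paper expands the square of the Birkhoff sum directly into diagonal and off-diagonal correlation terms, applies Birkhoff's ergodic theorem to the function $\omega\mapsto\int\tilde\psi_\omega^2\,d\mu_\omega$ for the diagonal part, introduces the correlation series $\Psi(\omega)=\sum_{n\ge1}\int\tilde\psi_\omega(\tilde\psi_{\sigma^n\omega}\circ f_\omega^n)\,d\mu_\omega$ (finite by \eqref{buzzi}) and again applies Birkhoff on the base for the off-diagonal part, controlling the tail truncation by another appeal to \eqref{buzzi}. This yields the explicit Green--Kubo formula
\[
\Sigma^2=\int_{\Omega\times X}\tilde\psi^2\,d\mu+2\sum_{n\ge1}\int_{\Omega\times X}\tilde\psi\,(\tilde\psi\circ\tau^n)\,d\mu,
\]
which is then used in the subsequent proposition characterising $\Sigma^2=0$ via the coboundary equation. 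Your approach instead passes through the martingale decomposition, uses orthogonality of the $M_k\circ f_\omega^k$ (this is the paper's Lemma~\ref{xij}), and then proves that the non-stationary coboundaries $G_k$ stabilise uniformly to a stationary cocycle $\bar G(\sigma^k\omega)$, so that $\int M_k^2\,d\mu_{\sigma^k\omega}=W(\sigma^k\omega)+o(1)$ and a single application of Birkhoff on $\Omega$ finishes. Your route meshes more naturally with the reverse-martingale machinery already set up in Section~3 and delivers $\Sigma^2=\int_{\Omega\times X}\bar M^2\,d\mu$, which is manifestly nonnegative; the paper's route, on the other hand, produces the Green--Kubo representation needed downstream. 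The stabilisation step you flag as the main obstacle is indeed the new ingredient relative to the paper, and your justification via (H\ref{DEC}), \eqref{h}, \eqref{h2}, \eqref{varp} and \eqref{obs} is sound.
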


\begin{proof}
 Note that
 \[
 \begin{split}
  \mathbb E_\omega \bigg{(}\sum_{k=0}^{n-1} \tilde \psi_{\sigma^k \omega} \circ f_\omega^k \bigg{)}^2 &=
  \sum_{k=0}^{n-1} \mathbb E_\omega (\tilde \psi_{\sigma^k \omega}^2\circ f_\omega^k )+2\sum_{0\le i<j\le n-1}\mathbb E_\omega ((\tilde \psi_{\sigma^i \omega} \circ f_\omega^i)
  (\tilde \psi_{\sigma^j \omega} \circ f_\omega^j)) \\
&=\sum_{k=0}^{n-1} \mathbb E_\omega (\tilde \psi_{\sigma^k \omega}^2\circ f_\omega^k )
+2\sum_{i=0}^{n-1}\sum_{j=i+1}^{n-1}\mathbb E_{\sigma^i \omega}(\tilde \psi_{\sigma^i \omega}(\tilde \psi_{\sigma^j \omega}\circ f_{\sigma^i \omega}^{j-i})).
  \end{split}
 \]
Set $g(\omega)=\mathbb E_\omega (\tilde \psi_\omega^2)$, $\omega \in \Omega$. By applying the Birkhoff's ergodic theorem for $g$ over the ergodic measure-preserving system $(\Omega, \mathcal F, \mathbb P, \sigma)$, we find that
\[
\begin{split}
\lim_{n\to \infty} \frac 1 n  \sum_{k=0}^{n-1}  \mathbb E_\omega &(\tilde \psi_{\sigma^k \omega}^2\circ f_\omega^k ) =\lim_{n\to \infty} \frac 1n \sum_{k=0}^{n-1} g(\sigma^k \omega)=\int_\Omega g(\omega)\, d\mathbb P(\omega) \\
&=\int_{\Omega} \int_X \tilde \psi(\omega, x)^2 \, d\mu_\omega(x) \, d\mathbb P(\omega)
=\int_{\Omega \times X}\tilde \psi(\omega, x)^2\, d\mu (\omega, x),
\end{split}
\]
for a.e. $\omega \in \Omega$.
Furthermore, set
\[
\Psi(\omega)=\sum_{n=1}^\infty \int_X  \tilde \psi(\omega, x)\tilde \psi(\tau^n (\omega, x))\, d\mu_\omega(x)=\sum_{n=1}^\infty \int_X\mathcal L_\omega^n(\tilde \psi_\omega h_\omega)\tilde \psi_{\sigma^n \omega}\, dm.
\]
By~\eqref{buzzi} and~\eqref{obs}, we have
\[
\lvert \Psi(\omega)\rvert  \le \sum_{n=1}^\infty \bigg{ \lvert} \int_X\mathcal L_\omega^{(n)}(\tilde \psi_\omega h_\omega)\tilde \psi_{\sigma^n \omega}\, dm \bigg{\rvert} \le \tilde K\sum_{n=1}^\infty \rho^n=\frac{\tilde K \rho}{1-\rho},
\]
for some $\tilde K>0$ and a.e. $\omega \in \Omega$. In particular, $\Psi \in L^1(\Omega)$ and thus it follows again from Birkhoff's ergodic theorem that
\begin{equation}\label{DD}
\lim_{n\to \infty}\frac{1}{n} \sum_{i=0}^{n-1}\Psi(\sigma^i \omega)=\int_\Omega \Psi(\omega)\, d\mathbb P(\omega)=\sum_{n=1}^\infty \int_{ \Omega \times X}\tilde \psi (\omega, x)\tilde
\psi (\tau^n(\omega, x))\, d\mu (\omega, x),
\end{equation}
for a.e. $\omega \in \Omega$. In order to complete the proof of the lemma, we are going to show that
\begin{equation}\label{cv}
\lim_{n\to \infty} \frac 1 n \bigg{(}\sum_{i=0}^{n-1}\sum_{j=i+1}^{n-1}\mathbb E_{\sigma^i \omega}(\tilde \psi_{\sigma^i \omega}(\tilde \psi_{\sigma^j \omega}\circ f_{\sigma^i \omega}^{j-i}))-\sum_{i=0}^{n-1}\Psi(\sigma^i \omega)\bigg{)}=0,
\end{equation}
for a.e. $\omega \in \Omega$. Using~\eqref{buzzi}, we have  that for a.e. $\omega  \in \Omega$,
\begin{align*}
& \bigg{\lvert} \sum_{i=0}^{n-1}\sum_{j=i+1}^{n-1}\mathbb E_{\sigma^i \omega}(\tilde \psi_{\sigma^i \omega}(\tilde \psi_{\sigma^j \omega}\circ f_{\sigma^i \omega}^{j-i}))-\sum_{i=0}^{n-1}\Psi(\sigma^i \omega) \bigg{\rvert} \displaybreak[0] \\
&=\bigg{\lvert} \sum_{i=0}^{n-1}\sum_{j=i+1}^{n-1}\mathbb E_{\sigma^i \omega}(\tilde \psi_{\sigma^i \omega}(\tilde \psi_{\sigma^j \omega}\circ f_{\sigma^i \omega}^{j-i}))-\sum_{i=0}^{n-1}\sum_{k=1}^\infty \mathbb E_{\sigma^i \omega}(\tilde \psi_{\sigma^i \omega}(
\tilde \psi_{\sigma^{k+i}\omega} \circ f_{\sigma^i \omega}^k)) \bigg{\rvert} \displaybreak[0] \\
&\le \sum_{i=0}^{n-1}\sum_{k=n-i}^\infty  \bigg{\lvert} \mathbb E_{\sigma^i \omega}(\tilde \psi_{\sigma^i \omega}(
\tilde \psi_{\sigma^{k+i}\omega} \circ f_{\sigma^i \omega}^k)) \bigg{\rvert}  =\sum_{i=0}^{n-1}\sum_{k=n-i}^\infty  \bigg{\lvert}\int_X \mathcal L_{\sigma^i \omega}^{(k)}(\tilde \psi_{\sigma^i \omega} h_{ \sigma^i \omega}) \tilde \psi_{\sigma^{k+i} \omega}\, dm \bigg{\rvert} \\
& \le \tilde K \sum_{i=0}^{n-1}\sum_{k=n-i}^\infty \rho^k=\tilde K \frac{\rho}{(1-\rho)^2},
\end{align*}
which readily implies~\eqref{cv}. It follows from~\eqref{DD} and~\eqref{cv} that
\[
\lim_{n \to \infty} \frac 1 n \sum_{i=0}^{n-1}\sum_{j=i+1}^{n-1}\mathbb E_{\sigma^i \omega}(\tilde \psi_{\sigma^i \omega}(\tilde \psi_{\sigma^j \omega}\circ f_{\sigma^i \omega}^{j-i}))=\sum_{n=1}^\infty \int_{ \Omega \times X}\tilde \psi (\omega, x)\tilde
\psi (\tau^n(\omega, x))\, d\mu (\omega, x)
\]
for a.e. $\omega \in \Omega$ and therefore~\eqref{variance} holds with
\begin{equation}\label{var2}
\Sigma^2=\int_{\Omega \times X}\tilde \psi(\omega, x)^2\, d\mu (\omega, x)+2\sum_{n=1}^\infty \int_{ \Omega \times X}\tilde \psi (\omega, x)\tilde
\psi (\tau^n(\omega, x))\, d\mu (\omega, x).
\end{equation}
Finally, we note that it follows readily from~\eqref{variance} that  $\Sigma^2\ge 0$ and the proof of the lemma is completed.
\end{proof}
We now present necessary and sufficient conditions under which $\Sigma^2=0$. We note that   a similar  result is  stated in~\cite[(2.10)]{kifer} with $\tilde \psi \circ \tau$ instead of $\tilde \psi$ in~\eqref{cohom}.
\begin{proposition}
 We have that $\Sigma^2=0$ if and only if there exists $\phi \in L^2 (\Omega \times X)$ such that
 \begin{equation}\label{cohom2}
  \tilde \psi=\phi-\phi \circ \tau.
 \end{equation}

\end{proposition}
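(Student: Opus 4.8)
The plan is to realise the full martingale--coboundary decomposition of $\tilde\psi$, to identify $\Sigma^2$ with the $L^2(\mu)$-norm of its martingale part, and then to read off both implications. The object playing the role of the ``limit'' of the $G_n$'s (cf. \eqref{MD}, \eqref{rec}) will be the $BV$-valued cocycle
\[
g_\omega:=\frac{1}{h_\omega}\sum_{i=1}^\infty \mathcal L_{\sigma^{-i}\omega}^{(i)}\big(\tilde\psi_{\sigma^{-i}\omega}h_{\sigma^{-i}\omega}\big),\qquad \omega\in\Omega,
\]
and I set $\hat\psi_\omega:=\tilde\psi_\omega+g_\omega-g_{\sigma\omega}\circ f_\omega$.

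The easy implication first. If $\tilde\psi=\phi-\phi\circ\tau$ with $\phi\in L^2(\Omega\times X)$, write $\phi_\omega=\phi(\omega,\cdot)$; the Birkhoff sum telescopes, $\sum_{k=0}^{n-1}\tilde\psi_{\sigma^k\omega}\circ f_\omega^k=\phi_\omega-\phi_{\sigma^n\omega}\circ f_\omega^n$, so by $(a-b)^2\le2a^2+2b^2$ and the equivariance $(f_\omega^n)_*\mu_\omega=\mu_{\sigma^n\omega}$,
\[
\mathbb E_\omega\Big(\sum_{k=0}^{n-1}\tilde\psi_{\sigma^k\omega}\circ f_\omega^k\Big)^2\le 2G(\omega)+2G(\sigma^n\omega),\qquad G(\omega):=\int_X\phi_\omega^2\,d\mu_\omega.
\]
Since $\int_\Omega G\,d\mathbb P=\|\phi\|_{L^2(\mu)}^2<\infty$ we have $G\in L^1(\mathbb P)$, whence $G(\omega)/n\to0$ trivially and $G(\sigma^n\omega)/n\to0$ $\mathbb P$-a.e.\ (a standard consequence of Birkhoff's theorem for $G$). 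Comparing with Lemma~\ref{var} gives $\Sigma^2=0$.

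For the converse, one first checks that the series defining $g_\omega$ converges in $BV$: each summand has $\int\tilde\psi_{\sigma^{-i}\omega}h_{\sigma^{-i}\omega}\,dm=0$, so by (H\ref{DEC}) its $BV$-norm is $\le Ke^{-\lambda i}\|\tilde\psi_{\sigma^{-i}\omega}h_{\sigma^{-i}\omega}\|_{BV}$, which is uniformly bounded by \eqref{varp}, \eqref{h} and \eqref{obs}; dividing by $h_\omega\ge c/2$ (Lemma~\ref{LG}) and using (V8) and \eqref{varp} yields $\essup_{\omega}\|g_\omega\|_{BV}<\infty$, so $g\in L^\infty(\mathbb P\times m)\subset L^2(\Omega\times X)$ (with measurability following as in (H\ref{H1})). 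A term-by-term reindexing of the series gives the cocycle identity $\mathcal L_\omega\big((\tilde\psi_\omega+g_\omega)h_\omega\big)=g_{\sigma\omega}h_{\sigma\omega}$ for a.e.\ $\omega$; combined with Lemma~\ref{TO} and $\mathcal L_\omega h_\omega=h_{\sigma\omega}$ this gives $\mathcal L_\omega(\hat\psi_\omega h_\omega)=0$, so by Lemma~\ref{lem:CE} the sequence $(\hat\psi_{\sigma^k\omega}\circ f_\omega^k)_k$ is a reversed martingale difference for $(\mathcal T_\omega^{k+1})_k$ with respect to $\mu_\omega$, in particular pairwise orthogonal in $L^2(\mu_\omega)$. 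Telescoping the identity $\tilde\psi_\omega=\hat\psi_\omega-g_\omega+g_{\sigma\omega}\circ f_\omega$ along the orbit gives
\[
\sum_{k=0}^{n-1}\tilde\psi_{\sigma^k\omega}\circ f_\omega^k=\sum_{k=0}^{n-1}\hat\psi_{\sigma^k\omega}\circ f_\omega^k+\big(g_{\sigma^n\omega}\circ f_\omega^n-g_\omega\big).
\]
Here the bracketed term is bounded in $L^2(\mu_\omega)$ since $g$ is bounded, while orthogonality plus equivariance give $\mathbb E_\omega(\sum_{k=0}^{n-1}\hat\psi_{\sigma^k\omega}\circ f_\omega^k)^2=\sum_{k=0}^{n-1}\int_X\hat\psi(\tau^k(\omega,x))^2\,d\mu_\omega(x)$, which by Birkhoff's theorem for the ergodic $\tau$ (descended from $(\Omega\times X,\mu)$ to the fibers, then dominated convergence, using that $\hat\psi$ is bounded) equals $n\int_{\Omega\times X}\hat\psi^2\,d\mu+o(n)$ for a.e.\ $\omega$. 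Squaring the display, bounding the cross term by Cauchy--Schwarz, dividing by $n$ and invoking Lemma~\ref{var} yields $\Sigma^2=\int_{\Omega\times X}\hat\psi^2\,d\mu$. Hence $\Sigma^2=0$ forces $\hat\psi=0$ $\mu$-a.e., and therefore $\mathbb P\times m$-a.e.\ because $\esinf h_\omega\ge c/2>0$; that is, $\tilde\psi=g\circ\tau-g$, so $\phi:=-g\in L^2(\Omega\times X)$ satisfies \eqref{cohom2}.

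I expect the main obstacle to be setting up $g$ rigorously: convergence of the series in the fibered $BV$-space, measurability of $(\omega,x)\mapsto g_\omega(x)$, the uniform norm bound (which needs (V8), \eqref{varp}, \eqref{h} and Lemma~\ref{LG}), and above all verifying the \emph{exact} identity $\mathcal L_\omega\big((\tilde\psi_\omega+g_\omega)h_\omega\big)=g_{\sigma\omega}h_{\sigma\omega}$, which is precisely what upgrades $\hat\psi$ from an approximate to a genuine reversed martingale difference. The remaining bookkeeping --- interchanging $\mu$, $\mu_\omega$ and $\mathbb P\times m$, descending the a.e.\ convergence in Birkhoff's theorem to the fibers, and the two elementary facts about $G(\sigma^n\omega)/n$ --- is routine.
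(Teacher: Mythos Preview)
Your proof is correct, and it takes a genuinely different route from the paper's.

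For the implication $\Sigma^2=0\Rightarrow$ coboundary, the paper does not construct the solution: it shows, via the explicit formula~\eqref{var2} and the exponential decay~\eqref{buzzi}, that the partial sums $\sum_{k=0}^{n-1}\tilde\psi\circ\tau^k$ are bounded in $L^2(\Omega\times X,\mu)$, extracts a weakly convergent subsequence, and verifies that the weak limit $\phi$ satisfies $\tilde\psi=\phi-\phi\circ\tau$ by testing against indicator functions and using~\eqref{buzzi} once more. Your argument instead builds the solution explicitly as the ``stationary'' limit of the $G_n$'s: the series $g_\omega=h_\omega^{-1}\sum_{i\ge1}\mathcal L_{\sigma^{-i}\omega}^{(i)}(\tilde\psi_{\sigma^{-i}\omega}h_{\sigma^{-i}\omega})$ converges in $BV$ by~(H\ref{DEC}), and the cocycle identity $\mathcal L_\omega((\tilde\psi_\omega+g_\omega)h_\omega)=g_{\sigma\omega}h_{\sigma\omega}$ (which you can verify by the reindexing you indicate) yields a genuine reversed martingale difference $\hat\psi$. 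The Birkhoff/orthogonality computation then gives the clean identity $\Sigma^2=\int_{\Omega\times X}\hat\psi^2\,d\mu$, from which the implication is immediate. Your approach exploits the invertibility of $\sigma$ (assumed in the paper) and Lemma~\ref{LG} in an essential way, and delivers more than the paper's soft argument: the coboundary $\phi=-g$ is not merely in $L^2$ but fiberwise bounded in $BV$, and the formula $\Sigma^2=\lVert\hat\psi\rVert_{L^2(\mu)}^2$ is a useful byproduct. The paper's weak-compactness argument, on the other hand, is shorter and would survive in settings where no pointwise lower bound on $h_\omega$ is available.

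For the easy implication you also argue fiberwise (using $G(\sigma^n\omega)/n\to0$ for $G\in L^1(\mathbb P)$), whereas the paper simply integrates over $\Omega$ first and reads off $\Sigma^2=\lim_n\lVert n^{-1/2}\sum_{k<n}\tilde\psi\circ\tau^k\rVert_{L^2(\mu)}^2=0$. Both are routine.
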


\begin{proof}
 We first observe that
\[
 \begin{split}
  \int_{\Omega \times X}& \bigg{(}\sum_{k=0}^{n-1} \tilde \psi (\tau^k (\omega, x)) \bigg{)}^2\, d\mu(\omega, x) \\
&  =\sum_{k=0}^{n-1} \int_{\Omega \times X} \tilde \psi^2(\tau^k(\omega, x))
  \, d\mu(\omega, x)
+2\sum_{k=1}^{n-1} \sum_{j=0}^{k-1} \int_{\Omega \times X}\tilde \psi (\tau^k (\omega, x) \tilde \psi (\tau^j (\omega, x))\, d\mu(\omega, x) \\
  &=n\int_{\Omega \times X} \tilde \psi^2(\omega, x)\, d\mu(\omega, x)
  +2\sum_{k=1}^{n-1} \sum_{j=0}^{k-1}\int_{\Omega \times X}\tilde \psi(\omega, x)\tilde \psi (\tau^{k-j}(\omega, x))\, d\mu(\omega, x) \\
  &=n\int_{\Omega \times X} \tilde \psi^2(\omega, x)\, d\mu(\omega, x)
+2\sum_{k=1}^{n-1}(n-k)\int_{\Omega \times X} \tilde \psi(\omega, x)\tilde \psi (\tau^k (\omega, x))\, d\mu(\omega, x),
 \end{split}
\]
and thus
\begin{align*}
 & \int_{\Omega \times X}\bigg{(}\sum_{k=0}^{n-1} \tilde \psi (\tau^k (\omega, x)) \bigg{)}^2\, d\mu(\omega, x)= \displaybreak[0] \\
 &= n\bigg{(}\int_{\Omega \times X} \tilde \psi^2(\omega, x)\, d\mu(\omega, x)
  +2\sum_{k=1}^{n-1}\int_{\Omega \times X} \tilde \psi(\omega, x)\tilde \psi (\tau^k (\omega, x))\, d\mu(\omega, x) \bigg{)}\displaybreak[0] \\
  &\phantom{=}-2\sum_{k=1}^{n-1} k\int_{\Omega \times X} \tilde \psi(\omega, x)\tilde \psi (\tau^k (\omega, x))\, d\mu(\omega, x).
\end{align*}
Assume now that $\Sigma^2=0$. Then, it follows from the above equality and~\eqref{var2} that
\begin{equation}\label{f1}
 \int_{\Omega \times X}\bigg{(}\sum_{k=0}^{n-1} \tilde \psi \circ \tau^k \bigg{)}^2\, d\mu=-2n\sum_{k=n}^\infty \int_{\Omega \times X}\tilde \psi (\tilde \psi \circ \tau^k)\, d\mu
 -2\sum_{k=1}^{n-1} k\int_{\Omega \times X} \tilde \psi (\tilde \psi \circ \tau^k)\, d\mu.
\end{equation}
On the other hand, by~\eqref{buzzi} we have that $\int_{\Omega \times X} \tilde \psi (\tilde \psi \circ \tau^k)\, d\mu \to 0$ exponentially fast when $k\to \infty$ and hence, it follows
from~\eqref{f1} that the sequence $(X_n)_n$ defined by
\[
 X_n(\omega, x)=\sum_{k=0}^{n-1} \tilde \psi (\tau^k(\omega, x)), \quad \omega \in \Omega, \ x\in X
\]
is bounded in $L^2(\Omega \times X)$. Thus, it has a subsequence $(X_{n_k})_k$ which converges weakly to some $\phi \in L^2 (\Omega \times X)$. We claim that $\phi$ satisfies~\eqref{cohom2}.
Indeed, take an arbitrary $g=\mathbf{1}_{A\times B}$, where $A\in \mathcal F$ and $B\in \mathcal B$  and observe that $g\in L^2(\Omega \times X)$ and
\[\begin{split}
 \int_{\Omega \times X} g(\tilde \psi-\phi+\phi \circ \tau)&=\lim_{k\to \infty}\int_{\Omega \times X}g(\tilde \psi -X_{n_k} +X_{n_k}\circ \tau)\, d\mu \\
 &=\lim_{k\to \infty} \int_{\Omega \times X}g(\tilde \psi \circ \tau^{n_k})\, d\mu=0,
 \end{split}
\]
where in the last equality we used~\eqref{buzzi} again. Therefore, $\tilde \psi-\phi+\phi \circ \tau=0$ which readily implies~\eqref{cohom2}.

Suppose now that there exists $\phi \in L^2(\Omega \times X)$ satisfying~\eqref{cohom2}. Then,
\[
 \frac{1}{\sqrt n} \sum_{k=0}^{n-1} \tilde \psi \circ \tau^k=\frac{1}{\sqrt n} (\phi-\phi \circ \tau^n),
\]
and thus
\[
 \bigg{\lVert}  \frac{1}{\sqrt n} \sum_{k=0}^{n-1} \tilde \psi \circ \tau^k\bigg{\rVert}_{L^2(\Omega \times X)} \le \frac{2}{\sqrt{n}}\lVert \phi\rVert_{L^2(\Omega \times X)} \to 0,
\]
when $n\to \infty$. Therefore, it follows by integrating~\eqref{variance} over $\Omega$ that
\[
 \Sigma^2=\lim_{n\to \infty} \bigg{\lVert}  \frac{1}{\sqrt n} \sum_{k=0}^{n-1} \tilde \psi \circ \tau^k\bigg{\rVert}_{L^2(\Omega \times X)}^2=0.
\]
This concludes the proof of the proposition.
\end{proof}

In the rest of the paper we   assume that $\Sigma^2>0$.
We also need the following lemmas.

\begin{lemma}\label{xij}
 We have that
 \[
  \mathbb E_\omega (X_i X_j)=0, \quad \text{for $i<j$.}
 \]

\end{lemma}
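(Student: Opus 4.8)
The plan is to exploit the reversed martingale structure established in Proposition~\ref{lem}, together with the fact that the filtration $(\mathcal{T}_{\omega}^n)_n$ is non-increasing. First I would record this monotonicity: since $f_\omega^{n+1}=f_{\sigma^n\omega}\circ f_\omega^n$, we have $\mathcal{T}_{\omega}^{n+1}=(f_\omega^n)^{-1}\big(f_{\sigma^n\omega}^{-1}(\mathcal B)\big)\subseteq (f_\omega^n)^{-1}(\mathcal B)=\mathcal{T}_{\omega}^n$, so that $\mathcal{T}_{\omega}^0\supseteq \mathcal{T}_{\omega}^1\supseteq\cdots$. Moreover each $X_k=M_k\circ f_\omega^k$ is $\mathcal{T}_{\omega}^k$-measurable, and by Lemma~\ref{Mn} and (V3) it is bounded, so in particular $X_iX_j\in L^1(\mu_\omega)$ for all $i,j$.

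Now fix $i<j$. Since $j\ge i+1$ and the filtration is non-increasing, $\mathcal{T}_{\omega}^j\subseteq \mathcal{T}_{\omega}^{i+1}$, hence $X_j$ is $\mathcal{T}_{\omega}^{i+1}$-measurable. Using the tower property and pulling $X_j$ out of the conditional expectation,
\[
\mathbb E_\omega(X_iX_j)=\mathbb E_\omega\big(\mathbb E_\omega(X_iX_j \rvert \mathcal{T}_{\omega}^{i+1})\big)=\mathbb E_\omega\big(X_j\,\mathbb E_\omega(X_i \rvert \mathcal{T}_{\omega}^{i+1})\big).
\]
By Proposition~\ref{lem} applied with $n=i$ we have $\mathbb E_\omega(X_i \rvert \mathcal{T}_{\omega}^{i+1})=\mathbb E_\omega(M_i\circ f_\omega^i \rvert \mathcal{T}_{\omega}^{i+1})=0$, and the claim $\mathbb E_\omega(X_iX_j)=0$ follows.

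There is essentially no serious obstacle here: the only point that requires care is getting the direction of the inclusions among the $\sigma$-algebras $\mathcal{T}_{\omega}^n$ right, so that $X_j$ (the \emph{later} index, hence the \emph{coarser} one in the reversed filtration) is indeed measurable with respect to $\mathcal{T}_{\omega}^{i+1}$; once that is in place, the orthogonality is an immediate consequence of the reversed martingale-difference property. The integrability of the product $X_iX_j$ needed to justify the conditioning is guaranteed by the uniform bound $\sup_n\|M_n^2\|_{BV}<\infty$ from Lemma~\ref{Mn} together with the embedding $BV\hookrightarrow L^\infty$ from (V3).
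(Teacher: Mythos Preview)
Your proof is correct and follows essentially the same approach as the paper: use Proposition~\ref{lem} to get $\mathbb E_\omega(X_i\mid \mathcal T_\omega^{i+1})=0$, observe that $X_j$ is $\mathcal T_\omega^{i+1}$-measurable (so it can be pulled out of the conditional expectation), and then integrate. Your version is slightly more detailed in that you explicitly verify the monotonicity of the filtration and the integrability of $X_iX_j$, but the argument is the same.
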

\begin{proof}
By Lemma~\ref{lem}, we conclude  that
$
 \mathbb E_\omega (M_i \circ f_\omega^i \rvert \mathcal T_{\omega}^{i+1})=0
$.
Moreover, we note that $M_j \circ f_\omega^j$ is measurable with respect to $\mathcal T_{\omega}^{i+1}$ and thus
\[
 \mathbb E_\omega ((M_j \circ f_\omega^j)(M_i \circ f_\omega^i )\rvert \mathcal T_{\omega}^{i+1})=(M_j \circ f_\omega^j)\mathbb E_\omega (M_i \circ f_\omega^i \rvert \mathcal T_{\omega}^{i+1})=0,
\]
which immediately implies desired conclusion.
\end{proof}
 We now recall that $v_n^2$  is given by~\eqref{varn}.

\begin{lemma}\label{inf}
 We have that  $v_n^2  \to \infty$ as $n \to \infty$.
\end{lemma}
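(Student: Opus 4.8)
The plan is to compare the martingale variance $\sigma_n^2=\sum_k\mathbb{E}_\omega(X_k^2)$ with the Birkhoff variance $\tau_n^2=\mathbb{E}_\omega\big(\sum_{k=0}^{n-1}\tilde\psi_{\sigma^k\omega}\circ f_\omega^k\big)^2$, for which Lemma~\ref{var} gives $\tau_n^2/n\to\Sigma^2>0$ and hence $\tau_n\to\infty$ for a.e.\ $\omega$. The bridge between the two is the martingale--coboundary structure of \eqref{MD}: since $G_0=0$ and $M_k\circ f_\omega^k=\tilde\psi_{\sigma^k\omega}\circ f_\omega^k+G_k\circ f_\omega^k-G_{k+1}\circ f_\omega^{k+1}$, the $G$-terms telescope and
\[
\sum_{k=0}^{n-1}M_k\circ f_\omega^k=\sum_{k=0}^{n-1}\tilde\psi_{\sigma^k\omega}\circ f_\omega^k-G_n\circ f_\omega^n .
\]

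First I would use Lemma~\ref{xij}: the variables $X_k=M_k\circ f_\omega^k$ are pairwise orthogonal in $L^2(\mu_\omega)$, so $\big\|\sum_{k=0}^{n-1}X_k\big\|_{L^2(\mu_\omega)}^2=\sum_{k=0}^{n-1}\mathbb{E}_\omega(X_k^2)$, which differs from $\sigma_n^2$ only by the uniformly bounded term $\mathbb{E}_\omega(X_0^2)$ (bounded by Lemma~\ref{Mn}) coming from the index conventions. Next I would bound the coboundary term: using (V3) and Lemma~\ref{GN},
\[
\|G_n\circ f_\omega^n\|_{L^2(\mu_\omega)}^2=\int G_n^2\,d\mu_{\sigma^n\omega}\le\|G_n\|_\infty^2\le C_{\var}^2\sup_{m\ge0}\|G_m\|_{BV}^2<\infty .
\]
Combining the displayed identity with the triangle inequality in $L^2(\mu_\omega)$ then yields $\big|\big\|\sum_{k=0}^{n-1}X_k\big\|_{L^2(\mu_\omega)}-\tau_n\big|\le\|G_n\circ f_\omega^n\|_{L^2(\mu_\omega)}=O(1)$, so $\sigma_n=\tau_n+O(1)\to\infty$, and therefore $\sigma_n^2\to\infty$ for a.e.\ $\omega$.

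There is no serious obstacle here; the only points needing care are that all expectations and $L^2$-norms must be taken with respect to the sample measure $\mu_\omega$ on $X$ (so that Lemma~\ref{var} is applied fibrewise, for the same full-measure set of $\omega$ on which Lemmas~\ref{GN} and~\ref{Mn} give their uniform bounds), and that passing from $\sum_{k=0}^{n-1}$ to $\sum_{k=1}^{n}$ in the definition of $\sigma_n^2$ changes things only by a uniformly bounded amount, which is irrelevant for divergence.
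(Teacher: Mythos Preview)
Your proposal is correct and follows essentially the same approach as the paper: both use the telescoping identity $\sum_{k=0}^{n-1}X_k=\sum_{k=0}^{n-1}\tilde\psi_{\sigma^k\omega}\circ f_\omega^k-G_n\circ f_\omega^n$ together with the orthogonality from Lemma~\ref{xij} and the uniform bound on $G_n$ from Lemma~\ref{GN} to compare $\sigma_n^2$ with $\tau_n^2$, and then invoke Lemma~\ref{var} and $\Sigma^2>0$. The only cosmetic difference is that the paper expands the square of the difference while you use the triangle inequality in $L^2(\mu_\omega)$, which is an equivalent (and arguably cleaner) way to reach the same conclusion.
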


\begin{proof}
 We already established  from~\eqref{MD} that $ \sum_{k=0}^{n-1} X_k=\sum_{k=0}^{n-1} \tilde \psi_{\sigma^k \omega} \circ f_\omega^k -G_n \circ f_\omega^n;$ therefore
\begin{equation}\label{J}
 \bigg{(} \sum_{k=0}^{n-1} X_k \bigg{)}^2 =  \bigg{(}\sum_{k=0}^{n-1} \tilde \psi_{\sigma^k \omega} \circ f_\omega^k \bigg{)}^2 -2 (G_n \circ f_\omega^n) \bigg{(} \sum_{k=0}^{n-1} \tilde \psi_{\sigma^k \omega} \circ f_\omega^k \bigg{)}  +(G_n^2\circ f_\omega^n).
\end{equation}
By Lemma~\ref{var} and the assumption $\Sigma^2>0$,
\begin{equation}\label{y1}
 \tau_n^2:=\mathbb E_\omega \bigg{(}\sum_{k=0}^{n-1} \tilde \psi_{\sigma^k \omega} \circ f_\omega^k \bigg{)}^2 \to \infty.
\end{equation}
On the other hand, it follows from~\eqref{obs}, \eqref{J} and Lemma~\ref{GN} that
\begin{equation}\label{y}
 \mathbb E_\omega  \bigg{(} \sum_{k=0}^{n-1} X_k \bigg{)}^2 \sim \tau_n^2.
\end{equation}
By Lemma~\ref{xij} and~\eqref{y}, we have  that
\begin{equation}\label{0532}
 v_n^2=\sum_{k=0}^{n-1} \mathbb E_\omega (X_k^2)=\mathbb E_\omega  \bigg{(} \sum_{k=0}^{n-1} X_k \bigg{)}^2 \sim  \tau_n^2,
\end{equation}
which together with~\eqref{y1}  implies the desired conclusion of Lemma~\ref{inf}.
\end{proof}

\appendix
\section{Verification of Hypotheses (H\ref{Min2}) and (H\ref{DEC}) for random Lasota-Yorke maps}
\label{sec:appendix}
\subsection{Verification of Hypothesis (H\ref{Min2})}
\label{sec:app1}
\begin{lemma}\label{lem:ConeContraction}
For sufficiently large $a>0$, we have that $\mathcal L_\omega^{RN} C_a \subset C_{a/2}$, for any sufficiently large $R\in \mathbb N$ and a.e.  $\omega \in \Omega$.
 \end{lemma}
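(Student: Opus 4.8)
The plan is to exploit the uniform Lasota--Yorke inequality \eqref{LY}, which after $RN$ iterates reads
\[
\var(\mathcal L_\omega^{RN}\phi)\le (\alpha^N)^R\var(\phi)+C^N\|\phi\|_1,\qquad \phi\in BV,\ \omega\in\Omega,\ R\in\mathbb N,
\]
together with the fact that the $L^1$-norm is preserved: $\|\mathcal L_\omega^{RN}\phi\|_1=\|\phi\|_1$ for $\phi\ge 0$. First I would take $\phi\in C_a$, so $\phi\ge 0$ and $\var(\phi)\le a\int\phi\,dm=a\|\phi\|_1$. Positivity of $\mathcal L_\omega^{RN}\phi$ is automatic since each transfer operator is positive, and the integral is unchanged, so the only thing to control is the variation of the image relative to its integral. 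Plugging the cone bound into the Lasota--Yorke inequality gives
\[
\var(\mathcal L_\omega^{RN}\phi)\le (\alpha^N)^R a\|\phi\|_1+C^N\|\phi\|_1
=\big((\alpha^N)^R a+C^N\big)\int \mathcal L_\omega^{RN}\phi\,dm,
\]
using $\|\phi\|_1=\int\mathcal L_\omega^{RN}\phi\,dm$ in the last step.

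It then remains to choose the constants so that $(\alpha^N)^R a+C^N\le a/2$. Since $\alpha^N\in(0,1)$, first fix $a>0$ large enough that $C^N\le a/4$; this is the sense of ``sufficiently large $a$''. Then, since $(\alpha^N)^R\to 0$ as $R\to\infty$, pick $R_0$ so large that $(\alpha^N)^R\le 1/4$ for all $R\ge R_0$; for such $R$ we get $(\alpha^N)^R a+C^N\le a/4+a/4=a/2$, hence $\var(\mathcal L_\omega^{RN}\phi)\le (a/2)\int\mathcal L_\omega^{RN}\phi\,dm$, i.e. $\mathcal L_\omega^{RN}\phi\in C_{a/2}$. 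Note the bound is uniform in $\omega$ because \eqref{LY} holds for a.e.\ $\omega$ with $\omega$-independent constants $\alpha^N,C^N$.

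There is essentially no hard step here: the argument is a routine bootstrap from the uniform Lasota--Yorke inequality, and the only mild subtlety is the logical order of quantifiers, namely that $a$ must be chosen first (depending only on $C^N$) and then $R$ chosen large depending on $a$ through the decaying factor $(\alpha^N)^R$; this matches the phrasing ``for sufficiently large $a$, \dots\ for any sufficiently large $R$''. One should also record that the $N$ appearing here is the same $N$ as in \eqref{RLY}--\eqref{LY}, so that this lemma feeds directly into the verification of (H\ref{Min2}) via the cone-contraction property combined with the lower bound furnished by the uniform covering condition \eqref{eq:UnifCov}.
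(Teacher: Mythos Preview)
Your proof is correct and follows essentially the same route as the paper: apply the iterated Lasota--Yorke inequality~\eqref{LY} to $\phi\in C_a$, use $\var(\phi)\le a\|\phi\|_1$ and preservation of the integral to bound $\var(\mathcal L_\omega^{RN}\phi)$ by $((\alpha^N)^R a+C^N)\|\phi\|_1$, and then choose $a$ and $R$ so that this is at most $(a/2)\|\phi\|_1$. The only cosmetic difference is in how the constants are split: you take $C^N\le a/4$ and $(\alpha^N)^R\le 1/4$, while the paper imposes $(\alpha^N)^R<1/2$ and $a\ge C^N/(1/2-(\alpha^N)^R)$, which amounts to the same thing.
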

 \begin{proof}
  Choose $\phi\in C_a$. Then, it follows from~\eqref{LY} that
  \[
   \var (\mathcal L_\omega^{RN} \phi)\le (a(\alpha^N)^R +C^N)\lVert \phi\rVert_1 \le a/2 \lVert f\rVert_1,
  \]
whenever $R$ is such that $(\alpha^N)^R<1/2$ and $a\ge \frac{C^N}{1/2-(\alpha^N)^R}$.
 \end{proof}

\begin{proof}[Proof of Lemma \ref{lem:min2}]
Let us assume without loss of generality,  that $\int \phi \,dm=1$.
Following \cite{Liverani} we claim that for every $\phi \in C_a$ there exists an interval $J =[\frac{j-1}{n}, \frac{j}{n}] \subset I$ with $n=\lceil 2a\rceil, 1\leq j <n$, such that $\esinf_{x\in J} \phi \geq \frac12 \int \phi \,dm$.

Note that $\int _J  \phi \,dm \leq |J| \essup (\phi|_J)\leq |J| (\esinf (\phi|_J) + \var (\phi_{int(J)}))$.
In particular, if the claim did not hold, we would have
\[
1=\int_I \phi \,dm = \sum_{j=1}^{n} \int_{[\frac{j-1}{n}, \frac{j}{n}]} \phi \,dm < \frac12 + \frac{1}{n} \var(\phi) \leq 1,
\]
which is a contradiction. Hence, the claim holds.

Now assume \eqref{eq:UnifCov} holds. Let $\phi \in C_a$ (with $\int \phi \,dm=1$) and let $n, J$ be as in the claim above.
Let $k=\max_{1\leq j <n} k([\frac{j-1}{n}, \frac{j}{n}])$, as guaranteed by \eqref{eq:UnifCov}.
Then,
 for a.e. $\omega\in \Omega$, $f_\omega^{(k)}(J)=I$.
From the definition of $\mathcal L$ we get
\[
\esinf \mathcal L_\omega^{(k)} \phi \geq \esinf |f_\omega^{(k)}|^{-1} \esinf (\phi|_J) \geq \frac12 \esinf |f_\omega^{(k)}|^{-1}=:\alpha^*_0,
\]
where $\alpha^*_0$ is independent of $\omega$ (recall that $\essup_{x\in I, \omega \in \Omega} |f_\omega'(x)|<\infty$).

To finish the proof, let $N$ be as in \eqref{LY}, and $R$ be sufficiently large so that $NR>k$ and the conclusion of Lemma~\ref{lem:ConeContraction} holds.
Let $c= 2\alpha^*_0 \cdot \esinf_{x\in I, \omega \in \Omega}  |f_\omega^{(NR-k)}|^{-1}$. Then, for every $\phi \in C_a$,
$\esinf \mathcal L_\omega^{(NR)} \phi \geq c/2$. In addition, by Lemma~\ref{lem:ConeContraction}, $\mathcal L_\omega^{(NR)}\phi \in C_a$ and $\int \mathcal L_\omega^{(NR)}\phi \,dm=1$. Hence by induction, we conclude that for every $n\geq R$,  $\phi \in C_a$ and $\mathbb P$ a.e. $\omega \in \Omega$,
\[
\esinf \mathcal L_\omega^{Nn} \phi \ge c/2 \lVert \phi \rVert_1.
\]
\end{proof}

\subsection{Verification of Hypothesis (H\ref{DEC})}
\label{sec:app2}
We will now establish several auxiliary results that will  show that~(H\ref{Min2}) and~\eqref{LY} imply~(H\ref{DEC}).  We begin by recalling the notion of a Hilbert metric on $C_a$. For $\phi, \psi\in BV$ we write $\phi \preceq \psi$ if $\psi-\phi \in C_a \cup \{0\}$. Furthermore, for $\phi, \psi\in BV$ we define
\[
\Xi (\phi, \psi):=\sup \{\lambda \in \mathbb R^+: \lambda \phi \preceq \psi \} \quad \text{and} \quad \Upsilon(\phi, \psi):= \inf \{ \mu \in \mathbb R^+: \psi \preceq \mu \phi\},
\]
where we take $\Xi (\phi, \psi)=0$ and $\Upsilon(\phi, \psi)=\infty$ if the corresponding sets are empty. Finally, set
\[
\Theta_a (\phi, \psi)=\log \frac{\Upsilon (\phi, \psi)}{\Xi (\phi, \psi)}, \quad \text{for $\phi, \psi\in C_a$.}
\]
We recall (see~\cite{Liverani, Buzzi}) that  for $\psi\in C_{\nu a}$ for $\nu \in (0, 1)$ such that $\lVert \psi\rVert_1=1$,  we have
\begin{equation}\label{ux}
\Theta_a(g, 1) \le \log \frac{(1+\nu)(1+V)\essup \psi}{(1-\nu)(1-V)\esinf \psi}, \quad \text{where $V=\frac{\var(1_X)}{a}$.}
\end{equation}
\begin{lemma}\label{lem:17}
 Assume that $\varphi, \psi \in C_a$, $\int \phi=\int \psi=1$.
 Then,
 \begin{equation}\label{0}
  \lVert \varphi-\psi \rVert_{BV} \le 2(1+a) \Theta_a(\varphi, \psi).
 \end{equation}
\end{lemma}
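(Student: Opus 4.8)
The plan is to combine an elementary decomposition of $\varphi-\psi$ as a scaled cone element plus a scaled copy of $\varphi$ (both of controlled $BV$-norm) with the convexity inequality $1-t\le-\log t$, valid for $t\in(0,1]$.

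First I would dispose of the trivial cases: if $\Xi(\varphi,\psi)=0$ or $\Upsilon(\varphi,\psi)=\infty$ then $\Theta_a(\varphi,\psi)=+\infty$ and there is nothing to prove, so assume $0<\Xi(\varphi,\psi)\le\Upsilon(\varphi,\psi)<\infty$ and abbreviate $\lambda=\Xi(\varphi,\psi)$, $\mu=\Upsilon(\varphi,\psi)$. Since $C_a$ is a convex cone, the set of $\lambda'\ge 0$ with $\psi-\lambda'\varphi\in C_a\cup\{0\}$ is an interval with supremum $\lambda$, so values $\lambda'<\lambda$ approaching $\lambda$ are admissible; for any such $\lambda'$, integrating $\psi-\lambda'\varphi\ge 0$ against $m$ and using $\int\varphi\,dm=\int\psi\,dm=1$ forces $\lambda'\le 1$, whence $\lambda\le 1$. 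The symmetric argument with $\mu'\varphi-\psi$ gives $\mu\ge 1$.

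The main step is the norm estimate for a fixed admissible $\lambda'<\lambda$. Put $g=\psi-\lambda'\varphi$; if $g=0$ the normalization forces $\lambda'=1$ and $\varphi=\psi$, so the bound is trivial, and otherwise $g\in C_a$. Then $g\ge 0$ and, by the definition of $C_a$, $\var(g)\le a\int g\,dm=a(1-\lambda')$, so $\|g\|_{BV}=\var(g)+\|g\|_1\le(1+a)(1-\lambda')$; likewise $\varphi\in C_a$ with $\int\varphi\,dm=1$ gives $\|\varphi\|_{BV}=\var(\varphi)+1\le 1+a$. Now the identity $\varphi-\psi=(1-\lambda')\varphi-g$ together with (V1)--(V2) yields $\|\varphi-\psi\|_{BV}\le(1-\lambda')\|\varphi\|_{BV}+\|g\|_{BV}\le 2(1+a)(1-\lambda')$, and letting $\lambda'\uparrow\lambda$ gives $\|\varphi-\psi\|_{BV}\le 2(1+a)(1-\lambda)$.

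To conclude I would use $\mu\ge 1$ and the elementary inequality $1-\lambda\le-\log\lambda$ (convexity of $-\log$ at $\lambda\in(0,1]$) to obtain $1-\lambda\le-\log\lambda\le\log\mu-\log\lambda=\Theta_a(\varphi,\psi)$, which combined with the previous inequality gives \eqref{0}. There is no serious obstacle here; the only point requiring a little care is that the supremum defining $\Xi(\varphi,\psi)$ need not a priori be attained, which is why the estimate is run for comparison constants $\lambda'<\lambda$ and then passed to the limit (alternatively, $L^1$-closedness of $C_a\cup\{0\}$ yields attainment directly, but it is not needed). The conceptual content lies entirely in the decomposition $\psi-\varphi=(\psi-\lambda'\varphi)-(1-\lambda')\varphi$ and the logarithmic inequality bounding the deficit $1-\Xi(\varphi,\psi)$ by the Hilbert distance.
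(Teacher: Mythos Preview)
Your proof is correct and follows essentially the same route as the paper: decompose $\psi-\varphi=(\psi-r\varphi)-(1-r)\varphi$ with $r\varphi\preceq\psi$, bound each piece via the cone condition to get $\|\varphi-\psi\|_{BV}\le 2(1+a)(1-r)$, and then use $1-r\le-\log r\le\log(s/r)=\Theta_a(\varphi,\psi)$. The only difference is cosmetic: the paper works directly with any admissible pair $r\le 1\le s$, while you take the supremum $\lambda=\Xi(\varphi,\psi)$ and pass to the limit along $\lambda'\uparrow\lambda$, which is a bit more careful but not essential.
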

\begin{proof}
Let $r,s\ge 0$ such that $r\le 1\le s$ and $r\varphi\preceq \psi\preceq s\varphi$. Note that if  such $r$ or $s$ do not exist, we have that $\Theta_a(\varphi, \psi)=\infty$ and there is nothing to prove.
Then, we have $$\lVert \psi-\varphi \rVert_1 \le \int |\psi-r\varphi|+\int (1-r)\varphi= 2(1-r).$$
Furthermore,
$$\var(\psi-\varphi)\le \var(\psi-r\varphi)+(1-r)\var(\varphi)\le a(1-r)+(1-r)a=2a(1-r).$$
The above two estimates imply that
\[\lVert \psi-\varphi \rVert_{BV}\le 2(1-r)(1+a). \]
Since $1-r\le -\log r\le \log s/r $,  we conclude the required inequality from the definition of $\Theta_a$.
\end{proof}
\begin{lemma}\label{xc}
  For any $a\ge 2\var(1_X)$, we have that $\mathcal L_\omega^{RN}$ is a contraction on $C_a$,  for any sufficiently large $R\in \mathbb N$ and a.e.  $\omega \in \Omega$.
\end{lemma}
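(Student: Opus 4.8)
The plan is to run the classical Birkhoff argument: a positive linear operator that maps the cone $C_a$ into a subcone of finite $\Theta_a$-diameter is automatically a strict contraction of the Hilbert pseudo-metric $\Theta_a$, with contraction factor $\tanh(\Delta/4)<1$, where $\Delta$ is the $\Theta_a$-diameter of the image. Since this factor depends only on $\Delta$, the whole task reduces to producing a bound on the $\Theta_a$-diameter of $\mathcal L_\omega^{RN}(C_a)$ that is \emph{uniform in} $\omega$, for a suitably large fixed $R$.

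First I would fix $a \ge 2\var(1_X)$ (taking it large enough that Lemma~\ref{lem:ConeContraction} applies, if necessary) and note that $1_X \in C_a$, since $\var(1_X) \le a = a\int 1_X \, dm$. Then I would choose $R$ large enough that, for a.e.\ $\omega \in \Omega$, two things hold simultaneously: (i) $\mathcal L_\omega^{RN}(C_a) \subset C_{a/2}$, by Lemma~\ref{lem:ConeContraction}; and (ii) the lower bound of~(H\ref{Min2}) holds, namely $\esinf \mathcal L_\omega^{RN} h \ge (c/2)\lVert h\rVert_1$ for every $h \in C_a$ (note $RN$ has the required form $N\cdot n$ with $n = R$ large).

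To bound the diameter, take $\varphi, \psi \in C_a$; since $\Theta_a$ is invariant under positive rescaling of either argument, I may normalize $\lVert \varphi\rVert_1 = \lVert \psi\rVert_1 = 1$, so that $g := \mathcal L_\omega^{RN}\varphi$ satisfies $\lVert g\rVert_1 = 1$ and $g \in C_{a/2}$. Then $V := \var(1_X)/a \le 1/2$, and~\eqref{ux} with $\nu = 1/2$ gives
\[
\Theta_a(g, 1_X) \le \log \frac{(3/2)(1+V)\,\essup g}{(1/2)(1-V)\,\esinf g}.
\]
By~(V3) together with $g \in C_{a/2}$ we have $\essup g \le C_{\var}(\lVert g\rVert_1 + \var(g)) \le C_{\var}(1 + a/2)$, while step (ii) gives $\esinf g \ge c/2$. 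Hence $\Theta_a(g, 1_X) \le D$ for a constant $D<\infty$ depending only on $a$, $C_{\var}$, $\var(1_X)$ and $c$ --- in particular independent of $\omega$ and of $\varphi$. Using the symmetry of $\Theta_a$ and the triangle inequality, $\Theta_a(\mathcal L_\omega^{RN}\varphi, \mathcal L_\omega^{RN}\psi) \le \Theta_a(\mathcal L_\omega^{RN}\varphi, 1_X) + \Theta_a(1_X, \mathcal L_\omega^{RN}\psi) \le 2D$, so $\mathcal L_\omega^{RN}(C_a)$ has $\Theta_a$-diameter at most $2D$. The classical Birkhoff contraction theorem then yields $\Theta_a(\mathcal L_\omega^{RN}\varphi, \mathcal L_\omega^{RN}\psi) \le \tanh(D/2)\,\Theta_a(\varphi, \psi)$ for all $\varphi,\psi\in C_a$ and a.e.\ $\omega$, with $\tanh(D/2)<1$, which is the asserted contraction.

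The delicate point --- and the main potential obstacle --- is the uniformity in $\omega$ of the diameter bound: it rests entirely on the $\omega$-uniform lower bound $\esinf \mathcal L_\omega^{RN} h \ge (c/2)\lVert h\rVert_1$ furnished by~(H\ref{Min2}); a lower bound with an $\omega$-dependent constant would only give an $\omega$-dependent contraction rate and would be insufficient for the later passage to~(H\ref{DEC}). By contrast, the matching upper bound on $\essup g$ is essentially free, being a direct consequence of $g \in C_{a/2}$ and~(V3), and the hypothesis $a \ge 2\var(1_X)$ is exactly what keeps the factor $(1-V)^{-1}$ appearing in~\eqref{ux} bounded.
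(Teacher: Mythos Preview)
Your proposal is correct and follows essentially the same route as the paper's own proof: normalize in $L^1$, use Lemma~\ref{lem:ConeContraction} to land in $C_{a/2}$, bound $\essup g$ via~(V3) and $\esinf g$ via~(H\ref{Min2}), feed these into~\eqref{ux} to control $\Theta_a(\mathcal L_\omega^{RN}\varphi,1_X)$, and then conclude by the triangle inequality and Birkhoff's contraction theorem with rate $\tanh(\Delta/4)$. Your explicit emphasis on the $\omega$-uniformity of the diameter bound (resting on the uniform constant $c$ in~(H\ref{Min2})) is exactly the point the paper relies on implicitly.
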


\begin{proof}
We follow closely \cite[Lemma 2.5.]{Buzzi}.
 Let $R$ be given by Lemma~\ref{lem:ConeContraction}. We will assume that $n=R$ also satisfies~\eqref{Min2} (with respect to some $c$). Take now $\phi, \psi\in C_a$. It is sufficient to consider the case
 when $\lVert \phi\rVert_1=\lVert \psi\rVert_1=1$. Then,
 \[
  \essup \mathcal L_\omega^{RN} \phi \le \lVert \mathcal L_\omega^{RN} \phi\rVert_1+C_{\var} \var(\mathcal L_\omega^{RN} \phi) \le (1+C_{\var}a/2)\lVert \phi\rVert_1=1+C_{\var}a/2.
  \]
By~\eqref{Min2},
\[
 \esinf \mathcal L_\omega^{RN} \phi \ge c/2.
\]
Using~\eqref{ux}, we obtain that
\[
 \Theta_a (\mathcal L_\omega^{RN} \phi , 1)\le \log \frac{3/2(1+\var(1_X)/a)(1+C_{\var}a/2)}{c(1-\var(1_X)/a)/4}
\]
Since $a\ge 2\var(1_X)$, we have
\[
 \Theta_a (\mathcal L_\omega^{RN} \phi , 1)\le \log \frac{9(1+C_{\var}a/2)}{c/2}.
\]
Using triangle inequality,
\[
 \Theta_a (\mathcal L_\omega^{RN} \phi, \mathcal L_\omega^{RN} \psi)\le 2\log \frac{9(1+C_{\var}a/2)}{c/2}=:\Delta<\infty.
\]
This implies that $\mathcal L_\omega^{RN}$ is a contraction with coefficient $\kappa \in (0, 1)$ satisfying $\kappa \le \tanh (\Delta/4)$.
\end{proof}
Let $BV^0$ denote the space of all functions in $BV$ of zero mean.
\begin{lemma}\label{lem:19}
 For each $\phi\in BV^0$, there exist $K=K(\phi), \lambda=\lambda(\phi)>0$ such that
\[
 \lVert \mathcal L_\omega^n \phi\rVert_{BV} \le Ke^{-\lambda n} \lVert \phi\rVert_{BV} \quad  \text{for a.e. $\omega \in \Omega$ and $ n\in \mathbb N$.}
\]
\end{lemma}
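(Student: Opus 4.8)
The plan is to write $\phi$ as a scaled difference of two probability densities lying in a cone $C_a$, and then exploit the Hilbert--metric contraction of Lemma~\ref{xc} together with the comparison estimate of Lemma~\ref{lem:17}; the $\mathbb P$-a.e.\ uniformity in $\omega$ of the constants $c$ (from~(H\ref{Min2})) and $C$ (from~(H\ref{ULY})) is what keeps everything under control along the $\sigma$-orbit.

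If $\var(\phi)=0$ then $\phi$ is constant and, being of zero mean, $\phi=0$, so assume $\var(\phi)>0$. Fix $a\ge 2\var(1_X)$ and $R\in\mathbb N$ large enough that both Lemma~\ref{lem:ConeContraction} (so $\mathcal L_\omega^{(RN)}C_a\subset C_{a/2}$) and Lemma~\ref{xc} apply for a.e.\ $\omega$; let $\kappa\in(0,1)$ be the resulting contraction coefficient and let $\Delta<\infty$ be the bound on the $\Theta_a$-diameter of $\mathcal L_\omega^{(RN)}(C_a)$ obtained in the proof of Lemma~\ref{xc} (both uniform in $\omega$). Using~(V3), pick $\epsilon=\epsilon(\phi)>0$ with $\epsilon\lVert\phi\rVert_\infty<1$ and $\var(1_X)+\epsilon\var(\phi)\le a$, and set $\psi_\pm:=1_X\pm\epsilon\phi$. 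Then $\psi_\pm\ge 0$, $\int\psi_\pm\,dm=1$ (as $\int\phi\,dm=0$) and $\var(\psi_\pm)\le a\int\psi_\pm\,dm$, so $\psi_\pm\in C_a$ and $\phi=\frac{1}{2\epsilon}(\psi_+-\psi_-)$.

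Since $\mathcal L_\omega^{(RN)}C_a\subset C_{a/2}\subset C_a$, induction gives $\mathcal L_\omega^{(kRN)}\psi_\pm\in C_a$ with $\int\mathcal L_\omega^{(kRN)}\psi_\pm\,dm=1$ for all $k\ge 1$ and a.e.\ $\omega$. Writing $\mathcal L_\omega^{(kRN)}=\mathcal L_{\sigma^{(k-1)RN}\omega}^{(RN)}\circ\cdots\circ\mathcal L_\omega^{(RN)}$ as a composition of $k$ maps, the first of which sends $C_a$ into a set of $\Theta_a$-diameter at most $\Delta$ and each subsequent one of which contracts $\Theta_a$ by $\kappa$, we obtain for $k\ge 1$
\[
\Theta_a\bigl(\mathcal L_\omega^{(kRN)}\psi_+,\mathcal L_\omega^{(kRN)}\psi_-\bigr)\le\kappa^{\,k-1}\Delta .
\]
Applying Lemma~\ref{lem:17} to $\mathcal L_\omega^{(kRN)}\psi_\pm$ and using linearity of $\mathcal L_\omega^{(kRN)}$ together with~(V1) then gives
\[
\bigl\lVert\mathcal L_\omega^{(kRN)}\phi\bigr\rVert_{BV}=\frac{1}{2\epsilon}\bigl\lVert\mathcal L_\omega^{(kRN)}\psi_+-\mathcal L_\omega^{(kRN)}\psi_-\bigr\rVert_{BV}\le\frac{(1+a)\Delta}{\epsilon}\,\kappa^{\,k-1},\qquad k\ge 1 .
\]

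It remains to treat arbitrary $n$. For $n\ge RN$ write $n=kRN+r$ with $k\ge 1$ and $0\le r<RN$; then $r$ applications of~(H\ref{ULY}) give $\lVert\mathcal L_\omega^{(n)}\phi\rVert_{BV}\le C^{\,r}\lVert\mathcal L_\omega^{(kRN)}\phi\rVert_{BV}\le C^{RN}\frac{(1+a)\Delta}{\epsilon}\kappa^{\,k-1}$, and since $k-1\ge n/(RN)-2$ this is at most $K(\phi)e^{-\lambda n}\lVert\phi\rVert_{BV}$ with $\lambda:=-\frac{\log\kappa}{RN}>0$ and $K(\phi)$ absorbing all remaining constants; for the finitely many $n<RN$, the bound $\lVert\mathcal L_\omega^{(n)}\phi\rVert_{BV}\le C^{RN}\lVert\phi\rVert_{BV}$ from~(H\ref{ULY}) is again of this form after enlarging $K(\phi)$. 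Restricting $\omega$ to the full-measure set on which all of the above holds for every $j\ge 0$ (possible since $\sigma$ preserves $\mathbb P$) finishes the argument. The only genuinely non-mechanical point is that the decomposition $\phi=\frac1{2\epsilon}(\psi_+-\psi_-)$ forces $\epsilon$, and hence $K$, to depend on $\phi$ through $\lVert\phi\rVert_\infty$ and $\var(\phi)$ --- which is exactly why the statement is formulated with $\phi$-dependent constants, even though the rate $\lambda$ is in fact uniform; everything else (the orbit-wise bookkeeping, the passage to non-multiples of $RN$) is routine.
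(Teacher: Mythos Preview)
Your proof is correct and follows the same overall strategy as the paper --- write $\phi$ as a difference of two unit-mass elements of $C_a$, then invoke the Hilbert-metric contraction of Lemma~\ref{xc} and translate back to the $BV$-norm via Lemma~\ref{lem:17}. The only real difference is in the decomposition: the paper takes the Jordan splitting $\phi=\phi^+-\phi^-$ (after normalising $\lVert\phi\rVert_1=2$ so that $\lVert\phi^\pm\rVert_1=1$) and then enlarges $a=a(\phi)$ until $\phi^\pm\in C_a$, whereas you keep $a$ fixed and instead perturb $1_X$ by $\pm\epsilon\phi$. Your choice has two small advantages: it makes the membership $\psi_\pm\in C_a$ obvious from (V1), (V2), (V5) without needing to know that $\var(\phi^\pm)<\infty$, and, as you observe, it yields a rate $\lambda$ that is independent of $\phi$ (only $K(\phi)$ carries the $\phi$-dependence through $\epsilon$). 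In the paper's version, since $a$, and hence $R$, $\kappa$, and $M=RN$, all depend on $\phi$, the rate $\lambda(\phi)$ is genuinely $\phi$-dependent --- which is why the subsequent Lemma~\ref{lem:dec} has to work harder (via uniform boundedness) to extract a uniform rate.
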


\begin{proof}
 Without any loss of generality, we can assume that $\lVert \phi\rVert_1=2$ and thus $\lVert \phi^+\rVert_1=\lVert \phi^-\rVert_1=1$. Obviously, there exists $a\ge 2\var(1_X)$ such that
 $\phi^+, \phi^-\in C_a$. Assume that $R$ is given by previous lemma and set $M=RN$. Write $n=kM+r$ for $k\in \mathbb N_0$ and $0\le r<M$. It follows from~(H2), \eqref{0}  and Lemma~\ref{xc} that
\[
 \begin{split}
  \lVert \mathcal L_\omega^n \phi\rVert_{BV} &= \lVert \mathcal L_\omega^{kM+r} \phi\rVert_{BV} \\
  &=\lVert \mathcal L_{\sigma^{kM} \omega}^r \mathcal L_{\omega}^{kM}f\rVert_{BV} \\
  &\le 2C^r (1+a) \Theta_a(\mathcal L_{\omega}^{kM}\phi^+, \mathcal L_{\omega}^{kM} \phi^-) \\
  &\le 2C^{M-1} (1+a) \Theta_a(\mathcal L_{\omega}^{kM}\phi^+, \mathcal L_{\omega}^{kM} \phi^-) \\
  &\le K\kappa^k  \\
  &\le \frac K 2 \kappa^k \lVert \phi\rVert_1 \\
  &\le \frac K 2 (\kappa^{1/M})^n \cdot \kappa^{-r/M} \lVert \phi\rVert_{BV},
 \end{split}
\]
for some $K>0$ which readily implies the desired conclusion.
\end{proof}
Finally, we obtain~\eqref{DEC} by removing $\phi$-dependence of $K$ and $\lambda$ in Lemma~\ref{xc}.

\begin{proof}[Proof of Lemma \ref{lem:dec}]
 Let $l^1$ denote the space of all sequences $\Phi=(\phi_n)_{n\ge 1} \subset BV^0$ such that
 \[ \lVert  \Phi\rVert_1=\sum_{n\ge 1} \lVert \phi_n\rVert_{BV} <\infty.\]
Then, $(l^1, \lVert \cdot \rVert_1)$ is a Banach space.
For each $\omega \in \Omega$ and $n\in \mathbb N$, we define a linear operator $T(\omega, n) \colon BV^0 \to l^1$ by
\[
 T(\omega, n)\phi=(\mathcal L_\omega (\phi), \mathcal L_\omega^2(\phi), \ldots, \mathcal L_\omega^n(\phi), 0,0,  \ldots  ), \quad \phi\in BV^0.
\]
We note that
 $T(\omega, n)$ is bounded operator.
Indeed, it follows from~(H2) that
 \[
  \lVert T(\omega, n)\phi\rVert_1=\sum_{k=1}^n \lVert \mathcal L_\omega^k (\phi)\rVert_{BV} \le \sum_{k=1}^n C^k \lVert \phi\rVert_{BV} \le nC^n \lVert \phi\rVert_{BV}.
 \]
 Hence, $T(\omega, n)$ is bounded.
 Furthermore, note that it follows from previous lemma that
\[
 \lVert T(\omega, n)\phi\rVert_1=\sum_{k= 1}^n \lVert \mathcal L_\omega^k(\phi)\rVert_{BV} \le \frac{K(\phi)}{1-e^{-\lambda (\phi)}} \lVert \phi\rVert_{BV}=C(\phi)\lVert \phi\rVert_{BV},
\]
where
\[
C(\phi):= \frac{K(\phi)}{1-e^{-\lambda (\phi)}}.
\]
Hence, for each $\phi\in BV^0$, we have
\[
 \sup \{\lVert T(\omega, n)\phi\rVert_1: \omega \in \Omega, \, n\in \mathbb N \}<\infty.
\]
It follows from the uniform boundedness principle that there exists $L>0$ independent on $\omega$ and $n$ such that
\[
 \lVert T(\omega, n)\rVert \le L, \quad \omega \in \Omega, \, n\in \mathbb N.
\]
Hence,
\[
 \sum_{k= 1}^n \lVert \mathcal L_\omega^k(\phi)\rVert_{BV} \le L \lVert \phi\rVert_{BV}, \quad \omega \in \Omega, \, \phi\in BV^0, \, n\in \mathbb N.
\]
In particular,
\begin{equation}\label{p}
 \lVert \mathcal L_\omega^n(\phi)\rVert_{BV}  \le L \lVert \phi\rVert_{BV}, \quad \omega \in \Omega, \, \phi\in BV^0, \, n\in \mathbb N.
\end{equation}
Using~\eqref{p}, for $\omega \in \Omega$ and $1\le k\le n$,
\[
 \lVert \mathcal L_\omega^n(\phi)\rVert_{BV} =\lVert \mathcal L_{\sigma^k \omega}^{n-k} \mathcal L_\omega^k(\phi)\rVert_{BV} \le L \lVert \mathcal L_\omega^k(\phi)\rVert_{BV}.
\]
Summing over $k$,
\[
 n\lVert \mathcal L_\omega^n(\phi)\rVert_{BV} \le L\sum_{k=1}^n \lVert \mathcal L_\omega^k(\phi)\rVert_{BV} \le L^2 \lVert \phi\rVert_{BV},
\]
and thus
\[
 \lVert \mathcal L_\omega^n(\phi)\rVert_{BV} \le \frac{L^2}{n} \lVert \phi\rVert_{BV}.
\]
We conclude that there exists $N_0\in \mathbb N$ independent on $\omega$ such that
\[
  \lVert \mathcal L_\omega^{N_0}(\phi)\rVert \le \frac{1}{e} \lVert \phi\rVert, \quad \omega \in \Omega, \, \phi\in BV^0.
\]
Take now any $n\in \mathbb N$ and write it as $n=kN_0+r$, $k\in \mathbb N_0$ and $0\le r<N_0$. Using~(H2) and the inequality above,
\[
 \lVert \mathcal L_\omega^n \phi\rVert_{BV} \le \frac{C^{N_0}}{e^k} \lVert \phi\rVert_{BV}, \quad \omega \in \Omega, \, \phi\in BV^0,
\]
which readily implies~(H\ref{DEC}).
\end{proof}
\section{Verification of Hypothesis (H4) for multidimensional random piecewise expanding maps}
\label{sect:appB}
Define the cone $\mathcal{C}_a:=\{\phi\in \mathcal{B}_{\beta}; \phi\ge 0; \var_{\beta}(\phi)\le a \  \mathbb{E}_m(\phi)\}.$
The following lemma is the multidimensional version of the first part of the proof of Lemma \ref{lem:min2}.
\begin{lemma}
\label{preRPM}
For each element $\phi\in\mathcal{C}_a$, there is an open set $J$ on which $\phi$ is bounded from below by $\mathbb{E}_m(\phi)/2$.
\end{lemma}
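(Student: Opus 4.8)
The plan is to follow the one-dimensional argument from the proof of Lemma~\ref{lem:min2}, but to replace the partition of the interval $I$ into small pieces --- which has no convenient higher-dimensional substitute --- by an averaging over small balls. This is precisely the mechanism through which the quasi-H\"older variation $\var_\beta$ enters estimates in this setting (compare the Lasota--Yorke inequality in~\cite{Saussol}). I would first dispose of the degenerate case $\mathbb E_m(\phi)=0$, in which $\phi\equiv 0$ and the statement is trivial, so that from now on $\mathbb E_m(\phi)>0$; it is harmless to regard $\phi$ as extended by $0$ to all of $\mathbb R^N$, so that $\osc(\phi,B_\epsilon(x))$ is defined for every $x\in\mathbb R^N$ and $\epsilon>0$.

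Next I would argue by contradiction, assuming $\esinf_J\phi<\frac12\mathbb E_m(\phi)$ for every nonempty open $J\subseteq M$. A ball $B_\epsilon(x)$ either meets $\mathrm{int}(M)$, in which case $B_\epsilon(x)\cap\mathrm{int}(M)$ is a nonempty open subset of $M$ contained in $B_\epsilon(x)$, or else is disjoint from $M$ and carries $\phi\equiv 0$; in either case $\esinf_{B_\epsilon(x)}\phi<\frac12\mathbb E_m(\phi)$, hence $\essup_{B_\epsilon(x)}\phi=\esinf_{B_\epsilon(x)}\phi+\osc(\phi,B_\epsilon(x))<\frac12\mathbb E_m(\phi)+\osc(\phi,B_\epsilon(x))$, for every $x$ and every $\epsilon>0$. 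Now fix $\epsilon\in(0,\epsilon_0]$. Since $\phi\le\essup_{B_\epsilon(x)}\phi$ a.e.\ on $B_\epsilon(x)$ for each fixed $x$, Fubini gives that for a.e.\ $y$ the bound $\phi(y)\le\essup_{B_\epsilon(x)}\phi$ holds for a.e.\ $x\in B_\epsilon(y)$; averaging in $x$ over $B_\epsilon(y)$ (of Lebesgue measure $\Gamma_N\epsilon^N$) and using the previous display,
\[
\phi(y)\le\frac{1}{\Gamma_N\epsilon^N}\int_{B_\epsilon(y)}\essup_{B_\epsilon(x)}\phi\,dx<\frac12\mathbb E_m(\phi)+\frac{1}{\Gamma_N\epsilon^N}\int_{B_\epsilon(y)}\osc(\phi,B_\epsilon(x))\,dx .
\]
Integrating over $y\in M$, using Fubini again together with $m(M\cap B_\epsilon(x))\le\Gamma_N\epsilon^N$, and invoking the definition of $\var_\beta$ (legitimate since $\epsilon\le\epsilon_0$), I would obtain
\[
\int_M\phi\,dm\le\frac12\,\mathbb E_m(\phi)\,m(M)+\int_{\mathbb R^N}\osc(\phi,B_\epsilon(x))\,dx\le\frac12\,\mathbb E_m(\phi)\,m(M)+\epsilon^\beta\var_\beta(\phi).
\]
Since $\phi\in\mathcal C_a$ forces $\var_\beta(\phi)\le a\,\mathbb E_m(\phi)$, dividing by $\mathbb E_m(\phi)>0$ and absorbing the constant relating $m$ to Lebesgue measure on $M$ yields $1\le\frac12+c\,a\,\epsilon^\beta$ for some $c=c(M)>0$; choosing $\epsilon\in(0,\epsilon_0]$ with $c\,a\,\epsilon^\beta<\frac12$ is a contradiction. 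Hence there is an $x$ with $\esinf_{B_\epsilon(x)}\phi\ge\frac12\mathbb E_m(\phi)$, and since this positive lower bound forces $B_\epsilon(x)\setminus M$ to be Lebesgue-null and $M$ is closed, $B_\epsilon(x)\subseteq M$ and $J:=B_\epsilon(x)$ is the desired open set.

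I expect the only real difficulties to be measure-theoretic housekeeping rather than anything conceptual: justifying the ``for a.e.\ $y$, for a.e.\ $x\in B_\epsilon(y)$'' interchange (which requires the joint measurability of $(x,y)\mapsto\essup_{B_\epsilon(x)}\phi$, standard in Saussol's framework), and keeping track of the normalization constant between the reference measure $m$ and Lebesgue measure so that the final inequality genuinely closes. Once the averaging estimate $\int_M\phi\,dm\le\frac12\mathbb E_m(\phi)\,m(M)+\epsilon^\beta\var_\beta(\phi)$ is established, the conclusion is immediate, exactly as in the one-dimensional proof.
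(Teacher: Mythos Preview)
Your argument is correct, but it takes a more roundabout route than the paper's and rests on a premise that is not actually true. You write that the partition of $I$ into small pieces ``has no convenient higher-dimensional substitute''; in fact it does, and the paper simply partitions $M$ into cubes of diameter less than some $\epsilon'<\min\{\epsilon_0,\frac{1}{2a}\}$. On each piece $B_k$ one has $\sup_{B_k}\phi\le \inf_{B_k}\phi+\osc(\phi,B_k)\le \inf_{B_k}\phi+\osc(\phi,B_{\epsilon'}(x))$ for every $x\in B_k$, so summing gives
\[
1=\int_M\phi\,dm\le \sum_k m(B_k)\,\esinf_{B_k}\phi + \int_M \osc(\phi,B_{\epsilon'}(x))\,dm(x),
\]
and the last integral is at most $(\epsilon')^\beta\var_\beta(\phi)\le (\epsilon')^\beta a<\tfrac12$. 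Hence some cube $B_j$ has $\esinf_{B_j}\phi\ge\tfrac12$, and its interior serves as $J$. This is the exact higher-dimensional transcription of the one-dimensional argument; no contradiction, no Fubini, no averaging.

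Your approach --- pointwise bounding $\phi(y)$ by an average of $\essup_{B_\epsilon(x)}\phi$ over $x\in B_\epsilon(y)$, then integrating in $y$ --- reaches the same integral inequality and works, but at the cost of the measurability/Fubini bookkeeping you flag, the care about $m$ versus Lebesgue measure, and the additional argument at the end that $B_\epsilon(x)\subseteq M$. None of this is wrong, but it is all avoidable. The one small payoff of your version is that it produces $J$ as a genuine metric ball rather than merely the interior of a partition piece, which is cosmetically closer to what the downstream covering argument uses; but since any open set contains a ball, this buys nothing substantive.
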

\begin{proof}[Proof of Lemma \ref{preRPM}]
Take a  function $\phi\in \mathcal{C}_a$  with $\int_M \phi dm=1$.
Let us consider a sufficiently fine, finite partition $Q$ of $M$ into cubes (intersected with $M$).
We recall that $\epsilon_0$ is the constant entering the seminorm $\text{var}_{\beta}$. Consider $\epsilon'<\min\{ \epsilon_0, \frac{1}{2a} \}$ and assume all elements in $Q$ have diameter less than $\epsilon'$. Then, for every $x\in B_k$ we have that $B_k \subset B_{\epsilon'}(x)$. In particular, $\osc(\phi, B_{k})\leq \osc(\phi, B_{\epsilon'}(x))$ for every $x\in B_k$.
Then,
 \begin{equation}\label{ei}
 1=\int_M \phi dm= \sum_k \int_{B_k}\phi dm\le \sum_k\int_{B_k} \sup_{B_k} \phi dm \le \sum_k\int_{B_k} (\inf_{B_k} \phi + \osc(\phi, B_{k}) )    dm \leq
 $$
 $$
 \sum_k  \int_{B_k} \left( \inf_{B_k} \phi +\osc(\phi, B_{\epsilon'}(x)) \right)dm (x) \leq  \left(\sum_k  \int_{B_k} ( \inf_{B_k} \phi ) dm \right) + \int_M\osc(\phi, B_{\epsilon'}(x))dm(x).
\end{equation}
Notice that $\int_M\osc(\phi, B_{\epsilon'}(x))dm(x) \le \epsilon'\var_{\beta}(\phi)\le \epsilon' \ a \ \int_M \phi dm <\frac12$.
Hence, there exists a cube $B_j$ where the essential infimum of $f$ is at least $1/2.$ Indeed, if this were not the case, the first term on the r.h.s. of  (\ref{ei}) would be  bounded above by $1/2$, and \eqref{ei} would not be satisfied.
\end{proof}

  \begin{proof}[Proof of Lemma \ref{RPM}]
  The proof follows closely the strategy from Appendix \ref{sec:appendix}.
  We first deal with hypothesis (H\ref{Min2}).
  Lemma \ref{lem:ConeContraction} follows similarly in multidimensional setting.
  Lemma \ref{preRPM} proves the first part of Lemma \ref{lem:min2} in our multidimensional setting, and the rest of the proof of Lemma \ref{lem:min2} proceeds verbatim.
    Now we turn to Hypthesis (H\ref{DEC}).
 In the multidimensional setting, we replace lemma \ref{lem:17} with Lemma 3.8 \cite{GOT}.
  The proofs of Lemmas~\ref{xc}--\ref{lem:19} remain valid in the multidimensional setting using $\var_\beta$ instead of $\var$.
     The proof of Lemma \ref{lem:dec} then proceeds verbatim.
      We thus obtain (H\ref{DEC}) in our multidimensional setting.
   \end{proof}
   We note that results similar to Lemmas \ref{xc} and \ref{lem:19} in the multidimensional setting are contained in Lemma 3.6 and  Theorem 2.17 \cite{GOT}, respectively.

\section*{Acknowledgments}
The research of DD and GF was supported by the Australian Research Council Discovery Project DP150100017 and, in part, by the Croatian Science Foundation under the project IP-2014-09-228.
The research of CGT was supported by ARC DECRA (DE160100147).
SV was supported by the ANR-Project {\em Perturbations}, by  the Leverhulme Trust  for support thorough the Network Grant IN-2014-021, by the
project APEX Syst\`emes dynamiques: Probabilit\'es et Approximation Diophantienne
PAD funded by the R\'egion PACA (France), and by the INdAM (Istituto Naeionale di alta Matematica).
SV and CGT warmly thank the School of Mathematics and Statistics of the University of New South Wales for the kind hospitality during the preparation of this work.
SV acknowledges several discussions with M. Abdelkader and R. Aimino on the topics covered by this paper.
GF, CGT and SV thank the American Institute of Mathematics which gave them the opportunity to meet and begin this project during the Conference {\em Stochastic methods for non-equilibrium dynamical systems}, in San Jos\'e (CA, USA), in 2015.
The authors also acknowledge the South University of Science and Technology of China in Shenzhen where they met and completed this work during the Conference  {\em Statistical Properties of Nonequilibrium Dynamical Systems, in 2016.} We finally thank the referees whose clever comments and suggestions helped us to improve the paper.

\bibliographystyle{amsplain}

\begin{thebibliography}{9}
\bibitem{ANV} R. Aimino, M. Nicol, S. Vaienti, {\em Annealed and quenched limit theorems for random expanding dynamical systems}, Probability Theory and Related Fields,  {\bf 162},  pp 233-274, (2015)
\bibitem{LA} L. Arnold, \emph{Random Dynamical Systems}, Springer Monographs in Mathematics, (1998).
\bibitem{Buzzi} J. Buzzi,  \emph{Exponential Decay of Correlations for Random Lasota--Yorke Maps}, Comm. Math. Phys. \textbf{208} (1999), 25--54.
\bibitem{Buzzi00}
J.~Buzzi, \emph{Absolutely continuous {S}.{R}.{B}. measures for random
  {L}asota-{Y}orke maps},  Trans. Amer. Math. Soc. \textbf{352} (2000), 3289--3303.
  \bibitem{CDKM} C. Cuny, J. Dedecker, A. Korepanov, F. Merlev\`ede, {\em Rates in almost sure invariance principle for slowly mixing dynamical systems},  arXiv:1801.05335.
\bibitem{CR} J. P. Conze and A. Raugi, \emph{Limit theorems for sequential expanding dynamical systems on $[0, 1]$}, In: Ergodic Theory and Related Fields.,  Contemp. Math. \textbf{430}
(2007), 89--121.
\bibitem{CM} C. Cuny and F. Merlev\`{e}de, \emph{Strong invariance principles with rate for "reverse" martingales and applications}, J. Theoret. Probab. \textbf{28} (2015), 137--183.

\bibitem{FieldMelbourneTorok}
M. Field, I. Melbourne and A. T\"or\"ok,
\emph{Decay of correlations, central limit theorems and approximation by
   Brownian motion for compact Lie group extensions},
   Ergodic Theory Dynam. Systems,
   {\bf 23}   (2003), 87--110.

\bibitem{GO} S. Gou\"ezel, \emph{Almost sure invariance principle for dynamical systems by spectral methods},  Annals of Probability  \textbf{38}  (2010), 1639--1671.
\bibitem{GOT}
C. Gupta, W. Ott,  and A. T\"or\"ok,
\emph{Memory loss for time-dependent piecewise expanding systems in higher dimension},
Math. Res. Lett. \textbf{20} (2013), 141--161.
\bibitem{HNTV} N. Haydn, M. Nicol, A. T\"or\"ok and S. Vaienti, \emph{Almost sure invariance principle for sequential and non-stationary dynamical systems}, Trans. Amer Math. Soc., to appear.
\bibitem{keller85} G. Keller, {\em Generalized bounded variation and applications to piecewise
monotonic transformations}, Z. Wahr. verw. Geb., {\bf 69} (1985), 461-478
\bibitem{kifer} Y. Kifer, \emph{Limit theorems for random transformations and processes in random environments}, Trans. Amer. Math. Soc. \textbf{350} (1998), 1481--1518.
    \bibitem{KO2} A. Korepanov, {\em Equidistribution for nonuniformly expanding dynamical systems},
January 2017. To appear in Communications in Mathematical Physics. arXiv:1701.03652
\bibitem{Korepanov}
A. Korepanov,
\emph{Rates in almost sure invariance principle for Young towers with exponential tails},
  ArXiv e-print 1703.09176 (2017).
\bibitem{LasotaYorke}
A. Lasota and J.A. Yorke, \emph{On the existence of invariant measures for piecewise monotonic
              transformations}, Trans. Amer. Math. Soc., \textbf{186}, (1973), 481--488.
\bibitem{Liverani} C. Liverani, \emph{Decay of correlations for piecewise expanding maps},
J. Statist. Phys. \textbf{78} (1995), no. 3-4, 1111--1129.
\bibitem{MN1} I. Melbourne and M. Nicol,{\em  A vector-valued almost sure invariance principle for hyperbolic dynamical systems}, Annals of Probability {\bf 37}, 478-505, (2009).
    \bibitem{MN2} I. Melbourne and M. Nicol, {\em Almost sure invariance principle for nonuniformly hyperbolic systems}, Commun. Math. Phys. {\bf 260}, 131-146, (2005)
\bibitem{PS} W. Philip and  W.F. Stout, \emph{Almost sure invariance principle for sums of weakly dependent random variables},  Memoirs of Amer. Math. Soc. \textbf{161}, Amer. Math. Soc., Providence, (1975).
\bibitem{Saussol}B. Saussol,  \emph{Absolutely continuous invariant measures for multidimensional expanding maps}, Israel J. Math. \textbf{116} (2000), 223-248.
\bibitem{SP} V. G. Sprindzuk, {\em Metric theory of
    Diophantine approximations}, V. H. Winston and Sons, Washington, D.C.,
  1979, Translated from the Russian and edited by Richard A. Silverman,
  With a foreword by Donald J. Newman, Scripta Series in Mathematics.


\end{thebibliography}

\end{document}